\definecolor{almond}{rgb}{0.94, 0.87, 0.8}
\newcommand{\R}{{\mathbb R}}       
\newcommand{\E}{{\mathbb E}}
\newcommand{\Z}{{\mathbb Z}}       
\newcommand{\DD}{{\mathcal D}}
\newcommand{\HH}{{\mathcal H}}
\newcommand{\II}{{\mathcal I}}
\newcommand{\WW}{{\mathcal W}}
\newcommand{\GZ}{{\mathcal G}}
\newcommand{\TT}{{\mathcal T}}
\newcommand{\EE}{{\mathcal E}}
\newcommand{\CC}{{\mathcal C}}
\newcommand{\diam}{{\rm diam}}
\newcommand{\dist}{{\rm dist}}
\newcommand{\fiproof}{{\hspace*{\fill} $\square$ \vspace{2pt}}}
\newcommand{\rf}[1]{{(\ref{#1})}}
\newcommand{\vphi}{{\varphi}}
\newcommand{\ve}{{\varepsilon}}
\newcommand{\vv}{{\vspace{2mm}}}
\newcommand{\vvv}{{\vspace{3mm}}}
\newcommand{\wt}[1]{{\widetilde{#1}}}
\def\Xint#1{\mathchoice
{\XXint\displaystyle\textstyle{#1}}%
{\XXint\textstyle\scriptstyle{#1}}%
{\XXint\scriptstyle\scriptscriptstyle{#1}}%
{\XXint\scriptscriptstyle\scriptscriptstyle{#1}}%
\!\int}
\def\XXint#1#2#3{{\setbox0=\hbox{$#1{#2#3}{\int}$ }
\vcenter{\hbox{$#2#3$ }}\kern-.58\wd0}}
\def\avint{\;\Xint-}
\definecolor{ffffff}{rgb}{1.0,1.0,1.0}
\definecolor{qqqqff}{rgb}{0.0,0.0,1.0}
\definecolor{ffqqqq}{rgb}{1.0,0.0,0.0}
\definecolor{zzzzqq}{rgb}{0.6,0.6,0.0}
\definecolor{marronet}{rgb}{0.6,0.2,0}
\definecolor{negre}{rgb}{0,0,0}
\definecolor{vermell}{rgb}{0.8,0.05,0.05}
\definecolor{blau}{rgb}{0.3,0.2,1.}
\definecolor{blauclar}{rgb}{0.,0.,1.}
\definecolor{grisfosc}{rgb}{0.25098039215686274,0.25098039215686274,0.25098039215686274}
\definecolor{verd}{rgb}{0.1,0.6,0.1}
\definecolor{taronja}{rgb}{0.9,0.6,0.05}
\definecolor{vermellclar}{rgb}{1.,0.,0.}
\definecolor{verdet}{rgb}{0,0.8,0.1}
\definecolor{blauverd}{rgb}{0,0.4,0.2}
\definecolor{grisclar}{rgb}{0.6274509803921569,0.6274509803921569,0.6274509803921569}
\newtheorem{theorem}{Theorem}[section]
\newtheorem{lemma}[theorem]{Lemma}
\newtheorem{keylemma}[theorem]{Key Lemma}
\newtheorem{coro}[theorem]{Corollary}
\newtheorem*{claim*}{Claim}
\newtheorem*{conjecture}{Conjecture}
\newtheorem*{theorem*}{Theorem}
\theoremstyle{definition}
\theoremstyle{remark}
\newtheorem{rem}[theorem]{\bf Remark}
\numberwithin{equation}{section}
\newcommand{\brem}{\begin{rem}}
\newcommand{\erem}{\end{rem}}
\begin{document}

\title[Unique continuation at the boundary]{Unique continuation at the boundary for harmonic functions in $C^1$ domains and Lipschitz domains with small constant}

\author{Xavier Tolsa}

\address{Xavier Tolsa \\
ICREA, Passeig Llu\'{\i}s Companys 23 08010 Barcelona, Catalonia;  Departament de Matem\`atiques, 
Universitat Aut\`onoma de Barcelona, 08193 Bellaterra, Catalonia; and Centre de Recerca Matem\`atica,
08193 Bellaterra, Catalonia.
}

\keywords{Harmonic function, unique continuation, frequency function}

\thanks{Supported by 2017-SGR-0395 (Catalonia) and MTM-2016-77635-P (MINECO, Spain).}

\subjclass{ 31B05, 31B20} 

\begin{abstract}
Let $\Omega\subset\R^n$ be a $C^1$ domain, or more generally, a 
Lipschitz domain with small local Lipschitz constant. In this paper it is shown that if $u$ is a
function  harmonic in $\Omega$ and continuous in $\overline \Omega$ 
which vanishes in a relatively open subset $\Sigma\subset\partial\Omega$ and moreover
the normal derivative $\partial_\nu u$ vanishes in a subset of $\Sigma$ with positive surface
measure, then $u$ is identically zero.
\end{abstract}

\maketitle

\section{Introduction}

In $\R^n$, with $n\geq3$, there are examples of harmonic functions in the half-space $\R^{n}_+$, 
$C^1$ up to the boundary, such that the function and its normal derivative vanish simultaneously on a 
set of positive measure of $\partial \R^{n}_+$. This was shown by Bourgain and Wolff in \cite{BW}.
The same result was generalized later to arbitrary $C^{1,\alpha}$ domains by Wang \cite{Wang}.
A related conjecture which is still open is the following:

\begin{conjecture}
Let $\Omega\subset\R^n$ be a Lipschitz domain and let $\Sigma\subset \partial\Omega$ be relatively open with respect to $\partial\Omega$. Let $u$ be a function harmonic in $\Omega$ and continuous in $\overline\Omega$. Suppose that $u$  vanishes in $\Sigma$ and the normal derivative $\partial_\nu u$ vanishes
in a subset of $\Sigma$ with positive surface measure. Then $u\equiv0$ in $\overline\Omega$.
\end{conjecture}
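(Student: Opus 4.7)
\medskip

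\noindent\textbf{Proof proposal.}
The plan is to adapt the frequency-function/blow-up strategy that succeeds for $C^1$ and small-constant Lipschitz domains to the general Lipschitz setting. Assume for contradiction that $u\not\equiv 0$. Let $E\subset\Sigma$ denote the set where $\partial_\nu u$ vanishes, with positive surface measure, and pick a density point $x_0\in E$ of $E$ relative to $\HH^{n-1}|_{\partial\Omega}$. Then $u(x_0)=0$ and $\partial_\nu u(x_0)=0$, and the goal is to derive a contradiction between the density of $E$ at $x_0$, the Dirichlet condition $u|_\Sigma=0$, and the hypothesis $u\not\equiv 0$.

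The first ingredient would be an Almgren-type frequency function centered at $x_0$: setting $h(r)=\int_{\partial B_r(x_0)\cap\Omega} u^2\,d\sigma$ and $D(r)=\int_{B_r(x_0)\cap\Omega}|\nabla u|^2$, define $N(r)=rD(r)/h(r)$. Because $u=0$ on $\Sigma$, Green's identity gives $D(r)=\int_{\partial B_r(x_0)\cap\Omega} u\,\partial_r u\,d\sigma$ with no contribution from $\Sigma$, and $N'(r)$ reduces to a Cauchy--Schwarz remainder plus an error measuring the obliqueness of $x-x_0$ with respect to $\partial\Omega$. One would hope to establish approximate monotonicity of $N$, yielding a finite vanishing order $N_\infty=\lim_{r\to 0^+}N(r)$ and doubling estimates $h(2r)\lesssim h(r)$. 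The second ingredient is a blow-up: setting $u_r(x)=u(x_0+rx)/h(r)^{1/2}$ with appropriate dimensional normalization, one extracts a subsequential limit $u_0$, a non-trivial homogeneous harmonic function of degree $N_\infty$ on the tangent cone $K=\lim\tfrac{1}{r}(\Omega-x_0)$, vanishing on $\partial K$. The density hypothesis at $x_0$ should force $\partial_\nu u_0\equiv 0$ on a positive-surface-measure subset of $\partial K$, and the concluding rigidity step is to derive from these conditions that $u_0\equiv 0$, contradicting $\|u_0\|_{L^2(\partial B_1\cap K)}=1$.

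The main obstacle -- and the reason the conjecture remains open in full Lipschitz generality -- is twofold. First, on a Lipschitz graph $\{y=\varphi(x')\}$ with constant of order one, the outward normal makes a fixed positive angle with the radial direction $x-x_0$, so the error term in $N'(r)$ is of the same order as the main term; the frequency need not be monotone, or even bounded, and a substitute would require either Lipschitz-adapted Carleman estimates or a new spectral-gap-type argument to secure a blow-up limit. Second, tangent cones of a Lipschitz domain at density points are merely Lipschitz cones, not half-spaces, so the final rigidity statement -- that a homogeneous harmonic function on a Lipschitz cone $K$ vanishing on $\partial K$ together with $\partial_\nu u_0=0$ on a positive-measure subset of $\partial K$ must vanish identically -- is essentially a scale-invariant version of the original conjecture, threatening circularity. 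A plausible route is to expand $u_0$ in a basis of homogeneous harmonic functions associated with Dirichlet eigenfunctions on the spherical cap $K\cap\partial B_1$ and to exploit boundary Harnack estimates on Lipschitz domains to rule out $\partial_\nu u_0=0$ on positive-measure subsets of $\partial K$; turning this into a genuine contradiction is the key missing step, and seems to require geometric input beyond what is available purely from the Lipschitz graph structure.
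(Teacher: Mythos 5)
You have not proved the statement, and you essentially say so yourself. The statement you were given is the conjecture for \emph{general} Lipschitz domains, which the paper explicitly leaves open: the paper proves only Theorem \ref{teomain}, i.e.\ the case of Lipschitz graphs with slope at most a small dimensional constant $\tau_0$ (hence $C^1$ domains), and states the full Lipschitz case as a conjecture. Your proposal is a plan, and both of the obstacles you flag are genuine, fatal gaps as written. First, the monotonicity (or approximate monotonicity) of the Almgren frequency is exactly where the small-constant hypothesis enters: in the paper it is used only at \emph{interior} points $x$ with $\dist(x,\partial\Omega)\gtrsim\ell(R)$, where the sign condition $(y-x)\cdot\nu(y)\ge 0$ in Lemma \ref{lemderiv} can be verified precisely because $\tau_0$ is small relative to $T^{-1}/A$ (Remark \ref{rem1}); for a Lipschitz constant of order one the error term in $\partial_r F$ is of the same size as the main term and no substitute is supplied. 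Second, your blow-up endgame is circular: the rigidity statement you need on the tangent Lipschitz cone $K$ (a homogeneous harmonic function vanishing on $\partial K$ with $\partial_\nu u_0=0$ on a set of positive measure must vanish) is a scale-invariant instance of the very conjecture being proved, and you do not supply the "geometric input" you acknowledge is required. So the proposal contains no complete argument for the stated conjecture.

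It is also worth noting that your sketch diverges from how the paper handles the case it \emph{can} treat: the paper never takes blow-up limits, never shows the frequency is bounded at boundary points (it explicitly warns it cannot), and never identifies a vanishing order. Instead it runs a Logunov-style combinatorial argument (the Key Lemma \ref{keylemma}, based on quantitative Cauchy uniqueness, Theorem \ref{teoARRV}) showing that on each Whitney-type cube either the frequency drops by a factor $1/2$ on a fixed proportion $\delta_0$ of subcubes or grows by at most a factor $1+CA^{-1/2}$, then applies Etemadi's law of large numbers to conclude $\liminf_{r\to0} h(x,12r)/h(x,r)<\infty$ for $\sigma$-a.e.\ $x$, and finally invokes the Adolfsson--Escauriaza lemma (Lemma \ref{lemAE}) to rule out $\partial_\nu u=0$ on a set of positive measure. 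Even for the small-constant case, your route through a limiting frequency and homogeneous blow-ups would require strong monotonicity and compactness statements that the paper deliberately avoids because they are not available at the boundary in this low regularity.
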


Remark that the assumption that
$u$ vanishes continuously in $\Sigma$ implies that $\nabla u$ exists $\sigma$-a.e.\ as a non-tangential
limit in $\Sigma$, and moreover $\nabla u = (\partial_\nu u)\,\nu \in L^2_{loc}(\sigma|_\Sigma)$. Here
 $\sigma$ stands for the  $(n-1)$-dimensional surface measure and $\nu$ is the outer unit normal. See Appendix A for more details.
 
The preceding conjecture is an open problem which is already mentioned in Fang-Hua Lin's work \cite{Lin}. It was later stated explicitly as a conjecture in the works by Adolfsson, Escauriaza, and Kenig
\cite{AEK}, \cite{AE}\footnote{
For an accurate historical account, see the recent work \cite{Kenig-Zhao}.}.
 The conjecture is known to be true in the plane, and also in higher dimensions if one assumes the function $u$ to be positive. In the first case, this can be deduced from the subharmonicity of $\log|\nabla u|$, and in the second one it is possible to use standard techniques in connection with harmonic measure and the comparison principle. 

In this paper I show that the conjecture is true for Lipschitz domains with small local Lipschitz constant.
The precise result is the following:

\begin{theorem}\label{teomain}
Let $\Omega\subset\R^n$ be a Lipschitz domain, let $B$ be a ball centered in $\partial\Omega$,
and suppose that $\Sigma = B\cap \partial\Omega$ is a Lipschitz graph with slope at most $\tau_0$,
where $\tau_0$ is some positive small enough constant depending only on $n$. 
Let $u$ be a function harmonic in $\Omega$ and continuous in $\overline\Omega$. Suppose that $u$  vanishes in $\Sigma$ and the normal derivative $\partial_\nu u$ vanishes
in a subset of $\Sigma$ with positive surface measure. Then $u\equiv0$ in $\overline\Omega$.
\end{theorem}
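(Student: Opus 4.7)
The plan is to argue by contradiction and reduce matters to an Almgren-type monotonicity statement near a well-chosen boundary point. Assume $u\not\equiv 0$. Let $F\subset \Sigma$ denote the subset of positive surface measure on which $\partial_\nu u$ vanishes, fix a point $x_0\in F$ of density one for $F$ with respect to $\sigma|_\Sigma$, and normalize so that $x_0=0$ and the approximate tangent plane of $\Sigma$ at $0$ is $\{x_n=0\}$. Because $u$ vanishes continuously on $\Sigma$, one has $\nabla u=(\partial_\nu u)\,\nu$ on $\Sigma$ (cf.\ Appendix~A), so at $0$ both $u$ and $\nabla u$ vanish on the high-density set $F$. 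Combined with a blow-up argument, this will force the leading homogeneous harmonic polynomial of $u$ at any finite order to be zero: such a polynomial would have to vanish together with its normal derivative on $\{x_n=0\}$, and iteration of harmonicity shows that all its pure $x_n$-derivatives then vanish on $\{x_n=0\}$, forcing the polynomial itself to vanish. Hence $u$ must vanish to infinite order at $0$ in the integral sense $\int_{B(0,r)\cap\Omega}|u|^2=o(r^N)$ for every $N$.

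To contradict this infinite-order vanishing I flatten $\Sigma$ near $0$ by a bilipschitz map $\Phi$ sending $B(0,r_0)\cap \overline\Omega$ onto a half-ball $B^+\subset\{x_n\ge 0\}$ with $\Phi(\Sigma)\subset\{x_n=0\}$. Since the graph slope of $\Sigma$ is at most $\tau_0$, one can arrange $\|D\Phi-\operatorname{Id}\|_\infty=O(\tau_0)$; the pushforward $v=u\circ \Phi^{-1}$ then satisfies $\operatorname{div}(A\nabla v)=0$ in $B^+$ with $A$ symmetric, uniformly elliptic, and $\|A-\operatorname{Id}\|_\infty=O(\tau_0)$, together with $v=0$ on $\{x_n=0\}\cap B$. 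Using the Dirichlet datum I extend $v$ to all of $B$ by odd reflection in $x_n$, obtaining a function $V$ that satisfies an analogous divergence-form equation with a bounded elliptic matrix still $O(\tau_0)$-close to the identity (the mixed entries $a_{in}$, $i<n$, flip sign upon reflection, but the pointwise norm of $A-\operatorname{Id}$ is preserved).

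To $V$ I attach an Almgren-type frequency function
\begin{equation*}
N(r)=\frac{r\int_{B_r}\langle A\nabla V,\nabla V\rangle\,dx}{\int_{\partial B_r} V^2\,\mu\,d\sigma},
\end{equation*}
where $\mu$ is a conformal weight chosen so that the radial vector field is an eigenvector of $A$ at the origin, and aim to prove an almost-monotonicity estimate of the form $(\log N(r))'\ge -C\tau_0/r$ on a sufficiently small interval $(0,r_0)$. Integrating this inequality yields a uniform bound $N(r)\le C_0$, and a standard argument then gives the doubling estimate $\int_{\partial B_{2r}}V^2\,d\sigma\le C\int_{\partial B_r}V^2\,d\sigma$, which is incompatible with the infinite-order vanishing of $V$ at $0$ established in the first step. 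The main obstacle is precisely this almost-monotonicity: in the $C^{1,\alpha}$ setting of Adolfsson--Escauriaza one can differentiate $A$ pointwise in the Rellich--Pohozaev identity for $N(r)$, whereas here $A$ is merely bounded, so one must argue via a distributional Pohozaev identity with multiplier $\langle x,\nabla V\rangle$, in which every error term has to be bounded by a quantity proportional to $\tau_0$ times the main contributions to $N'(r)$. The smallness of $\tau_0$ in the hypothesis is exactly what permits these errors to be absorbed and the monotonicity to close.
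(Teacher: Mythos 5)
You have identified the right target -- contradicting the infinite-order vanishing of Adolfsson--Escauriaza at a density point of $E$ -- but the almost-monotonicity route you propose does not close. The estimate $(\log N(r))' \ge -C\tau_0/r$ does \emph{not} integrate to a uniform bound on $N$: it says $r^{C\tau_0}N(r)$ is nondecreasing, which yields only $N(r)\le N(r_0)(r_0/r)^{C\tau_0}$, and this is unbounded as $r\to 0$ no matter how small $\tau_0>0$ is. Feeding this back into $N(r)=r\,\partial_r\log H(r)$ gives
\[
H(r)\ \ge\ H(r_0)\,\exp\!\Big(-\tfrac{N(r_0)}{C\tau_0}\big[(r_0/r)^{C\tau_0}-1\big]\Big),
\]
a stretched-exponential lower bound that decays \emph{faster than every polynomial} as $r\to 0$, so it is perfectly consistent with $H(r)=o(r^N)$ for all $N$. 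There is no contradiction. This is not a technical annoyance: to convert almost-monotonicity into a frequency bound one needs the error to be integrable in $dr/r$ (the Dini condition), and the error $C\tau_0/r$ one inherits from a Lipschitz boundary is exactly the borderline non-integrable case. This is precisely why the Dini hypothesis appears in the previously known results (Adolfsson--Escauriaza, Kukavica--Nystr\"om) and why the small-Lipschitz and $C^1$ cases were open. A smaller constant $\tau_0$ lowers the exponent $C\tau_0$ but never makes the power vanish; and for genuinely $C^1$ (non-Dini) domains the effective $\tau_0(r)\to 0$ as $r\to 0$ still leaves $\int_0 \tau_0(r)\,dr/r$ divergent.

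The paper sidesteps this obstruction entirely: it never attempts almost-monotonicity at a boundary point. Instead it uses \emph{exact} monotonicity of the frequency, valid for interior points $x$ and radii $r$ with $\dist(x,\partial\Omega)\gtrsim \tau_0 r$ (Remark 3.4), and proves a combinatorial Key Lemma showing that whenever the frequency is large on a Whitney cube, it halves on a definite fraction of descendant cubes $K$ generations down. A law-of-large-numbers argument (Etemadi) then gives, for $\sigma$-a.e.\ boundary point, a sequence of scales $r_j\to 0$ along which $h(x,12r_j)/h(x,r_j)$ stays bounded -- a statement much weaker than uniform doubling (which, as the paper emphasizes, the method does \emph{not} prove and which may well fail here) but still enough to contradict infinite-order vanishing at a density point of $E$. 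Separately, your ``blow-up to a homogeneous harmonic polynomial'' heuristic for the infinite-order vanishing is not self-contained either: without an a priori doubling bound there is no compactness to extract a blow-up limit, and the actual proof of the Adolfsson--Escauriaza lemma goes through a quantitative Cauchy uniqueness / three-spheres argument (cf.\ Appendix~C of the paper), not a blow-up.
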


As an immediate corollary, it follows that the above conjecture holds for $C^1$ domains.

Remark that, up now, the result stated in Theorem \ref{teomain} (and in the conjecture) was only known in the
case of Dini domains (i.e., Lipschitz domains whose outer normal is Dini continuous), by results of
 Adolfsson and Escauriaza \cite{AE} and Kukavica and Nystr\"om \cite{KN}, and also in the
case of convex Lipschitz domains, by Adolfsson, Escauriaza, and Kenig \cite{AEK}. 
Previously, the case of $C^{1,1}$ domains had been solved by F.-H. Lin \cite{Lin}. See also \cite{MC}
for a recent contribution in the particular case of convex domains where the 
recent geometric techniques introduced by Naber and Valtorta \cite{NV} are applied to study the strata of the set where $\partial_\nu u$ vanishes.

The proof of Theorem \ref{teomain} is based on the study of the doubling properties of $L^2$ averages of the harmonic
function $u$ by means of the so called Almgren's frequency function,
analogously to the  works mentioned in the previous paragraph.  The strategy in this paper 
consists in studying the behavior of the frequency 
function at points in $\Omega$ approaching the boundary. This strategy
is closer to the one of 
Kukavica and Nystr\"om in \cite{KN} than to the one of Adolfsson and Escauriaza \cite{AE}, which is
based on the use of a clever change of variables that transforms the Laplace equation
into an elliptic PDE in divergence form with non-constant coefficients and improves the domain, in a sense.

The main novelty in the arguments to prove Theorem \ref{teomain} is the application of some combinatorial techniques developed by Logunov and Malinnikova
in the works \cite{Logunov1}, \cite{Logunov2}, \cite{LM} in connection with the nodal sets of harmonic functions and
the Nadirashvili and Yau conjectures. In particular, one of the main technical results in this paper,
the Key Lemma \ref{keylemma} uses some ideas inspired by \cite{Logunov1} to bound the set where the
frequency function is large.
With the Key Lemma \ref{keylemma} in hand, in the last section of the paper a probabilistic argument is used
to show that
$$\liminf_{r\to0} \frac{\int_{\partial B(x,12r)} u^2\,d\sigma}{\int_{\partial B(x,r)} u^2\,d\sigma} < \infty$$
for almost all points  $x\in\Sigma$. By a lemma due to Adolfsson and Escauriaza \cite[Lemma 0.2]{AE},
this suffices to show that $\partial_\nu u$ cannot vanish
in a subset of $\Sigma$ with positive measure.

In case that $\Omega$ is a Dini domain, in \cite{AE} and \cite{KN} it is also proven that if $u$ is harmonic in $\Omega$ and vanishes continuously 
in $\Sigma$ (where $\Sigma$ is as in Theorem \ref{teomain}), then $|\partial_\nu u| $ is a local $B_2$ weight
in $\Sigma$ with respect to surface measure (i.e., it satisfies a local reverse H\"older inequality with exponent $2$). This follows
from the local uniform bound of Almgren's frequency function proven in \cite{AE} and \cite{KN}, which in turn implies 
a local uniform doubling condition for $L^2$ averages of the function $u$ on surface balls. Then, as
shown in \cite[Theorem 1]{AEK}, this doubling condition ensures that $|\partial_\nu u| $ is a local $B_2$ weight. In the case of Lipschitz domains with small constant, the proof of Theorem \ref{teomain} in this paper does
not ensure that the frequency function is locally uniformly bounded (or even pointwise bounded!) in $\Sigma$, and
thus one cannot deduce that $|\partial_\nu u| $ is a local $B_2$ weight.

In a similar vein, under the Dini assumption, in \cite{AE} it is shown that the dimension of the set
where $\partial_\nu u$ vanishes in $\Sigma$ has dimension at most $n-2$. This follows by arguments developed
previously in \cite{Lin} in the case of $C^{1,1}$ domains, which are based on the monotonicity of the 
frequency function in $\Sigma$. For $C^1$ domains or Lipschitz domains with small constant one cannot
derive any bound on the Hausdorff dimension 
smaller than $n-1$ from the arguments in this paper, as far as I know.

Finally it is worth mentioning a corollary about harmonic measure that follows easily from Theorem \ref{teomain}:

\begin{coro}
Let $\Omega\subset\R^n$ be a Lipschitz domain, let $B$ be a ball centered in $\partial\Omega$, and suppose that $\Sigma=B\cap \partial\Omega$
is a Lipschitz graph with small enough slope.
Let $\omega^p$, $\omega^q$ be the harmonic measures for $\Omega$ with respective poles in $p,q\in\Omega$.
Suppose that there exists some subset $E\subset\Sigma$ with positive harmonic measure such that
$$\omega^p|_E = \omega^q|_E$$
Then \,
$p = q.$
\end{coro}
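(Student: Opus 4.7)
\smallskip
\noindent\textbf{Proof sketch for the Corollary.}
The strategy is proof by contradiction. Assume $p\neq q$ and set
$$u(x) := G(x,p) - G(x,q),$$
where $G$ is the Green function of $\Omega$. Then $u$ is harmonic in $\Omega\setminus\{p,q\}$, continuous up to $\partial\Omega$, and vanishes on $\partial\Omega$, and in particular on $\Sigma$. Since $\Sigma$ is a Lipschitz graph with small enough slope, standard potential theory for Lipschitz domains (Dahlberg, Jerison--Kenig) shows that $\omega^p$ and $\omega^q$ are mutually absolutely continuous with respect to $\sigma$ on $\Sigma$, with densities $-\partial_\nu G(\cdot,p)$ and $-\partial_\nu G(\cdot,q)$ respectively. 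In particular $\sigma(E)>0$, and the identity $\omega^p|_E=\omega^q|_E$ yields $\partial_\nu u = 0$ at $\sigma$-almost every point of $E$.

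To apply Theorem \ref{teomain}, I would pick $r>0$ small enough that $\overline{B(p,r)}$ and $\overline{B(q,r)}$ are pairwise disjoint, contained in $\Omega$, and disjoint from the ball $B$ that defines $\Sigma$. Then
$$\Omega' := \Omega\setminus\bigl(\overline{B(p,r)}\cup\overline{B(q,r)}\bigr)$$
is a Lipschitz domain whose boundary still contains $\Sigma$, realized as a Lipschitz graph in $B$ of the same small slope, and $u$ is harmonic in $\Omega'$ and continuous up to $\overline{\Omega'}$. By construction $u$ vanishes on $\Sigma$ and $\partial_\nu u = 0$ on $E\subset\Sigma$ with $\sigma(E)>0$, so Theorem \ref{teomain} applied to $\Omega'$ forces $u\equiv 0$ on $\Omega'$.

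Finally, $u$ is harmonic on the connected open set $\Omega\setminus\{p,q\}$ and vanishes on the nonempty open subset $\Omega'$; by real-analyticity $u\equiv 0$ on all of $\Omega\setminus\{p,q\}$. This contradicts the singular behavior $G(x,p)\to+\infty$ as $x\to p$, while $G(x,q)$ stays bounded in a neighborhood of $p$. Therefore $p=q$.

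The only delicate step is identifying $d\omega^p/d\sigma$ with $-\partial_\nu G(\cdot,p)$ along $\Sigma$ and translating the equality $\omega^p|_E=\omega^q|_E$ into $\partial_\nu u = 0$ $\sigma$-a.e.\ on $E$; this uses the small Lipschitz constant of $\Sigma$ (under which harmonic measure is an $A_\infty$ weight with respect to $\sigma$) together with the nontangential-limit framework discussed in Appendix A. Everything else is essentially formal once Theorem \ref{teomain} is granted.
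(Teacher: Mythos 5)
Your proposal is correct and follows essentially the same route as the paper: form $u = G(\cdot,p) - G(\cdot,q)$, remove small closed balls around $p,q$ to get a Lipschitz domain to which Theorem \ref{teomain} applies, use Dahlberg's theorem to identify $d\omega^p/d\sigma = -\partial_\nu G(\cdot,p)$ so that $\omega^p|_E=\omega^q|_E$ gives $\partial_\nu u = 0$ on $E$, conclude $u\equiv 0$, and derive a contradiction from the singularity of the Green function at $p$. The only cosmetic difference is that you invoke real analyticity to propagate $u\equiv 0$ from $\Omega'$ to $\Omega\setminus\{p,q\}$, whereas the paper phrases the final contradiction via letting the removed radius tend to zero; both are fine.
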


Remark that saying that $\omega^p|_E = \omega^q|_E$ is the same as saying that 
$$\omega^p(F) = \omega^q(F)\quad \mbox{ for all Borel sets $F\subset E$.}$$
 The corollary follows by applying the theorem to 
$$u=  g(\cdot,p) - g(\cdot,q)\quad \mbox{ in $\Omega\setminus (\overline B(p,\ve)\cup \overline B(q,\ve))$,}$$
where $g(\cdot,\cdot)$ is the Green function of $\Omega$ and $\ve>0$ is small enough so that
$B(p,2\ve)\cup B(q,2\ve)\subset \Omega$. Observe that $u$ is harmonic in the Lipschitz domain
$\Omega_\ve:=\Omega\setminus (\overline B(p,\ve)\cup \overline B(q,\ve))$, it is continuous in $\overline{
\Omega_\ve}$, and it vanishes identically in $\partial\Omega \subset \partial\Omega_\ve$. Further, by Dahlberg's classical
theorem \cite{Dah}, it follows that the harmonic measures $\omega^p$ and $\omega^q$ are mutually absolutely 
continuous with the surface measure $\sigma$ on $\partial\Omega$, with 
$$\frac{d\omega^p}{d\sigma} = -\partial_\nu g(\cdot,p),\qquad \frac{d\omega^q}{d\sigma} = -\partial_\nu g(\cdot,q),$$
so that $\partial_\nu u$ vanishes in the whole $E$. So $\Omega_\ve$ and $u$ satisfy the assumptions
of Theorem \ref{teomain}, and thus $u\equiv 0$ in $\Omega_\ve$. This implies that $p=q$. Otherwise, letting $\ve\to 0$ we infer that $u(x)\to\infty$ as $x\to p$.

\vv

I would like to thank Luis Escauriaza and Albert Mas for useful discussions regarding the topic of the paper, and also Luis Escauriaza for
some suggestions that improved the readability of the paper.

\vv


\section{The frequency function}

As usual in harmonic analysis, in the whole paper, the letters $C,c$ are used to denote positive constants
which just depend on the dimension $n$ and whose values may change at different occurrences.
On the other hand, constants with subscripts, such as at $C_0$, retain their values in different occurrences. The notation $A\lesssim B$ is equivalent to $A\leq C \,B$, and $A\approx B$ is
equivalent to $A\lesssim B\lesssim A$.
\vv

In the whole paper, unless otherwise stated, we assume that $\Omega$ and $\Sigma$ are as in 
Theorem \ref{teomain}. 
We consider a function $u$ harmonic in $\Omega$ and continuous in $\overline \Omega$ which vanishes in $\Sigma$, and we assume that $u$ is not constant in $\Omega$. 
We extend $u$ by $0$
out of $\overline\Omega$, so that $u$ is continuous across $\Sigma$. 
Without loss of generality we assume that $\Sigma$ is a Lipschitz graph with respect the horizontal axes and that $\Omega\cap B$ lies above $\Sigma\cap B$. For $0<\ve\leq \frac12r(B)$, we denote $\Sigma_\ve = \Sigma + \ve\,e_n$
and $\Omega_\ve = \Omega + \ve\,e_n$,
where $e_n=(0,\ldots,0,1)$. 

For $x\in\R^n$ and $r>0$, we denote
$$h(x,r) = \frac1{\sigma(\partial B(x,r))} \int_{\partial B(x,r)} u^2\,d\sigma.$$
For a ball $B(x,r)$ which intersects $\Omega$, the Almgren frequency function (or just frequency function) associated with $u$ is defined by:
$$F(x,r) = 
r\,\partial_r \log h(x,r).$$

\begin{lemma}\label{lem-hxr}
Let $x\in\R^n$ and let $I\subset (0,\infty)$ be a closed bounded interval. Suppose that $B(x,r)\cap\Omega\neq\varnothing$ and $\bar B(x,r)\subset 2B$ for all $r\in I$ (where $B$ is as
Theorem \ref{teomain}). Then
$h(x,\cdot)$ is of class $C^1$ in $I$ and 
\begin{align}\label{eq1**}
\partial_r h(x,r) &= \frac2{\sigma(\partial B_r)} \int_{\partial B(x,r)} u(y)\,\nabla u(y)\cdot \frac{y-x}r\,d\sigma(y)= \frac2{\sigma(\partial B_r)} \int_{\partial B(x,r)\cap \Omega} u\,\partial_\nu  u\,d\sigma\\
& =\frac2{\sigma(\partial B_r)} \int_{B(x,r)\cap \Omega} |\nabla u|^2\,dy\qquad \mbox{ for all $r
\in I.$}\nonumber
\end{align}
Also,
\begin{equation}\label{eq2**}
F(x,r) = r\,\frac{\partial_r h(x,r)}{h(x,r)} =\frac{2 r\int_{B(x,r)} |\nabla u|^2\,dy}
{\int_{\partial B(x,r)} u^2\,d\sigma} \quad \mbox{ for all $r
\in I.$}
\end{equation}
\end{lemma}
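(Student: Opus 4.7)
The plan is to express $\partial_r h(x,r)$ in three equivalent forms by combining a change of variables to the unit sphere with the divergence theorem, and then to deduce both \eqref{eq2**} and the $C^1$ regularity of $h(x,\cdot)$ from continuity in $r$ of the final expression. First I would write
$$h(x,r)=\frac{1}{|S^{n-1}|}\int_{S^{n-1}} u(x+r\omega)^2\,d\omega,$$
where $u$ is extended by zero outside $\overline\Omega$. For each fixed $\omega$ the function $r\mapsto u(x+r\omega)^2$ is smooth on the open subset of $I$ where the ray lies in $\Omega$, identically zero on the open complement, and its one-sided derivatives match (both equal $0$) at the transition points, since $u$ vanishes continuously on $\Sigma$. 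Differentiation under the integral sign, justified by local boundedness of $u\,\nabla u$ up to $\Sigma$ (from standard boundary regularity of harmonic functions vanishing on a Lipschitz graph of small constant), together with reverting the change of variables, yields the first identity in \eqref{eq1**}.

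For the second identity I would note that $(y-x)/r$ is the outer unit normal $\nu$ to $\partial B(x,r)$ at $y$, so $\nabla u(y)\cdot (y-x)/r=\partial_\nu u(y)$; and the integrand vanishes on $\partial B(x,r)\setminus\overline\Omega$ because $u$ does, so the integral collapses to $\partial B(x,r)\cap\Omega$. The third identity comes from applying the divergence theorem to the vector field $u\,\nabla u$ on the Lipschitz domain $B(x,r)\cap\Omega$: its divergence equals $|\nabla u|^2+u\,\Delta u=|\nabla u|^2$ in $\Omega$, while $\partial(B(x,r)\cap\Omega)$ decomposes into the spherical cap $\partial B(x,r)\cap\Omega$, with outer unit normal $(y-x)/r$, and the boundary piece $B(x,r)\cap\partial\Omega \subset \Sigma$, on which $u\equiv 0$. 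The latter contributes nothing, yielding the third identity.

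The $C^1$ regularity of $h(x,\cdot)$ on $I$ then follows from the continuity in $r$ of this final expression: $r\mapsto \int_{B(x,r)\cap\Omega}|\nabla u|^2\,dy$ is continuous by absolute continuity of the integral (using local integrability of $|\nabla u|^2$ on $\overline{2B}\cap\overline\Omega$, which comes from Caccioppoli together with boundary regularity), and $r\mapsto \sigma(\partial B_r)^{-1}=c_n r^{-(n-1)}$ is smooth. The identity \eqref{eq2**} for $F(x,r)$ is then immediate from its definition. The main technical delicacy is the justification of the differentiation under the integral sign across those angles $\omega$ for which $x+r\omega\in\Sigma$, and the proper interpretation of the boundary trace of $\nabla u$ on $\Sigma$ in the divergence theorem; both rely on $u$ vanishing continuously on $\Sigma$ with enough gradient control at the boundary.
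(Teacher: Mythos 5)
Your proposal captures the right picture, but it has a genuine gap at the step on which the whole lemma hinges. You justify differentiation under the integral sign by invoking ``local boundedness of $u\,\nabla u$ up to $\Sigma$ (from standard boundary regularity of harmonic functions vanishing on a Lipschitz graph of small constant).'' This is not a standard fact, and in general it is false: a harmonic function that vanishes continuously on a Lipschitz (or even $C^1$) boundary portion need not have bounded gradient near that portion, and the product $u\,\nabla u$ need not be bounded either. The only boundary regularity available in the setting of this paper is what Theorem~\ref{teoap1} provides: the non-tangential limit $(\nabla u)_{+,a}$ exists $\sigma$-a.e., lies in $L^2_{loc}(\sigma|_\Sigma)$, and $\nabla u(\cdot+\ve e_n)\to \nabla u$ in $L^2_{loc}(\sigma|_\Sigma)$. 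That is an $L^2$ statement, not a pointwise bound, so it does not supply the dominating function you would need for differentiation under the integral over $S^{n-1}$. A similar issue arises when you apply the divergence theorem directly on the Lipschitz domain $B(x,r)\cap\Omega$ to the field $u\,\nabla u$: you silently assume that $u\,\nabla u$ has a well-defined, vanishing trace on $B(x,r)\cap\partial\Omega$, but $\nabla u$ may not be bounded there, and the divergence theorem for Lipschitz domains needs more than $W^{1,1}$-interior regularity of the vector field without additional control.

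The paper circumvents both problems by a different route. It first proves the \emph{third} identity by working on the translated domains $\Omega_\ve=\Omega+\ve e_n$, where $u$ is smooth, applying Green's theorem there, and passing to the limit $\ve\to0$ using the $L^2_{loc}(\sigma)$ convergence in \rf{eqnablaupart}; the boundary term over $B(x,r)\cap\partial\Omega_\ve$ is killed by combining uniform smallness of $u$ near $\Sigma$ with the $L^2$ control of $\nabla u(\cdot+\ve e_n)$. It then establishes the \emph{first} identity not by differentiating under the integral sign, but by integrating in $r$: it derives the integral identity \rf{eq*453} via the divergence theorem on the annulus $A(x,a,b)\cap\Omega_\ve$ (again letting $\ve\to 0$), and then invokes the fundamental theorem of calculus, using that the integrand is continuous in $r$ because it equals $\frac{2}{\sigma(\partial B_r)}\int_{B(x,r)\cap\Omega}|\nabla u|^2\,dy$. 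Your proof would become correct if you replaced the direct differentiation under the integral by this FTC argument and replaced the bare divergence theorem on $B(x,r)\cap\Omega$ by the $\Omega_\ve$-approximation argument, in both cases quoting the $L^2$ trace/convergence facts from Theorem~\ref{teoap1} rather than an unavailable pointwise bound on $\nabla u$.
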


Remark that, for $B(x,r)$ as in the lemma, we have
$$\int_{B(x,r)\cap \Omega} |\nabla u|^2\,dy<\infty.$$
Indeed, write $u=u^+ - u^-$ and, for any $\ve>0$, let $u^+_\ve=\max(u^+,\ve)-\ve$,
$u^-_\ve=\max(u^-,\ve)-\ve$, $v_\ve=u_\ve^+-u_\ve^-$. It is immediate to check that $u^+_\ve$ and $u_\ve^-$ belong to $W^{1,2}(B(x,r'))$ for some $r'>r$, and moreover they are subharmonic in
$B(x,r')$. As a consequence,
 by Caccioppoli's inequality,
$$\int_{B(x,r)} |\nabla u^\pm_\ve|^2\,dy \lesssim C(r,r')\int_{B(x,r')} |u^\pm_\ve|^2\,dy\leq C(r,r')\int_{B(x,r')} |u|^2\,dy.$$
Letting $\ve\to0$, we deduce that 
$$\int |\nabla u|^2\,dy<\infty.$$ 
Further, it follows easily that $v_\ve\to u$ in $W^{1,2}(B(x,r))$, and so $u\in W^{1,2}(B(x,r))$ too.

An immediate corollary of the lemma and, in particular, of the third identity in \rf{eq1**} is that $\partial_r h(x,r)\geq0$ and thus $h(x,r)$ is non-decreasing with respect to $r$. 

\begin{proof}[Proof of Lemma \ref{lem-hxr}]
The calculations in the lemma are quite straightforward and well-known in the case when $u$ is
sufficiently smooth up to boundary. In the general case when we only assume $u$ to be continuous
up to the boundary we have to be a little more careful and so we will show here the whole details.

Notice first that the second identity in \rf{eq1**} is immediate. Concerning the third one, for all $r\in I$ we have
$$\frac2{\sigma(\partial B_r)} \int_{\partial B(x,r)\cap \Omega} u\,\partial_\nu  u\,d\sigma
=\lim_{\ve \to 0} \frac2{\sigma(\partial B_r)} \int_{\partial (B(x,r)\cap \Omega_\ve)} u\,\partial_\nu  u\,d\sigma$$
because
$$\lim_{\ve \to 0} \frac2{\sigma(\partial B_r)} \int_{B(x,r)\cap \partial\Omega_\ve} u\,\partial_\nu  u\,d\sigma=0.$$
This follows easily from the fact that $u$ vanishes continuously in $\Sigma$ while 
\begin{equation}\label{eqnablaupart}
\lim_{\ve\to0} \nabla u(x+\ve e_n) \to \nabla u(x) \quad \mbox{ in $L^2_{loc}(\sigma|_\Sigma)$}
\end{equation}
with $\nabla u = (\partial_\nu u) \,\nu\in L^2_{loc}(\sigma|_\Sigma)$ defined as a non-tangential 
limit, as shown in Theorem~\ref{teoap1}.
Then, by Green's theorem, using that $u$ is $C^\infty$ in a neighborhood of $\Omega_\ve\cap B(x,r)$ for any $\ve>0$ sufficiently small and $r\in I$, we obtain
\begin{align*}
2\int_{\partial (B(x,r)\cap \Omega_\ve)} u\,\partial_\nu  u\,d\sigma &= \frac1{\sigma(\partial B_r)} \int_{\partial (B(x,r)\cap \Omega_\ve)} \partial_\nu  (u^2)\,d\sigma\\
& 
= \frac1{\sigma(\partial B_r)} \int_{B(x,r)\cap \Omega_\ve} \Delta (u^2)\,dy 
= \frac2{\sigma(\partial B_r)} \int_{B(x,r)\cap \Omega_\ve} |\nabla u|^2\,dy.
\end{align*}
So letting $\ve\to0$, taking into account that $u\in W^{1,2}_{loc}(B)$, 
the third identity in \rf{eq1**} follows.

To show the first identity in \rf{eq1**} observe that,
for all $[a,b]\subset I$, writing $\sigma(\partial B_r)=c_n\,r^{n-1}$,  we have
\begin{align*}
\int_a^b\frac2{\sigma(\partial B_r)} \int_{\partial B(x,r)} u(y)\,\nabla u(y)&\cdot \frac{y-x}r\,d\sigma(y)\,dr\\ & = 2c_n^{-1}\int_a^b \int_{\partial B(x,r)} u(y)\,\nabla u(y)\cdot \frac{y-x}{|y-x|^{n}}\,d\sigma(y)\,dr\\
& = 2c_n^{-1} \int_{A(x,a,b)\cap\Omega} u(y)\,\nabla u(y)\cdot \frac{y-x}{|y-x|^{n}}\,dy\\
& = c_n^{-1} \int_{A(x,a,b)\cap\Omega} {\rm div}_y\left(u(y)^2\frac{y-x}{|y-x|^{n}}\right)\,dy,
\end{align*}
where $A(x,a,b)$ stands for the open annulus centered at $x$ with inner radius $a$ and
outer radius $b$ .
Since $u\in W^{1,2}_{loc}(B)$ and it is smooth in a neighborhood of $B(x,b)\cap \Omega_\ve$, by the divergence theorem, we have
\begin{align*}
c_n^{-1}\int_{A(x,a,b)\cap\Omega} {\rm div}_y&\left(u(y)^2\frac{y-x}{|y-x|^{n}}\right)\,dy 
=c_n^{-1}\lim_{\ve\to0}\int_{A(x,a,b)\cap\Omega_\ve} 
{\rm div}_y\left(u(y)^2\frac{y-x}{|y-x|^{n}}\right)\,dy\\
& \qquad= \lim_{\ve\to0}\left(\frac1{\sigma(\partial B_b)}\int_{\partial B(x,b)\cap\Omega_\ve} u^2\,d\sigma
- \frac1{\sigma(\partial B_a)}\int_{\partial B(x,a)\cap\Omega_\ve} u^2\,d\sigma\right)\\
& \qquad\quad + \lim_{\ve\to0} c_n^{-1} \int_{A(x,a,b)\cap\partial\Omega_\ve} 
u(y)^2\,\frac{\nu(y)\cdot(y-x)}{|y-x|^n}\,d\sigma(y).
\end{align*}
Since $u$ vanishes continuously up to the boundary $\Sigma$, the last limit on the right hand side above vanishes.
Therefore,
\begin{equation}\label{eq*453}
\int_a^b\frac2{\sigma(\partial B_r)} \int_{\partial B(x,r)\cap\Omega} u(y)\,\nabla u(y)\cdot \frac{y-x}r\,d\sigma(y)\,dr = h(x,b)- h(x,a).
\end{equation}
On the other hand, we have already shown that
$$\frac2{\sigma(\partial B_r)} \int_{\partial B(x,r)\cap\Omega} u(y)\,\nabla u(y)\cdot \frac{y-x}r\,d\sigma(y)=\frac2{\sigma(\partial B_r)} \int_{B(x,r)\cap \Omega_\ve} |\nabla u|^2\,dy,$$
and so this term is continuous in $r$, as $u\in W^{1,2}_{loc}(B)$.
Then, the first identity in \rf{eq1**} follows from \rf{eq*453} and the fundamental theorem of calculus.

The identity \rf{eq2**} is an immediate consequence of the definition of $F(x,r)$ and \rf{eq1**}. 
\end{proof}

\vv

The following lemma is already known. It is essentially contained (but not stated in this way) in \cite{AEK}. For the reader's
convenience we include the detailed proof here.

\begin{lemma}\label{lemderiv}
Let $x\in\R^n$ and let $I\subset (0,\infty)$ be a closed bounded interval. Suppose that $B(x,r)\cap\Omega\neq\varnothing$ and $\bar B(x,r)\subset 2B$ for all $r\in I$ (where $B$ is as
Theorem \ref{teomain}). Then
$F(x,\cdot)$ is absolutely continuous in $I$ and, for a.e.\ $r\in I$,
\begin{align}\label{eqcalc}
\partial_rF(x,r) & =  
\frac{4r}{\HH(x,r)^2} \bigg(
\int_{\partial B(x,r)\cap\Omega} |u|^2\,d\sigma\,
 \int_{\partial B(x,r)\cap\Omega} \big|\partial_\nu u\big|^2\,d\sigma - \bigg(\int_{\partial B(x,r)} u\,\partial_\nu u\,d\sigma\bigg)^2\bigg)\\
 &\quad
+  \frac{2}{\HH(x,r)} \int_{B(x,r)\cap\partial\Omega} (y-x)\cdot \nu(y)\,\big|\partial_\nu u(y)\big|^2\,d\sigma(y),\nonumber
\end{align}
where
$$\HH(x,r) = \sigma(\partial B_r) \,h(x,r) = \int_{\partial B(x,r)} u^2\,d\sigma.$$
In particular, if $(y-x)\cdot \nu(y)\geq 0$ for $\sigma$-a.e.\ $y\in B(x,r)\cap\partial\Omega$, 
then $\partial_r F(x,r)\geq0.$
\end{lemma}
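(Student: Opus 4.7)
The plan is to carry out the standard Almgren-type computation. By \eqref{eq2**} one can write $F(x,r)=2rD(x,r)/\HH(x,r)$ with $D(x,r):=\int_{B(x,r)\cap\Omega}|\nabla u|^2\,dy$, so that formally
$$\partial_r \log F = \frac{1}{r}+\frac{\partial_r D}{D}-\frac{\partial_r \HH}{\HH}.$$
Lemma \ref{lem-hxr} immediately gives $\partial_r \HH(x,r)=\tfrac{n-1}{r}\HH(x,r)+2D(x,r)$, while the coarea formula yields $\partial_r D(x,r)=\int_{\partial B(x,r)\cap\Omega}|\nabla u|^2\,d\sigma$ for a.e.\ $r\in I$, which also shows that $D(x,\cdot)$ is absolutely continuous on $I$.

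The nontrivial ingredient is a Rellich--Pohozaev identity on $B(x,r)\cap\Omega$. The key pointwise calculation is that, for $u$ harmonic in $\Omega$, the vector field
$$V(y) := 2\bigl((y-x)\cdot\nabla u(y)\bigr)\nabla u(y)-(y-x)\,|\nabla u(y)|^2$$
satisfies $\operatorname{div} V=(2-n)|\nabla u|^2$. To turn this pointwise identity into an integral one under mere continuity of $u$ up to $\partial\Omega$, I would, as in the proof of Lemma \ref{lem-hxr}, apply the divergence theorem on the smoother approximating domain $B(x,r)\cap\Omega_\ve$ (with $\Omega_\ve=\Omega+\ve e_n$) and let $\ve\to 0$. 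On the spherical piece $\partial B(x,r)\cap\Omega_\ve$ one has $\nu=(y-x)/r$, so the integrand reduces to $r|\nabla u|^2-2r|\partial_\nu u|^2$, with $\partial_\nu u$ denoting the radial derivative. On the translated Lipschitz piece $B(x,r)\cap\partial\Omega_\ve$, the non-tangential limit of $\nabla u$ on $\Sigma$ is purely normal (since $u$ vanishes continuously there, its tangential derivatives vanish on $\Sigma$), so the integrand collapses in the limit to $-(y-x)\cdot\nu\,|\partial_\nu u|^2$, and the $L^2$ convergence needed to pass the boundary integral to the limit is given by \eqref{eqnablaupart} and Theorem \ref{teoap1}. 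The outcome is
\begin{align*}
r\int_{\partial B(x,r)\cap\Omega}|\nabla u|^2\,d\sigma &= (n-2)\,D(x,r) + 2r\int_{\partial B(x,r)\cap\Omega}|\partial_\nu u|^2\,d\sigma\\
&\quad + \int_{B(x,r)\cap\partial\Omega}(y-x)\cdot\nu\,|\partial_\nu u|^2\,d\sigma.
\end{align*}

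Plugging this expression for $\partial_r D$ back into the formula for $\partial_r \log F$, the curvature terms $\tfrac{1}{r}+\tfrac{n-2}{r}-\tfrac{n-1}{r}$ cancel, and after multiplication by $F=2rD/\HH$ one obtains
\begin{align*}
\partial_r F(x,r) &= \frac{4r}{\HH(x,r)^2}\bigg(\HH(x,r)\int_{\partial B(x,r)\cap\Omega}|\partial_\nu u|^2\,d\sigma-D(x,r)^2\bigg)\\
&\quad +\frac{2}{\HH(x,r)}\int_{B(x,r)\cap\partial\Omega}(y-x)\cdot\nu\,|\partial_\nu u|^2\,d\sigma.
\end{align*}
Rewriting $D(x,r)=\int_{\partial B(x,r)\cap\Omega}u\,\partial_\nu u\,d\sigma$ via the second identity in \eqref{eq1**}, and $\HH(x,r)=\int_{\partial B(x,r)\cap\Omega}u^2\,d\sigma$ (since $u\equiv 0$ outside $\Omega$), the first parenthesis becomes precisely the bracket in \eqref{eqcalc}. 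The sign statement is then immediate: the bracket is nonnegative by Cauchy--Schwarz, and the boundary term is nonnegative whenever $(y-x)\cdot\nu\ge 0$ on $B(x,r)\cap\partial\Omega$. Absolute continuity of $F(x,\cdot)$ on $I$ follows because $D(x,\cdot)$ and $\HH(x,\cdot)$ are absolutely continuous and $\HH(x,\cdot)>0$ is continuous, hence bounded below, on the compact interval $I$.

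The main obstacle will be the rigorous justification of the Rellich--Pohozaev identity up to the boundary $\Sigma$: the field $V$ is only $L^1_{\rm loc}$ near $\partial\Omega$, and the boundary trace of $\nabla u$ has just enough regularity---$L^2_{\rm loc}$, from Theorem \ref{teoap1}---to allow passage to the limit on the approximating domains $\Omega_\ve$. Extracting the precise boundary contribution $\int(y-x)\cdot\nu\,|\partial_\nu u|^2\,d\sigma$ requires keeping track of the tangential component of $\nabla u$ on $\Sigma_\ve$: although it need not vanish for $\ve>0$, it converges to $0$ in $L^2_{\rm loc}(\sigma|_\Sigma)$ as $\ve\to 0$ (since the non-tangential limit of $\nabla u$ on $\Sigma$ is purely normal), so its contributions to the cross-terms drop out in the limit.
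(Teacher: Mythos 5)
Your proof is correct and takes essentially the same route as the paper: the log-derivative/quotient-rule computation of $\partial_r F$ from $\partial_r\HH = \tfrac{n-1}{r}\HH+2D$ and the coarea formula for $\partial_r D$, combined with the Rellich--Necas identity for the vector field $V=2\bigl((y-x)\cdot\nabla u\bigr)\nabla u-(y-x)|\nabla u|^2$ integrated over $B(x,r)\cap\Omega_\ve$ and passed to the limit $\ve\to 0$ via Theorem \ref{teoap1}. The only differences are expository---you differentiate $\log F$ rather than $F$ directly, and your description of the boundary integrands appears to carry the sign of $-V\cdot\nu$ rather than $V\cdot\nu$---but the integrated Rellich identity you state and the resulting formula for $\partial_r F$ are correct and agree with the paper.
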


\begin{proof}
Denote
$$\II(x,r) = \int_{B(x,r)} |\nabla u|^2\,dy.$$
Since 
$$\II(x,r) = \int_0^r \int_{\partial B(x,r)}  |\nabla u|^2\,d\sigma(y)\,dt,$$
it follows that  $\II(x,\cdot)$ is absolutely continuous with respect to $r$. As $\HH(x,\cdot)$
is of class $C^1$ and bounded away from $0$ in $I$, we deduce that $F(x,\cdot)$
is also absolutely continuous in $I$.

By \rf{eq1**}, we have
$$
\partial_r F(x,r) = \partial_r \frac{2r\, \II(x,r)}{\HH(x,r)} = 2\,\frac{(\II(x,r) + r\,\II'(x,r))\,\HH(x,r) - r\,\II(x,r)\,\HH'(x,r)}{\HH(x,r)^2},
$$
where the symbol $'$ denotes the derivative with respect to $r$.
Observe that, by \rf{eq1**}, for a.e.\ $r\in I$, 
\begin{align*}
\HH'(x,r) & = \partial_r\sigma(\partial B_r)\,h(x,r) + \sigma(\partial B_r)\,h'(x,r) \\
& = \frac{(n-1)\sigma(\partial B_r)}r\,h(x,r)+
 2\,\II(x,r) = \frac{(n-1)}r\,\HH(x,r) + 2\,\II(x,r).
\end{align*}
Therefore,
\begin{equation}\label{eqderf}
\partial_r F(x,r) = \frac2{\HH(x,r)^2} \Big(r\,\HH(x,r)\,\II'(x,r) - (n-2)\HH(x,r)\,\II(x,r)- 2r\,\II(x,r)^2\Big).
\end{equation}

To calculate $\II'(x,r)$ we take into account that
\begin{align*}
\II'(x,r) & = \int_{\partial B(x,r)} |\nabla u|^2\,d\sigma\\
& = \int_{\partial (B(x,r)\cap \Omega)} \frac{y-x}r\cdot\nu(y)\,
|\nabla u(y)|^2\,d\sigma (y)- \int_{B(x,r)\cap \partial\Omega} \frac{y-x}r\cdot\nu(y)|\nabla u(y)|^2\,d\sigma(y).
\end{align*}
 By the Rellich-Necas identity with vector field $\beta(y) = y-x$, $y\in\Omega$, we have
$${\rm div}(\beta\,|\nabla u|^2)= 2\,{\rm div}((\beta\cdot\nabla u)\,\nabla u) + (n-2)\,|\nabla u|^2
\quad \mbox{ in $\Omega.$}
$$
Integrating in $B(x,r)\cap\Omega_\ve$ (with $\Omega_\ve$ as in the proof of Lemma \ref{lem-hxr}),
applying the divergence theorem in this domain, and then letting $\ve\to0$, taking into account \rf{eqnablaupart}, we derive
\begin{align*}
&\int_{\partial(B(x,r)\cap\Omega)} (y-x)\cdot\nu(y)\,|\nabla u(y)|^2\,d\sigma(y)\\
& \qquad=
2\int_{\partial(B(x,r)\cap\Omega)} (y-x)\cdot\nabla u(y)\,\partial_\nu u(y) \,d\sigma(y)+ (n-2)\,\II(x,r)\\
&\qquad = 2r\int_{\partial B(x,r)\cap\Omega} |\partial_\nu u|^2\,d\sigma 
+2\int_{B(x,r)\cap\partial\Omega} (y-x)\cdot\nu(y)\,|\partial_\nu u(y)|^2\,d\sigma(y)
+ (n-2)\,\II(x,r).
\end{align*}
Thus,
\begin{align*}
\II'(x,r) & 
 =2\! \int_{\partial B(x,r)\cap\Omega} \big|\partial_\nu u\big|^2\,d\sigma + \frac{n-2}r\,\II(x,r) + \frac1r \int_{B(x,r)\cap\partial\Omega} (y-x)\cdot \nu(y)\,\big|\partial_\nu u(y)\big|^2d\sigma(y).
\end{align*}

Plugging the last calculation for $\II'(x,r)$ into \rf{eqderf}, we obtain
\begin{align}\label{eqtd1}
\partial_r F(x,r)  =  
\frac{4r}{\HH(x,r)^2} &\bigg(
\HH(x,r) 
 \int_{\partial B(x,r)\cap\Omega} \big|\partial_\nu u\big|^2\,d\sigma \\
& +  \frac{\HH(x,r)}{2r} \int_{B(x,r)\cap\partial\Omega} (y-x)\cdot \nu(y)\,\big|\partial_\nu u(y)\big|^2\,d\sigma(y)
-  \II(x,r)^2\bigg).\nonumber
\end{align}
Observe now that $\II(x,r)$ can be written in the following way:
$$\II(x,r) = \frac12\int_{B(x,r)\cap\Omega} \Delta(u^2)\,dy = \frac12\int_{\partial (B(x,r)\cap\Omega)} \partial_\nu(u^2)\,d\sigma = \int_{\partial B(x,r)} u\,\partial_\nu u\,d\sigma.$$
Plugging the last identity into \rf{eqtd1}, we get \rf{eqcalc}.

The last assertion in the lemma follows from the fact that, by Cauchy-Schwarz,
$$\int_{\partial B(x,r)\cap\Omega} |u|^2\,d\sigma\,
 \int_{\partial B(x,r)\cap\Omega} \big|\partial_\nu u\big|^2\,d\sigma - \bigg(\int_{\partial B(x,r)} u\,\partial_\nu u\,d\sigma\bigg)^2\geq0,$$
and from
the condition that $(y-x)\cdot \nu(y)\geq 0$ for $\sigma$-a.e.\ $y\in B(x,r)\cap\partial\Omega$, which implies that
$$\int_{B(x,r)\cap\partial\Omega} (y-x)\cdot \nu(y)\,\big|\partial_\nu u(y)\big|^2\,d\sigma(y)\geq0.$$
\end{proof}

\vv
It is immediate to check that saying that $\partial_r F(x,r)\geq0$ a.e.\ in an interval is equivalent to saying that the function
$$f(t) = \log h\big(x,e^t\big)$$
is convex in $t=\log r$ for $r$ in that interval, i.e., $f''(\log r)\geq0$ a.e.\ in the interval.
\vv

\begin{lemma}\label{lemconvex}
Given $x\in \R^{n}$,
let $I\subset (0,\infty)$ be an interval such that $h(x,r)>0$ and $\partial_r F(x,r)\geq0$ for a.e.\ $r\in I$.
Given $a>1$, if both $r,ar\in I$, then
\begin{equation}\label{eqconvex}
F(x,r)\leq \log_a \frac{h(x,ar)}{h(x,r)}\leq F(x,ar).
\end{equation}
\end{lemma}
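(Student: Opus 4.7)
The plan is to pass to logarithmic scale and reduce the statement to the standard fact that for a convex function, the slope of any chord lies between the values of the derivative at the two endpoints.

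Concretely, I would set $t = \log r$ and define
$$f(t) = \log h\bigl(x,e^t\bigr),$$
which is well defined on $\log I := \{\log r : r \in I\}$ because $h(x,r) > 0$ there. By Lemma \ref{lem-hxr}, $h(x,\cdot)$ is of class $C^1$ on $I$, so $f$ is $C^1$ on $\log I$, and a direct computation together with the definition $F(x,r) = r\,\partial_r \log h(x,r)$ gives $f'(t) = F(x,e^t)$. By Lemma \ref{lemderiv}, $F(x,\cdot)$ is absolutely continuous on $I$, and by assumption $\partial_r F(x,r) \geq 0$ a.e., so $F(x,\cdot)$ is non-decreasing on $I$; consequently $f'$ is non-decreasing on $\log I$, i.e., $f$ is convex there.

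With $t_1 = \log r$ and $t_2 = \log(ar) = \log r + \log a$, the secant slope of $f$ between $t_1$ and $t_2$ is
$$\frac{f(t_2) - f(t_1)}{t_2 - t_1} \;=\; \frac{1}{\log a}\,\log \frac{h(x,ar)}{h(x,r)} \;=\; \log_a \frac{h(x,ar)}{h(x,r)}.$$
By convexity of $f$ on the interval $[t_1,t_2]$, this secant slope is sandwiched between $f'(t_1)$ and $f'(t_2)$, which translates exactly to
$$F(x,r) \leq \log_a \frac{h(x,ar)}{h(x,r)} \leq F(x,ar),$$
proving \rf{eqconvex}.

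There is no real obstacle: the one point that deserves a line of justification is why the a.e.\ inequality $\partial_r F(x,r) \geq 0$ yields monotonicity of $F$, and this is a direct consequence of the absolute continuity of $F(x,\cdot)$ on $I$ established in Lemma \ref{lemderiv}. Once monotonicity of $F$ is in hand, the equivalence between ``$\partial_r F(x,r) \geq 0$ a.e.'' and ``$f$ is convex'' noted just before the statement makes the two inequalities a purely one-variable consequence of convexity.
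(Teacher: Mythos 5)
Your proof is correct and follows essentially the same route as the paper: pass to $f(t) = \log h(x,e^t)$, observe $f'(t) = F(x,e^t)$, invoke convexity, and read off the chord-slope inequality. The paper compresses this into two lines by citing the convexity remark preceding the lemma; you add a worthwhile clarification by noting that the a.e.\ inequality $\partial_r F \geq 0$ together with the absolute continuity of $F(x,\cdot)$ from Lemma \ref{lemderiv} is what actually yields monotonicity of $f'$ and hence genuine convexity of $f$ (a.e.\ nonnegativity of $f''$ alone would not suffice without that regularity).
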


Another way of writing the preceding estimate is the following, for $R=ar$:
\begin{equation}\label{eqconvex2}
h(x,r)\,\left(\frac Rr\right)^{F(x,r)}\leq h(x,R)\leq h(x,r)\,\left(\frac Rr\right)^{F(x,R)}.
\end{equation}
\vv

\begin{proof}
By the convexity of the function $f$ defined above, we have
$$f'(\log r)\leq \frac{f(\log ar) - f(\log r)}{\log ar - \log r} \leq f'(\log ar).$$
It is immediate to check that this is equivalent to \rf{eqconvex}.
\end{proof}
\vv

From now on, we say that an interval  $I\subset (0,\infty)$ is admissible for $x\in\R^{n}$ if $h(x,r)>0$ and $\partial_r F(x,r)\geq0$ for a.e.\ $r\in I$.
\vv

\begin{lemma}\label{lemperturb}
Let $x,y\in\R^{n}$ and $r>0$, $\gamma\in(0,1/10)$, such that $|x-y|\leq\gamma r$. 
Let $I$ be an open interval admissible for $x$ and $y$ such that both $r,\;2(1+\gamma^{1/2})r\in I$.
Suppose that $B(x,5r)\cap\partial\Omega\subset\Sigma$.
Then
\begin{equation}\label{eqpp4}
F(y,r) \leq (1+C\gamma^{1/2}) \,F\big(x, 2(1+\gamma^{1/2})r\big) + C\gamma^{1/2},
\end{equation}
for some absolute constant $C>0$.
\end{lemma}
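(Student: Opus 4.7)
The plan is to combine the convexity bounds of Lemma \ref{lemconvex} at both centers $y$ and $x$, linked by comparisons between $h(y,\cdot)$ and $h(x,\cdot)$ obtained from thin-shell integrations that exploit $|x-y|\le\gamma r$. Writing $R=2(1+\gamma^{1/2})r$, the starting point is Lemma \ref{lemconvex} applied at $y$ with dilation factor $2$ (legitimate since $r,2r,R\in I$):
$$F(y,r)\le \log_2\frac{h(y,2r)}{h(y,r)}.$$
The task then reduces to sandwiching $h(y,2r)$ above by $(1+C\gamma^{1/2})h(x,R)$ and $h(y,r)$ below by $(1-C\gamma^{1/2})h(x,r_1)$ for some $r_1$ slightly less than $r$.

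For the upper bound on $h(y,2r)$, I will use that the map $\rho\mapsto\HH(z,\rho)$ is universally non-decreasing (by the third identity of Lemma \ref{lem-hxr}, which requires no admissibility). With shell width $\eta r$ to be chosen,
$$\eta r\,\HH(y,2r)\le \int_{B(y,(2+\eta)r)\setminus B(y,2r)}u^2\,dz \le (\eta+2\gamma)r\,\HH\bigl(x,(2+\eta+\gamma)r\bigr),$$
where the second inequality uses $|x-y|\le\gamma r$ to embed the $y$-annulus into $B(x,(2+\eta+\gamma)r)\setminus B(x,(2-\gamma)r)$ and then monotonicity of $\HH(x,\cdot)$. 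Choosing $\eta=2\gamma^{1/2}-\gamma$ makes $(2+\eta+\gamma)r=R$ exactly and $(\eta+2\gamma)/\eta=1+O(\gamma^{1/2})$; converting from $\HH$ to $h$ by the factor $(R/(2r))^{n-1}=(1+\gamma^{1/2})^{n-1}$ gives $h(y,2r)\le(1+C\gamma^{1/2})h(x,R)$. A symmetric argument with shell $B(y,r)\setminus B(y,(1-\gamma^{1/2})r)$ (contained in $B(x,(1-\gamma)r)\setminus B(x,(1-\gamma^{1/2}+\gamma)r)$, nonempty because $\gamma^{1/2}>2\gamma$ for small $\gamma$) produces the companion bound $h(y,r)\ge(1-C\gamma^{1/2})h(x,r_1)$ with $r_1=(1-\gamma^{1/2}+\gamma)r$.

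Combining the two sandwiching inequalities,
$$F(y,r)\le \log_2\frac{(1+C\gamma^{1/2})\,h(x,R)}{(1-C\gamma^{1/2})\,h(x,r_1)}\le C\gamma^{1/2}+\log_2\frac{h(x,R)}{h(x,r_1)}.$$
Applying Lemma \ref{lemconvex} now at $x$ on the interval $[r_1,R]$ yields $h(x,R)/h(x,r_1)\le(R/r_1)^{F(x,R)}$, and a direct computation gives $R/r_1=2(1+O(\gamma^{1/2}))$ and therefore $\log_2(R/r_1)=1+O(\gamma^{1/2})$. This produces the asserted bound $F(y,r)\le(1+C\gamma^{1/2})F(x,R)+C\gamma^{1/2}$.

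The main obstacle lies in the last step: applying Lemma \ref{lemconvex} at $x$ on $[r_1,R]$ requires $F(x,\cdot)$ to be admissible on a range extending slightly below $r$, while the hypothesis only asserts $r,R\in I$. This is exactly the role of the assumption $B(x,5r)\cap\partial\Omega\subset\Sigma$: since $\Omega$ lies above the small-slope Lipschitz graph $\Sigma$, we obtain $(z-x)\cdot\nu(z)\ge 0$ for every $z\in B(x,5r)\cap\partial\Omega$, and Lemma \ref{lemderiv} then guarantees $\partial_rF(x,\cdot)\ge 0$ on an interval comfortably larger than $[r_1,R]$, placing $r_1$ inside the admissible range. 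A secondary technical point is the careful calibration of the shell widths $\eta,\eta'$ to be of order $\gamma^{1/2}$ (neither $\gamma$ nor a fixed constant) so that the multiplicative slack factors $(\eta+2\gamma)/\eta$ and $\eta'/(\eta'-2\gamma)$ are $1+O(\gamma^{1/2})$, which is precisely what produces the asserted rate $\gamma^{1/2}$ in the conclusion.
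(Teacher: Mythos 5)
Your argument follows essentially the same route as the paper's proof: you bound $F(y,r)$ by $\log_2(h(y,2r)/h(y,r))$ via Lemma \ref{lemconvex}, transfer the ratio from $y$-averages to $x$-averages through thin-shell comparisons that exploit $|x-y|\le\gamma r$ and the monotonicity of $h$, and then apply Lemma \ref{lemconvex} again at $x$; the choice of shell width of order $\gamma^{1/2}r$ is precisely the same calibration the paper makes with $\delta=\gamma^{1/2}$. The only cosmetic difference is that you phrase the comparison in terms of $\HH$ and coarea integration where the paper uses volume averages over annuli, but these carry identical content.

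One point of your write-up is not quite right and deserves a flag. You claim that the hypothesis $B(x,5r)\cap\partial\Omega\subset\Sigma$ by itself gives $(z-x)\cdot\nu(z)\ge 0$ for every $z\in B(x,5r)\cap\partial\Omega$, and hence that admissibility extends below $r$. But the estimate behind Remark \ref{rem1} shows $(z-x)\cdot\nu(z)\ge (1-\tau_0)\,|x-x'| - 2\tau_0|z-x|$ with $x'$ the vertical projection of $x$ onto $\Sigma$, so the sign is nonnegative only when $\dist(x,\partial\Omega)\gtrsim\tau_0\,r$; for $x$ close to the graph relative to $\tau_0 r$, the quantity can be negative. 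So admissibility on $[(1-\gamma^{1/2}+\gamma)r, R]$ does not follow from the graph condition alone. The paper itself sidesteps this by simply applying Lemma \ref{lemconvex} without re-checking that the smaller radius $(1-\delta)r$ lies in $I$; in all actual uses, $I$ is taken to be of the form $(0,A\ell(R))$ as supplied by Remark \ref{rem1}, so the point is moot. But as a stated justification, yours is incorrect and you should instead either assume $I$ contains the slightly larger range or invoke the quantitative version of Remark \ref{rem1} that already carries the distance hypothesis on $x$.
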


\begin{proof}
Let $x,y,r,\gamma$ be as above and let $\delta\in (0,1)$ to be chosen below. Since $h(y,\cdot)$ is non-decreasing, we deduce that
$$h(y,r)= \avint_{\partial B(y,r)} u^2\,d\sigma\leq \avint_{A(y,r,(1+\delta) r)} u^2\,dm.$$
Analogously,
$$h(y,r)\geq  \avint_{A(y,(1-\delta) r,r)} u^2\,dm.$$
The same estimates are valid interchanging $y$ by $x$ and/or $r$ by $2r$.
Then, by \rf{eqconvex}, we have
$$F(y,r) \leq \log_2\frac{h(y,2r)}{h(y,r)} \leq 
\log_2\frac{\avint_{A(y,2r,(2+\delta) r)} u^2\,dm}{\avint_{A(y,(1-\delta) r,r)} u^2\,dm}.$$

Observe now that
$$A_y^2 := A(y,2r,(2+\delta) r) \subset A(x,(2-\gamma)r,(2+\delta+\gamma) r) =: A_x^2,$$
and
$$A_y^1 := A(y,(1-\delta) r,r) \supset A(x,(1-\delta+\gamma) r,(1-\gamma)r) =: A_x^1.$$
Thus,
\begin{align*}
F(y,r) & \leq 
\log_2\frac{\avint_{A_y^2} u^2\,dm}{ \avint_{A_y^1} u^2\,dm} \leq
\log_2\left( \frac{\avint_{A_x^2} u^2\,dm}{\avint_{A_x^1} u^2\,dm}\cdot
\frac{m(A_x^2)\,m(A_y^1)}{m(A_x^1)\,m(A_y^2)}\right)\\
& \leq \log_2 \frac{\avint_{\partial B(x,(2+\delta+\gamma) r) } u^2\,dm}{\avint_{\partial B(x,(1-\delta+\gamma) r)} u^2\,dm} + C_{\delta,\gamma},
\end{align*}
where we denoted 
$$C_{\delta,\gamma} = \log_2 \frac{m(A_x^2)\,m(A_y^1)}{m(A_x^1)\,m(A_y^2)}.$$
We choose $\delta=\gamma^{1/2}$. Using just that $\gamma\leq\delta$, we get
$$F(y,r) \leq \log_2 \frac{\avint_{\partial B(x,(2+2\delta) r) } u^2\,dm}{\avint_{\partial B(x,(1-\delta) r)} u^2\,dm} + C_{\delta,\gamma} = \frac{\log \frac{2+2\delta}{1-\delta}}{\log 2}\,
\log_{\frac{2+2\delta}{1-\delta}} \frac{\avint_{\partial B(x,(2+2\delta) r) } u^2\,dm}{\avint_{\partial B(x,(1-\delta) r)} u^2\,dm} + C_{\delta,\gamma}
.$$
Since $(2+2\delta)r\in I$ (by assumption), by Lemma \ref{lemconvex} we have
$$\log_{\frac{2+2\delta}{1-\delta}} \frac{\avint_{\partial B(x,(2+2\delta) r) } u^2\,dm}{\avint_{\partial B(x,(1-\delta) r)} u^2\,dm}\leq F(x,(2+2\delta) r).$$
It is also immediate to check that
$$\frac{\log \frac{2+2\delta}{1-\delta}}{\log 2} \leq 1 + C\delta = 1+ C\gamma^{1/2}.$$
Hence,
$$F(y,r) \leq (1+ C\gamma^{1/2}) F(x,(2+2\gamma^{1/2}) r) + C_{\delta,\gamma}.$$

It only remains to show that $C_{\delta,\gamma}\leq  C\gamma^{1/2}$. To this end, observe that
\begin{align*}
\frac{m(A_x^2)}{m(A_y^2)} &= \frac{(2+\delta+\gamma)^n - (2-\gamma)^n}{(2+\delta)^n- 2^n}\\
& = \frac{(2+\delta)^n + n(2+\delta)^{n-1}\gamma + O(\gamma^2) - 2^n +n2^{n-1}\gamma + O(\gamma^2)}{(2+\delta)^n- 2^n}\\
& = 1 + 
 \frac{ n(2+\delta)^{n-1}\gamma  + n2^{n-1}\gamma + O(\gamma^2)}{n2^{n-1}\delta + O(\delta^2)}.
\end{align*}
It follows that
$$\bigg|\frac{m(A_x^2)}{m(A_y^2)} - 1\bigg|\leq C\,\frac{\gamma}\delta = C\gamma^{1/2}.$$
Almost the same arguments show that
$$\bigg|\frac{m(A_y^1)}{m(A_x^1)} - 1\bigg|\leq C\,\frac{\gamma}\delta = C\gamma^{1/2}.$$
Therefore,
$$C_{\delta,\gamma} = \log_2\frac{m(A_x^2)}{m(A_y^2)} +  \log_2\frac{m(A_y^1)}{m(A_x^1)}\lesssim\gamma^{1/2},$$
as wished.
\end{proof}

\vv


\section{The Key Lemma}\label{seckey}

To prove Theorem \ref{teomain} we consider an arbitrary ball $B_0$ centered in $\Sigma$ such that 
$M^2B_0\subset B$, where
$B$ is the ball in Theorem \ref{teomain} and $M\gg1$ will be fixed below. We denote 
 $\Sigma_0 =
\partial \Omega \cap MB_0$. We will show that if $u$ is a non-zero (i.e., not identically zero) function harmonic in $\Omega$ and continuous in $\overline\Omega$ which vanishes in $\Sigma$, then the normal derivative $\partial_\nu u$ 
cannot vanish
in a subset of $\Sigma_0\cap B_0$ with positive surface measure.
Clearly, this suffices to prove Theorem \ref{teomain}.

Let $H_0$ be the horizontal hyperplane through the origin.
By the hypotheses in the theorem, we can assume that $\partial \Omega\cap MB_0$ is a Lipschitz graph with respect to the hyperplane $H_0$ with slope at most $\tau_0\ll1$, and that $\Omega\cap MB_0$
is above the graph.
We 
consider the following Whitney decomposition of $\Omega$: we have a family $\WW$ of dyadic cubes in $\R^n$ with disjoint interiors such that
$$\bigcup_{Q\in\WW} Q = \Omega,$$
and moreover there are
 some constants $\Lambda>20$ and $D_0\geq1$ such the following holds for every $Q\in\WW$:
\begin{itemize}
\item[(i)] $10Q \subset \Omega$;
\item[(ii)] $\Lambda Q \cap \partial\Omega \neq \varnothing$;
\item[(iii)] there are at most $D_0$ cubes $Q'\in\WW$
such that $10Q \cap 10Q' \neq \varnothing$. Further, for such cubes $Q'$, we have $\frac12\ell(Q')\leq \ell(Q)\leq 2\ell(Q')$.
\end{itemize}
Above, we denote by $\ell(Q)$ the side length of $Q$. 
From the properties (i) and (ii) it is clear that $\dist(Q,\partial\Omega)\approx\ell(Q)$. We assume that
the Whitney cubes are small enough so that
\begin{equation}\label{eqeq29}
\diam(Q)< \frac1{20}\,\dist(Q,\partial\Omega).
\end{equation}
The arguments to construct a Whitney decomposition satisfying the properties above are
standard but we include the detailed arguments in Lemma 
\ref{lem-whitney} below for the convenience of the reader.


Let $\Pi$ denote the
orthogonal projection on $H_0$.
By translating the usual dyadic
lattice if necessary, we can
assume that there exists some cube $R_0\in\WW$ such that $\Pi(B_0)\subset \Pi(R_0)$ and  $\ell(R_0)\leq C\,r(B_0)$ and moreover $R_0 \subset \frac M2B_0$, for $M$ big enough.

Next we need to define some ``generations" of cubes in $\WW$.
We let $\DD_\WW^0(R_0) = \{R_0\}$. For $k\geq 1$ we define $\DD_\WW^k(R_0)$ as follows.
Let 
\begin{equation}\label{eqjr0}
J(R_0) = \{\Pi(Q):Q\in\WW \mbox{ such that $\Pi(Q)\subset \Pi(R_0)$ and $Q$ is below $R_0$}\}.
\end{equation}
Observe that $J(R_0)$ is a family of $(n-1)$-dimensional dyadic cubes in $H_0$, all of them contained in 
$\Pi(R_0)$. Let  $J_k(R_0)\subset J(R_0)$ be the subfamily of $(n-1)$-dimensional dyadic cubes in $H_0$ with side length equal to $2^{-k}\ell(R_0)$.
 To each $Q'\in J_k(R_0)$ we assign some $Q\in\WW$ such that $\Pi(Q)=Q'$, $\Pi(Q)\subset \Pi(R_0)$, and such that $Q$ is below $R_0$ (see Lemma \ref{lemwhit2} for more details), and we write $s(Q')=Q$. Notice there may be
more than one possible choice for $Q$. However, the choice is irrelevant. Anyway, for definiteness
we take the cube $Q$ that is closest to $R_0$ among all the possible choices. Then we define
$$\DD_\WW^k(R_0) = \{s(Q'):Q'\in J_k(R_0)\}.$$
 Next we let 
$$\DD_\WW(R_0) = \bigcup_{k\geq0} \DD_\WW^k(R_0)\}.$$
Notice that, for each $k$, the family $\{\Pi(Q):Q\in\DD_\WW^k(R_0)\}$ is a partition of $\Pi(R_0)$.
Finally, for each $R\in\DD_\WW^k(R_0)$ and $j\geq1$ we denote
\begin{equation*}
\DD_\WW^j(R) = \{Q\in\DD_\WW^{k+j}(R_0):\Pi(Q)\subset\Pi(R)\}.
\end{equation*}


By the properties of the Whitney cubes, it is easy to check that
$$Q\in\DD_\WW(R_0)\quad \Rightarrow\quad \dist(Q,\Sigma_0) = \dist(Q,\partial\Omega)
\approx\ell(Q).$$
From now on, for any cube $Q$, we denote by $x_Q$ its center. Further, we denote
by $m_{n-1}$ the $(n-1)$-dimensional Lebesgue measure on the hyperplane $H_0$.

\begin{keylemma}\label{keylemma}
Under the assumptions of Theorem \ref{teomain},
let $R_0$ be as above and let $N_0>1$ be big enough. There exists some absolute constant $\delta_0>0$ such that for all $A\gg1$ big enough the following holds, assuming also $\tau_0$ small enough
and $M$ big enough.
Let $R\in \DD_\WW(R_0)$ satisfy $F(x_R,A\,\ell(R))\geq N_0$.
There exists some positive integer $K=K(A)$ big enough such that if we let 
$$\GZ_K(R)=\big\{Q\in\DD_\WW^K(R):F(x_Q,A\ell(Q))\leq \tfrac12 \, F(x_R,A\,\ell(R))\big\},$$
then:
\begin{itemize}
\item[(a)] $m_{n-1}\Big(\bigcup_{Q\in \GZ_K(R)} \Pi(Q)\Big) \geq \delta_0\,m_{n-1}(\Pi(R))$.

\item[(b)] For all $Q\in \DD_\WW^K(R)$, it holds
$$F(x_Q,A\ell(Q))\leq (1+CA^{-1/2})\,F(x_R,A\ell(R)).$$
\end{itemize}
\end{keylemma}

A key point in the lemma is that $\delta_0$ does not depend on $A$. On the other hand, $\tau_0$, $M$, and $K$
depend on $A$. The constant $N_0$ is also an absolute constant independent of the other parameters.

The general strategy for the proof of the Key Lemma is similar to one of the Hyperplane Lemma 4.1 from \cite{Logunov1}. The main differences stem from the fact that in the lemma above we wish to estimate the frequency function
in points that are close to $\partial\Omega$, and then we have to be more careful and more precise with the monotonicity
properties of the frequency function.

A basic tool for the proof of the Key Lemma \ref{keylemma} is the following result on quantitative Cauchy uniqueness:

\begin{theorem}\label{teoARRV}
Let $v$ be a function harmonic in the half ball $B_1^+ =\{x\in\R^n:|x|<1,x_n>0\}$ and $C^1$ smooth up to
the boundary. Let $\Gamma$ the following subset of $\partial B_1^+$:
$$\Gamma =\{x\in\R^n:|x|<3/4,x_n=0\}.$$
Suppose that
$$\int_{B_1^+}|v|^2\,dm\leq 1$$
and
$$\sup_{\Gamma}|v| + \sup_{\Gamma}|\nabla v| \leq \ve,$$
for some $\ve\in(0,1)$. Then
$$\sup_{B(1/2,1/4)}|v|\leq C\ve^\alpha,$$
where  $C,\alpha$ are positive absolute constants.
\end{theorem}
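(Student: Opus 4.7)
The theorem is a quantitative unique continuation statement for the Cauchy problem for harmonic functions, and the natural route is to combine extension-by-reflection with Hadamard's three-balls inequality and a Harnack-type chain iteration. The three-balls inequality says that if $w$ is harmonic in $B_R(y)\subset\R^n$ then, for $0<\rho<r<R$,
$$\sup_{B_r(y)}|w|\;\leq\; C\,\Bigl(\sup_{B_\rho(y)}|w|\Bigr)^{\theta}\,\Bigl(\sup_{B_R(y)}|w|\Bigr)^{1-\theta}\qquad\text{with }\theta=\frac{\log(R/r)}{\log(R/\rho)}\in(0,1);$$
this is just the log-convexity of the spherical $L^2$ mean $h(y,\cdot)$ in $\log r$, i.e.\ the interior analogue of Lemma~\ref{lemconvex}.

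\emph{Step 1 (reflection).} I would extend $v$ to $B_{3/4}$ by odd reflection across $\Gamma$, setting $\tilde v(x',x_n):=v(x',x_n)$ for $x_n\geq 0$ and $\tilde v(x',x_n):=-v(x',-x_n)$ for $x_n<0$. The hypotheses $|v|,|\nabla v|\leq\ve$ on $\Gamma$ translate into jump bounds $|[\tilde v]|\leq 2\ve$ and $|[\partial_n\tilde v]|\leq 2\ve$ across $\Gamma\cap B_{3/4}$, so $\Delta\tilde v$ is a distribution supported on $\Gamma$ equal to a sum of a single- and a double-layer of surface density $O(\ve)$. Writing $\tilde v=h+g$ with $g$ the Newtonian potential of $\Delta\tilde v$ (cut off smoothly to $B_{3/4}$), standard single- and double-layer bounds give $\|g\|_{L^\infty(B_{2/3})}\leq C\ve$, and $h$ is genuinely harmonic in $B_{2/3}$. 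The $L^2$-bound on $v$ combined with interior $L^2\to L^\infty$ estimates yields $\|h\|_{L^\infty(B_{2/3})}\leq C$.

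\emph{Step 2 (Harnack chain).} Pick a point $p_0\in\Gamma$ near $0$. Taylor expansion in $x_n$ around $\Gamma$, combined with the interior bound $|\partial_n^2 v|\leq C$ away from $\partial B_1^+$, shows $|v|\leq C\ve$ in a ball $B_{r_0}(p_0)$ near $\Gamma$; the decisive point is to ensure $r_0$ is comparable to a fixed positive constant rather than to $\ve$. Once this is achieved, apply the three-balls inequality to $h$ along a chain of overlapping balls of fixed radii contained in $B_{2/3}$ connecting $B_{r_0}(p_0)$ to $B(\tfrac12 e_n,\tfrac14)$. Each application produces an exponent $\theta_i\in(0,1)$ depending only on the fixed radii in the chain; after $N=O(1)$ steps one obtains $\|h\|_{L^\infty(B(\frac12 e_n,\frac14))}\leq C\ve^{\theta_1\cdots\theta_N}$. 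Setting $\alpha:=\theta_1\cdots\theta_N$ and absorbing the $O(\ve)$ error $g$ yields the theorem.

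The \emph{main obstacle} is the very first transfer of the Cauchy data smallness on $\Gamma$ into $\ve$-smallness on a ball of macroscopic radius. A crude Taylor expansion only gives smallness in an $\ve$-thin layer, and feeding such a thin ball directly into the three-balls inequality degrades the polynomial rate $\ve^\alpha$ into merely $|\log\ve|^{-\alpha}$. The robust way around this obstacle, which is the route of Alessandrini--Rondi--Rosset--Vessella, is to derive the estimate directly from a Carleman inequality with a weight that simultaneously encodes the Cauchy data smallness on $\Gamma$ and the interior propagation: this replaces the reflection and three-balls iteration by a single weighted integration by parts and delivers the polynomial rate $\ve^\alpha$ in one stroke.
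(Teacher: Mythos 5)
The paper does not prove this statement; it invokes it as a black box from Lin's Lemma~4.3 and from Theorem~1.7 of Alessandrini--Rondi--Rosset--Vessella. So your sketch has to stand on its own, and you have already flagged the crucial gap yourself: Step~2's Taylor expansion transfers $\ve$-smallness only to a slab whose width shrinks with $\ve$ (the interior bound on $\partial_n^2 v$ blows up as $x_n\to0$, and the $C^1$ hypothesis at $\Gamma$ carries no modulus of continuity), and feeding such a thin seed into the three-ball chain degrades the rate to logarithmic. That diagnosis is correct, and since you do not then execute the Carleman argument, the proposal as written is incomplete.

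What you did not notice is that Step~1 also has a structural flaw, and repairing it actually closes the gap \emph{without} Carleman weights. For the odd extension $\tilde v(x',x_n)=-v(x',-x_n)$ one has $[\partial_n\tilde v]=0$ across $\Gamma$ (both one-sided limits equal $\partial_n v(x',0)$), so $\Delta\tilde v$ is a pure double layer with density $2v|_\Gamma$ and the hypothesis $|\nabla v|\le\ve$ is never used. Worse, $\tilde v$, hence $\Delta\tilde v$, hence $g=\EE*\Delta\tilde v$ and finally $h=\tilde v-g$ are all \emph{odd} in $x_n$; an odd harmonic function vanishes identically on $\Gamma$, so the deduced bound $|h|\lesssim\ve$ on $\Gamma$ is vacuous and cannot seed a three-ball chain (consider $h(x)=x_n$). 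The correct move is extension by \emph{zero}: $\tilde v=v$ for $x_n>0$ and $\tilde v=0$ for $x_n<0$. Then $\Delta\tilde v$ is a single layer plus a double layer supported on $\Gamma$, with densities controlled respectively by $\partial_n v|_\Gamma$ and $v|_\Gamma$, both $O(\ve)$, so that both pieces of the Cauchy data enter. After a cutoff $\eta$ with $\supp\eta\subset\{|x|<3/4\}$, write $\eta\tilde v=g_1+g_2$ with $g_1=\EE*(\eta\,\Delta\tilde v)$, $\|g_1\|_\infty\lesssim\ve$, and $g_2$ harmonic and bounded in a fixed ball; the decisive new fact is that $\eta\tilde v\equiv0$ on the entire lower half-ball, so $g_2=-g_1$ there and hence $|g_2|\lesssim\ve$ on a ball of \emph{fixed} positive radius below $\Gamma$. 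That is exactly the macroscopic seed your Step~2 needed, and the three-ball chain then gives the polynomial rate directly. (One still needs some care with radii because $B(\tfrac12 e_n,\tfrac14)$ touches $\partial B_{3/4}$, but that is routine.) Your fallback suggestion of the ARRV Carleman inequality is of course also valid and is the approach taken in the reference the paper cites.
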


This result appears in \cite[Lemma 4.3]{Lin} and it is proven in much greater generality in 
\cite[Theorem 1.7]{ARRV}.
\vv

\begin{rem}\label{rem1}
We claim $T>0$ and $R\in\DD_\WW(R_0)$, if $x\in\Omega$ satisfies
$$\dist(x,R)\leq  T\,\ell(R) \quad\text{ and }\quad \dist(x,\partial \Omega)\geq T^{-1}\,\ell(R),$$
then the interval $(0,A\ell(R))$ is admissible for $x$, assuming $M\gg T,A$ and that $\tau_0$ is small enough, depending on $T$ and $A$. This property will be essential for the proof of the Key Lemma.

To prove the claim it suffices to check that, in the situation above,
\begin{equation}\label{eqrem*20}
(y-x)\cdot \nu(y)\geq 0 \quad \mbox{ $\sigma$-a.e.\ $y\in B(x,r)\cap\partial\Omega$, $0<r\leq A\ell(R)$,
}
\end{equation}
since then Lemma \ref{lemderiv} ensures that $\partial_r F(x,r)\geq0$ for $0<r\leq A\ell(R)$.
To prove \rf{eqrem*20}, let $x'\in\Sigma$ be the point such that $\Pi(x)=\Pi(x')$, so that $x-x'$ is orthogonal to $H_0$. Denote by $H_{x'}$ the hyperplane parallel to $H_0$ through $x'$. Given
$y$ as in \rf{eqrem*20}, let $y'$ be the orthogonal projection of $y$ on $H_{x'}$. See fig.\ \ref{fig*}.
So $y'-x'$ is orthogonal to $x-x'$. Moreover, since the slope of $\Sigma$ is at most $\tau_0$,
$$|y-y'|\leq \tau_0\,|y'-x'|\leq \tau_0\,|y-x|\quad \mbox{ and }\quad|\nu(y) - (-e_n)| \leq \tau_0.$$
Then we write
\begin{align*}
(y-x)\cdot \nu(y) & = \big[(y - y') + (y'-x') + (x'-x)\big] \cdot  \big[-e_n + (e_n + \nu(y))\big]\\
& \geq |x-x'| - 
|x'-x| \, |e_n+\nu(y)| - |y-y'| - |y'-x'| \,|e_n + \nu(y)| \\
& \geq  |x'-x|- \tau_0\,|x-x'|- \tau_0\,|y-x| - \tau_0\,|y-x| \\
&= (1-\tau_0)\,|x'-x| - 2\tau_0\,|y-x|.
\end{align*}
Using now that, for $\tau_0\leq 1/2$, we have $|x'-x|\approx\dist(x,\partial\Omega)$, we get
$$(y-x)\cdot \nu(y)\geq c\,\dist(x,\partial\Omega) -2\tau_0\,|y-x|
\geq c\,T^{-1}\,\ell(R) - 2 \,\tau_0\,A\,\ell(R)\geq0,$$
assuming $\tau_0\leq \frac c2\,T^{-1}$, which proves \rf{eqrem*20}.
\end{rem}

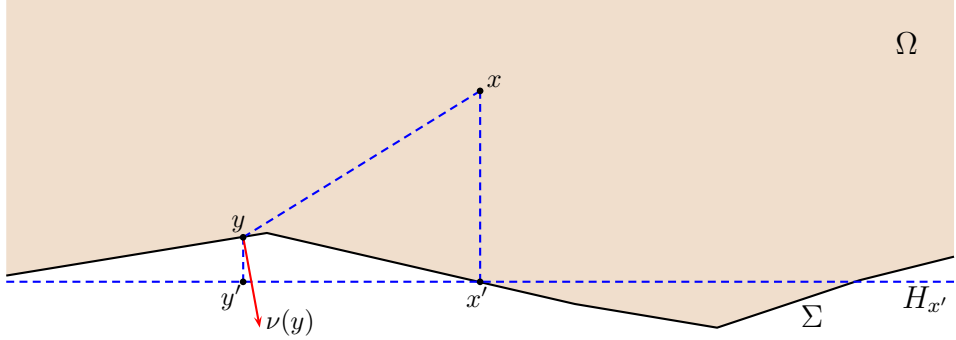
\begin{figure}
\psset{unit=0.63cm}
\newgray{superlightgray}{0.9}

\begin{center}
\begin{pspicture}(20,8)

\pscustom{  
 \psline(0,1.1)(5.5,2)(12,0.5)(15,0)(18,1)(20,1.5) 
 
\gsave
 \psline[linecolor=superlightgray,liftpen=2](20,1.5)(20,7)(0,7)(0,1.1)
\fill[fillstyle=solid,fillcolor=almond]
\grestore 
}

\psline[linecolor=blue,linestyle=dashed,dash=3pt 2pt](10,5)(5,1.91)
\psline[linecolor=blue,linestyle=dashed,dash=3pt 2pt](5,1.91)(5,0.97)

\psline[linecolor=blue,linestyle=dashed,dash=3pt 2pt](10,5)(10,0.97)
\psline[linecolor=blue,linestyle=dashed,dash=3pt 2pt](0,0.97)(20,0.97)
\psline[linecolor=red]{->}(5,1.91)(5.35,0)

\psdots*[dotscale=0.7](10,5)
\psdots*[dotscale=0.7](10,0.97)
\psdots*[dotscale=0.7](5,1.91)
\psdots*[dotscale=0.7](5,0.97)
\rput{0}(19,6){\large $\Omega$}
\rput{0}(17,0.25){\large $\Sigma$}
\rput{0}(19.4,0.6){\large $H_{x'}$}
\rput{0}(10.3,5.2){\small $x$}
\rput{0}(4.9,2.2){\small $y$}
\rput{0}(4.75,0.64){\small $y'$}
\rput{0}(9.94,0.65){\small $x'$}
\rput{0}(6,0.1){{\small{$\nu(y)$}}}
 \end{pspicture}
\vspace{5mm}

 \caption{ \label{fig*} The domain $\Omega$ and the Lipschitz graph $\Sigma$. The points $x,y$ satisfy
 $(y-x)\cdot \nu(y)\geq 0$.}
 
\end{center}

\end{figure}

\vv

\begin{proof}[\bf Proof of the Key Lemma \ref{keylemma}]
For any $Q\in\WW$, we consider its associated cylinder:
$$\CC(Q) = \Pi^{-1}(\Pi(Q)).$$

Let $R\in \DD_\WW(R_0)$ be as in the lemma and let $A\gg1$.
For some $j\gg1$ to be fixed below (independent of $A$), 
let $L$ be a hyperplane parallel to $H_0$ such that
$$\dist(L,\Sigma_0\cap \CC(R)) = 2^{-j}\,\ell(R).$$
Notice that there are two possible choices for $L$.
If $\tau_0$ is small enough (and so $\Sigma$ flat enough) depending on $j$, then
\begin{equation}\label{eqdist1}
\dist(x,\partial\Omega\cap \CC(10R))\approx 2^{-j}\,\ell(R)\quad\mbox{ for all $x\in L\cap \CC(10R)$.}
\end{equation}
Then we choose $L$ so that $L\cap \CC(10R)\subset \Omega$.

Let $J$ denote the family of cubes from $\WW$ which intersect $L\cap \CC(\frac12R)$. 
By the properties of Whitney cubes and \rf{eqdist1}, it is clear that
$$\ell(Q)\approx 2^{-j}\,\ell(R)\quad \text{and} \quad \Pi(Q)\subset\Pi(R)
\quad\mbox{ for all $Q\in J$.}$$
Denote by ${\rm Adm}(2\Lambda Q)$ the set of points  
$x\in \Omega\cap 2\Lambda Q$ such that the interval $(0,\diam(25\Lambda Q))$ is admissible for $x$.
Recall that $\Lambda$ is one of the constants in the definition of Whitney cubes.
We assume $\tau_0$ small enough so that $3Q\subset {\rm Adm}(2\Lambda Q)$\footnote{Notice
that $2\Lambda Q\not\subset{\rm Adm}(2\Lambda Q)$ because $2\Lambda Q$ intersects $\R^n\setminus
\overline\Omega$. Instead, a big portion of $2\Lambda Q$ is contained in ${\rm Adm}(2\Lambda Q)$ if $\Sigma_0$ is flat enough.}.
Then by Lemma \ref{lemperturb}, 
\begin{equation}\label{eqpert1}
\sup_{x\in {\rm Adm}(2\Lambda Q)} F(x,\diam(5\Lambda Q))\leq C_0\,F(x_Q,\diam(20\Lambda Q)) + C_0,
\end{equation}
where $C_0$ is an absolute constant.

\begin{claim*}
There exists some $Q\in J$ such that 
\begin{equation}\label{eqclaim49}
F(x_Q,\diam(20\Lambda Q))\leq \frac{F(x_R,A\ell(R))}{4C_0}
\end{equation}
if $j$ is big enough and
we assume that $\tau_0$ is small enough depending on $j$, and also that $N_0$ is big enough.
\end{claim*}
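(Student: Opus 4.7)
I would argue by contradiction: assume every $Q\in J$ satisfies $F(x_Q,\diam(20\Lambda Q)) > N := F(x_R,A\ell(R))/(4C_0)$, and normalize so that $h(x_R,A\ell(R))=1$. The plan has three stages: extract smallness of $u$ on a slab around $L$, propagate it by quantitative Cauchy uniqueness to a ball of radius $\asymp A\ell(R)$ around $x_R$, and close by Lemma~\ref{lemconvex} at $x_R$.

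\emph{Stage 1 (slab smallness).} By Remark~\ref{rem1} applied with $T\asymp 2^j$ (this is why $\tau_0$ must be small depending on $j$), the interval $(0,A\ell(R))$ is admissible for every $x_Q$ with $Q\in J$, so Lemma~\ref{lemconvex} gives
\[
 h(x_Q,\diam(20\Lambda Q))\leq h(x_Q,A\ell(R))\,(A 2^j)^{-N}.
\]
Comparing $L^2$ integrals over the nested balls $B(x_Q,A\ell(R))\subset B(x_R,(A+1)\ell(R))$ and using the sub-mean value inequality for the subharmonic function $u^2$ together with the monotonicity of $h(x_R,\cdot)$ in the admissible range bounds $h(x_Q,A\ell(R))$ by an absolute constant. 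The sub-mean inequality then upgrades this to the pointwise bound $|u|\leq C(A 2^j)^{-N/2}$ on $\bigcup_{Q\in J}Q$, which covers a slab of thickness $\asymp 2^{-j}\ell(R)$ around $L\cap\CC(\tfrac12 R)$.

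\emph{Stage 2 (Cauchy propagation at scale $A\ell(R)$).} Interior gradient estimates give $|\nabla u|\lesssim 2^j\ell(R)^{-1}(A 2^j)^{-N/2}$ on a hyperplane $L'$ inside the slab. I place a half-ball of radius $R_1\asymp A\ell(R)$ above $L'$ (contained in $\Omega$ since $M\gg A$) and rescale to the unit half-ball; the normalization constant in Theorem~\ref{teoARRV} is controlled by $\|u\|_{L^\infty(B(x_R,A\ell(R)/2))}\leq C$, which follows from the sub-mean inequality and $h(x_R,A\ell(R))=1$. Since on the flat boundary $R_1|\nabla u|\lesssim (A 2^j)^{1-N/2}$ dominates $|u|$, Theorem~\ref{teoARRV} yields
\[
 \sup_{B(z,R_1/4)}|u|\leq C(A 2^j)^{\alpha(1-N/2)}
\]
at an interior ball at height $\asymp A\ell(R)/2$ above $\Sigma$. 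A single application of the three balls inequality for harmonic functions (with concentric balls at $z$ of radii all $\asymp A\ell(R)$, so that their ratios are absolute constants) then propagates this to
\[
 h(x_R,cA\ell(R))\leq C(A 2^j)^{\theta\alpha(1-N/2)}
\]
for some absolute $c,\theta\in(0,1)$.

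\emph{Stage 3 (contradiction).} Lemma~\ref{lemconvex} at $x_R$ gives $1=h(x_R,A\ell(R))\leq h(x_R,cA\ell(R))(1/c)^{F(x_R,A\ell(R))}$, so substituting the stage 2 bound, using $N=F(x_R,A\ell(R))/(4C_0)$, taking logarithms, and dividing by $F(x_R,A\ell(R))\geq N_0$ yields
\[
 \frac{\theta\alpha}{8C_0}\log(A 2^j)\;\leq\;\log(1/c)+o(1)\qquad(N_0\to\infty).
\]
This fails for $A\cdot 2^j$ larger than an absolute constant depending only on $\alpha,\theta,c,C_0$, which is exactly the regime ``$A\gg 1$ big enough and $N_0$ big enough'' allowed by the Key Lemma, yielding the contradiction.

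The main obstacle is the choice of scale: running the Cauchy propagation at the scale $A\ell(R)$ rather than at the Whitney scale $\ell(R)$ is what makes all the ratios in the three balls inequality and in Lemma~\ref{lemconvex} into absolute constants, so the $\log(A 2^j)$ on the left of the final inequality grows unboundedly with $A$ while the right remains bounded. Fitting the half-ball of radius $\asymp A\ell(R)$ inside $\Omega$ is what forces $M\gg A$, and controlling $\|u\|_{L^\infty}$ at scale $A\ell(R)/2$ solely via $h(x_R,A\ell(R))=1$ avoids introducing $h(x_R,\cdot)$ at scales larger than $A\ell(R)$, where the frequency function is uncontrolled by the hypothesis.
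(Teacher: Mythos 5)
Your Stage 2 contains a genuine gap. The smallness you extract in Stage 1 lives on $\bigcup_{Q\in J}Q$, and $J$ consists of Whitney cubes meeting $L\cap\CC(\tfrac12R)$, so the slab on which you control $|u|$ and (after interior estimates) $|\nabla u|$ has lateral extent $\asymp\ell(R)$, not $\asymp A\ell(R)$. Theorem~\ref{teoARRV} applied to a half-ball of radius $R_1\asymp A\ell(R)$ needs the Cauchy data to be small on the \emph{entire} flat piece $\Gamma$, which after rescaling back is a disk of radius $\tfrac34R_1\asymp A\ell(R)$. For $A\gg1$ this disk is much larger than the controlled slab, so you have no bound on $u$ or $\nabla u$ over most of $\Gamma$ and the theorem does not apply at that scale. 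The paper avoids this by applying Theorem~\ref{teoARRV} to the half-ball $B_+$ of radius $\ell(R)/4$ centered at $z_R$, whose flat face is covered by the slab, deducing $h(\wt z_R,\ell(R)/16)\lesssim 2^{-2jc'N\alpha}h(\wt z_R,\ell(R))$ and hence $F(\wt z_R,\ell(R))\gtrsim jN\alpha$, and then getting the contradiction against $F(\wt z_R,\ell(R))\lesssim N$ purely from taking $j$ large (with $j$ an absolute constant independent of $A$, which is what keeps $\delta_0\approx 2^{-j(n-1)}$ absolute).

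A fix that reruns the Cauchy propagation at scale $\ell(R)$ and then pushes the smallness out to scale $A\ell(R)$ by three-balls does not rescue Stage 3: the Hadamard exponent in the three-balls inequality over a ratio $\asymp 1/A$ itself degrades with $A$, so you do not get the $\log(A2^j)$ factor growing with $A$ that your final inequality requires. There is also a secondary issue in Stage 1: the bound $h(x_Q,A\ell(R))\lesssim 1$ from the normalization $h(x_R,A\ell(R))=1$ is not automatic, since $B(x_Q,A\ell(R))\not\subset B(x_R,A\ell(R))$ and you have no frequency control for $h(x_R,\cdot)$ or $h(x_Q,\cdot)$ above scale $A\ell(R)$; the paper sidesteps this by comparing only $h(x_Q,\ell(R))$ with $h(\wt z_R,C_1\ell(R))$ and then descending to $\avint_{B_+}|u|^2$ at scale $\ell(R)/4$ via Lemma~\ref{lemconvex} and the perturbation Lemma~\ref{lemperturb}.
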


Remark again that the choice of $j$ will not depend on the constant $A$.

To prove the claim we intend to apply a rescaled version of Theorem \ref{teoARRV} to a suitable half ball $B_+$ centered
at $z_R$, the orthogonal projection of $x_R$ on $L$. We take
$$B_+ = \big\{x\in B(z_R,\ell(R)/4):x_n> (z_R)_n\big\},$$
so that $B_+\subset \Omega$, assuming that $\Sigma_0\cap\CC(R)$ is below $L$. We also consider the point
$$\wt z_R = z_R + (0,\ldots,0,\ell(R)/8).$$
Notice that $\wt z_R\in B_+$ (in fact, $B(\wt z_R,\ell(R)/8)\subset B_+$).

Aiming for a contradiction, suppose that $F(x_Q,\diam(20\Lambda Q))> \frac N{4C_0}$ for all $Q\in J$, where 
$N=F(x_R,A\ell(R))$.
For each $Q\in J$, by the subharmonicity of $|u|$ and \rf{eqconvex2}, we have
\begin{align*}
\sup_{2Q}|u|  &\lesssim \avint_{\partial B(x_Q,\diam(3Q))}\!\!|u|\,d\sigma \leq h(x_Q,\diam(20\Lambda Q))^{1/2}\\
&\leq h(x_Q,\ell(R))^{1/2}
\left(\frac{\diam(20\Lambda Q)}{\ell(R)}\right)^{F(x_Q,\diam(20\Lambda Q))/2}.
\end{align*}
Here we applied the property described in Remark \ref{rem1}, allowing the smallness of the slope constant $\tau_0$ to depend on $j$. Below we will make repeated use of this property, often without further reference.

To estimate $h(x_Q,\ell(R))$ we take into account that
$$h(x_Q,\ell(R))\leq \avint_{A(x_Q,\ell(R),2\ell(R))}|u|^2\,dm \lesssim \avint_{B(\wt z_R,C_1\ell(R))}|u|^2\,dm
\leq h(\wt z_R,C_1\ell(R)),
$$
since $A(x_Q,\ell(R),2\ell(R))\subset B(\wt z_R,C_1\ell(R))$ for some fixed $C_1>1$.
Further, by \rf{eqconvex2},
\begin{align*}
h(\wt z_R,C_1\ell(R)) & \leq h(\wt z_R,\ell(R)/16) \,(16\, C_1)^{F(\wt z_R,C_1\ell(R))}\\
& \lesssim 
(16\, C_1)^{F(\wt z_R,C_1\ell(R))}\,\avint_{B(\wt z_R,\ell(R)/8)} |u|^2\,dm\\
&\lesssim 
(16\, C_1)^{F(\wt z_R,C_1\ell(R))}\,\avint_{B_+} |u|^2\,dm,
\end{align*}
recalling that $B(\wt z_R,\ell(R)/8)\subset B_+$.
Thus,
\begin{equation}\label{eqkey47}
\sup_{2Q}|u|^2 \lesssim (16\, C_1)^{F(\wt z_R,C_1\ell(R))}\,\left(\frac{\diam(20\Lambda Q)}{\ell(R)}\right)^{F(x_Q,\diam(20\Lambda Q))}\avint_{B_+} |u|^2\,dm.
\end{equation}

Observe now that, by Lemma \ref{lemperturb}, 
$$F(\wt z_R,C_1\ell(R)) \leq C\,F(x_R,C\ell(R)) + C,$$
for a suitable absolute constant $C>2C_1$. So for $A$ and $N_0$ big enough (both independent of $j$, just larger than some absolute constant),
\begin{equation}\label{eqff2}
F(\wt z_R,C_1\ell(R)) \leq C\,F(x_R,A\ell(R)) + C\leq C'\,N.
\end{equation}
Therefore, recalling also the assumption $F(x_Q,\diam(20\Lambda Q))> \frac N{4C_0}$, by \rf{eqkey47} we get
\begin{align*}
\sup_{2Q}|u|^2 & \lesssim (16\, C_1)^{C'N}\,\left(\frac{\diam(20\Lambda Q)}{\ell(R)}\right)^{N/4C_0}
\avint_{B_+} |u|^2\,dm  = 2^{-jcN+C''N}\avint_{B_+} |u|^2\,dm
\end{align*}
(here we took into account that $\diam(20\Lambda Q)\leq\ell(R)$ for $j$ larger than some absolute constant).
Also, by standard interior estimates for harmonic functions,
$$\sup_{\frac32Q}|\nabla u|^2 \lesssim \frac1{\ell(Q)^2}\sup_{2Q}|u|^2 
\lesssim \frac{2^{2j}}{\ell(R)^2}\,2^{-jcN+C''N}\avint_{B_+} |u|^2\,dm.$$
From the last two estimates we deduce that if $j$ is big enough and $N_0$ (and thus $N$) also big enough, then there exists some $c'>0$
such that
$$\sup_{\frac32 Q}\big(|u|^2 + \ell(R)^2\,|\nabla u|^2\big) \lesssim 2^{-jc'N}\avint_{B_+} |u|^2\,dm.$$
Since the cubes $\frac32 Q$ with $Q\in J$ cover the flat part of the boundary of $B_+$, which we denote by $\Gamma$, it is clear that
$$\sup_{\Gamma}\big(|u|^2 + \ell(R)^2\,|\nabla u|^2\big) \lesssim 2^{-jc'N}\avint_{B_+} |u|^2\,dm.$$

Applying now a rescaled version of Theorem \ref{teoARRV} to the half ball $B_+$, we infer that
$$\sup_{B(\wt z_P,\ell(R)/16)} |u|^2 \lesssim 2^{-jc'N\alpha} \avint_{B_+} |u|^2\,dm.$$
Consequently,
$$h(\wt z_R,\ell(R)/16)\lesssim  2^{-2jc'N\alpha} \avint_{B(\wt z_R,\ell(R))} |u|^2\,dm
\lesssim 2^{-2jc'N\alpha} \,h(\wt z_R,\ell(R)),$$
for some fixed $\alpha>0$.
By Lemma \ref{lemconvex}, this implies that
$$F(\wt z_R,\ell(R))\geq \log_{16}\frac{h(\wt z_R,\ell(R))}{h(\wt z_R,\ell(R)/16)}\geq c\,jN\alpha,$$
for some fixed $c>0$.
However, for $j$ big enough this contradicts the fact that $F(\wt z_R,\ell(R))\lesssim N$, which follows from \rf{eqff2}. So the proof of the claim is concluded.

\vv
Now we are ready to introduce the set  $\GZ_K(R)$.
Fix $Q_0\in J$ such that \rf{eqclaim49} holds for $Q_0$. 
Notice that, by \rf{eqpert1}, 
\begin{equation}\label{eqadm45}
\sup_{x\in {\rm Adm}(2\Lambda Q_0)} F(x,\Lambda\ell(Q_0))\leq C_0\,F(x_{Q_0},\diam(20\Lambda Q_0)) + C_0
\leq \frac N4 + C_0\leq \frac N2,
\end{equation}
since $N\ge N_0$ and we assume $N_0$ big enough.
Now we just define
$$\GZ_K(R) = \{Q\in\DD_\WW^{j+k}(R):\Pi(Q)\subset \Pi(Q_0)\},$$
with $k=\lceil\log_2 A\rceil$. So we have $\GZ_K(R)\subset \DD_\WW^K(R)$ with $K=j+k$ and it holds
$\ell(Q)\approx 2^{-k}\ell(Q_0)$ for every $Q\in \GZ_K(R)$.

The property (a) in the lemma follows easily from \rf{eqadm45}. Indeed, if $P\in \GZ_K(R)$,
then taking into account that $x_P\in{\rm Adm}(2\Lambda Q_0)$ for $\tau_0$ small enough (depending on $A$),
$$F(x_P,A\ell(P))\leq F(x_P,\ell(Q_0))\leq F(x_P,\Lambda\ell(Q_0)) \leq \frac N2.$$
Notice also that 
$$m_{n-1}\Big(\bigcup_{Q\in \GZ_K(R)} \Pi(Q)\Big) = \ell(Q_0)^{n-1} \approx (2^{-j}\,\ell(R))^{n-1},
$$
and recall that $j$ is independent of $A$. So (a) holds with $\delta_0\approx 2^{-j(n-1)}$.

The property (b) is an easy consequence of Lemma \ref{lemperturb}.
Indeed, for any $P\in\DD_\WW^K(R)$, since 
$|x_P- x_R|\lesssim \ell(R)$, taking $\gamma\approx A^{-1}$ in \rf{eqpp4}, we deduce
\begin{align*}
F(x_P,A\ell(P))&\leq F(x_P,A\ell(R)/3) \leq (1+C A^{-1/2}) \,F\big(x_R, A\ell(R)\big) + CA^{-1/2}\\
&\leq (1+2C A^{-1/2}) \,N.
\end{align*}
\end{proof}
\vv


\section{The proof of Theorem \ref{teomain}}

Our next objective is to prove the following result:

\begin{lemma}\label{lemliminf}
Under the assumptions of Theorem \ref{teomain}, let $R_0\in \WW$ be as in Section \ref{seckey}.
Then, 
$$\liminf_{r\to 0} \frac{h(x,12r)}{h(x,r)} <\infty
\quad\mbox{ for $\sigma$-a.e.\ $x\in \Sigma_0 \cap \CC(R_0)$.}$$
\end{lemma}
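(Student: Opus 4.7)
The plan is to iterate the Key Lemma \ref{keylemma} down the tree $\DD_\WW(R_0)$ using a probabilistic descent. I would equip $\Pi(R_0) \subset H_0$ with the normalized $(n-1)$-dimensional Lebesgue measure $\mu_0$, and for each $y \in \Pi(R_0)$ and each integer $k \geq 0$ let $Q_k(y)$ be the unique cube in $\DD_\WW^{kK}(R_0)$ whose projection contains $y$. Set $F_k(y) := F(x_{Q_k(y)}, A\ell(Q_k(y)))$, where $K$, $A$, $N_0$, $\delta_0$ are the constants supplied by the Key Lemma. The target is to prove that, for $\mu_0$-a.e.\ $y$, the condition $F_k(y) < N_0$ holds for infinitely many $k$, and then to convert this into a bound of the form $h(x,12r_k)/h(x,r_k) \leq C(N_0)$ along a sequence $r_k\to 0$, for every $x\in\Sigma_0$ with $\Pi(x)=y$.

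For the probabilistic step, the Key Lemma gives, conditionally on $Q_k(y)$ whenever $F_k(y)\geq N_0$: with $\mu_0$-probability at least $\delta_0$ one has $F_{k+1}(y) \leq \tfrac12 F_k(y)$, while in every case $F_{k+1}(y) \leq (1+CA^{-1/2})\,F_k(y)$. Hence, conditionally on $\{F_k\geq N_0\}$,
$$\E[\log F_{k+1}-\log F_k\,|\,Q_k]\leq -\delta_0\log 2+(1-\delta_0)\log(1+CA^{-1/2}),$$
which is a strictly negative constant $-c(\delta_0)$ as soon as $A$ is taken large enough (independently of everything else). So $\log F_k$ behaves as a super-martingale with uniformly negative drift and one-sided increments bounded by $\log(1+CA^{-1/2})$ throughout each excursion into $\{F_k\geq N_0\}$; a routine Azuma/strong-law argument then shows such excursions terminate $\mu_0$-a.s. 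When $F_k<N_0$, Lemma \ref{lemperturb} gives $F_{k+1}\leq (1+CA^{-1/2})N_0+CA^{-1/2}<2N_0$, so any subsequent excursion starts from bounded height and likewise ends almost surely. Iterating, $\{F_k<N_0\}$ is visited for infinitely many $k$, $\mu_0$-almost surely.

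To convert infinitely many such good indices into the stated conclusion, fix $x\in\Sigma_0\cap\CC(R_0)$ with $y=\Pi(x)$ and $k$ satisfying $F_k(y)<N_0$, and set $r_k:=A\ell(Q_k(y))/14$. Since $|x-x_{Q_k(y)}|\lesssim \ell(Q_k(y))\ll r_k$ for $A$ large, the monotonicity of $h(x,\cdot)$ and $h(x_{Q_k(y)},\cdot)$ (which holds by the third identity in \rf{eq1**} even for $x\in\Sigma$) together with inclusions like $B(x,13r_k)\subset B(x_{Q_k(y)},14r_k)$ and $B(x_{Q_k(y)},r_k/4)\subset B(x,r_k)$ give, by routine spherical averaging, $h(x,12r_k)\lesssim h(x_{Q_k(y)},14r_k)$ and $h(x,r_k)\gtrsim h(x_{Q_k(y)},r_k/4)$. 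Remark \ref{rem1} ensures that $(0,A\ell(Q_k(y)))$ is admissible for $x_{Q_k(y)}$ (once $A\leq M$ and $\tau_0$ is sufficiently small depending on $A$), so \rf{eqconvex2} applied at $x_{Q_k(y)}$ between radii $r_k/4$ and $14r_k$ yields $h(x_{Q_k(y)},14r_k)/h(x_{Q_k(y)},r_k/4)\leq 56^{F_k(y)}\leq 56^{N_0}$. Combining, $h(x,12r_k)/h(x,r_k)\leq C(N_0)$; as $r_k\to 0$, this gives $\liminf_{r\to 0}h(x,12r)/h(x,r)<\infty$ for $\mu_0$-a.e.\ $y$. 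Since $\Sigma_0\cap\CC(R_0)$ is a Lipschitz graph of small slope over $\Pi(R_0)$, surface measure on it has bounded density with respect to the pullback of $\mu_0$ under $\Pi$, so the $\mu_0$-null exceptional set corresponds to a $\sigma$-null subset of $\Sigma_0\cap\CC(R_0)$.

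The main obstacle is the probabilistic part. The process $F_k(y)$ is not a martingale, and the Key Lemma only controls descendants sharply when the parent cube already has $F_R\geq N_0$, so care is required to argue that re-entries into the bad region after an apparently successful excursion still terminate a.s., and to verify that $A$ can be chosen large enough for the negative drift $-\delta_0\log 2$ to dominate the admitted growth $\log(1+CA^{-1/2})$ while remaining consistent with all other parameter constraints ($A\leq M$, $\tau_0$ small depending on $A$, $K=K(A)$). Once $A$ is fixed, the geometric comparisons between $h(x,\cdot)$ and $h(x_{Q_k(y)},\cdot)$ and the transfer from $\mu_0$ to $\sigma$ are standard.
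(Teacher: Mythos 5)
Your overall strategy is the same as the paper's: iterate the Key Lemma down the tree $\DD_\WW(R_0)$, use a law of large numbers to show that $\{F_k<N_0\}$ is visited infinitely often for $\mu_0$-a.e.\ $y$, and then transfer the resulting bounded-frequency condition at the Whitney centers $x_{Q_k(y)}$ to a doubling bound for $h(x,\cdot)$ at the boundary point $x$ with $\Pi(x)=y$. Your drift computation $\E[\log F_{k+1}-\log F_k\mid\FF_k]\leq -\delta_0\log 2 + (1-\delta_0)\log(1+CA^{-1/2})$ is correct, and your final conversion (via $r_k=A\ell(Q_k(y))/14$, monotonicity of $h$, and comparison of spherical averages at nearby centers using the subharmonicity of $|u|$) is sound and amounts to the same comparison as in the paper. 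The measure transfer from $\mu_0$ on $\Pi(R_0)$ to $\sigma$ on $\Sigma_0\cap\CC(R_0)$ is also essentially what the paper does with the pushforward $\mu=\Pi_{\Sigma_0}\#(m_{n-1}|_{\CC(R_0)\cap H_0})$.

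The gap, which you partly flag yourself, is in the concrete probabilistic step ``a routine Azuma/strong-law argument then shows such excursions terminate a.s.'' Azuma's inequality (and the straightforward strong laws) require increments bounded on \emph{both} sides, but the increments of $\log F_k$ are only bounded above by $\log(1+CA^{-1/2})$: the Key Lemma says $F_{k+1}\leq \tfrac12 F_k$ on $G(Q_k)$, and nothing prevents $F_{k+1}$ from being far smaller than $F_k/2$ there. So a direct supermartingale/Azuma argument on $\log F_k$ does not go through. The correct way to run the argument — and this is precisely what the paper's proof does, via the mean-zero, pairwise orthogonal, uniformly bounded functions $f_j$ and Etemadi's law of large numbers — is to apply the concentration to the \emph{bounded} indicator data instead: set $\eta_j=\chi_{G(Q_j)}$ (with $\eta_j:=1$ once $F_j<N_0$), note $\E[\eta_j\mid\FF_{j-1}]\geq\delta_0$ and $\eta_j\in\{0,1\}$, so that a martingale SLLN or Azuma + Borel--Cantelli gives $\liminf_m \frac1m\sum_{j\leq m}\eta_j\geq\delta_0$ a.s. For a point with $F_j\geq N_0$ for all $j\geq h$, this yields $F_{m+1}\leq F_h\,2^{-\delta_0(m-h)/2}(1+CA^{-1/2})^{m-h}\to 0$ once $A$ is large (this is the only place the size of $A$ matters, consistent with $\tau_0$ and $M$ chosen afterwards), contradicting $F_{m+1}\geq N_0$. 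This shows $\mu_0\big(\bigcap_{j\geq h}\{F_j\geq N_0\}\big)=0$ for each $h$, hence infinitely many good indices a.e.\ — the excursion/re-entry bookkeeping you worry about is unnecessary once one argues this way, and that is exactly the route the paper takes.
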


Recall that $\CC(R_0)$ is the cylinder
$$\CC(R_0) = \Pi^{-1}(\Pi(R_0)),$$
and it contains $B_0$, by the assumption just after \rf{eqeq29}.
\vv

The proof of the preceding lemma will use the following version of the law of large numbers, due to
Etemadi \cite{Etemadi}:

\begin{theorem}\label{teoete}
Let $\{ X_k\}_{k\geq1}$ be a sequence of non-negative random variables with finite second
moments such that:
\begin{itemize}
\item[(a)] $\sup_{k\geq1} \E X_k< \infty$,
\item[(b)] $\E(X_j\,X_k) \leq \E X_j\,\E X_k$ for $j\neq k$, and
\item[(c)] $\sum_{k\geq1} \frac1{k^2}\,{\rm Var} X_k <\infty$.
\end{itemize}
Let $S_m = X_1+\ldots+X_m$. Then
$$\lim_{m\to\infty} \frac{S_m - \E S_m}{m} = 0 \quad \mbox{ almost surely}.$$
\end{theorem}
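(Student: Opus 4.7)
The plan is to adapt the Etemadi-style subsequence argument to this non-independent but negatively-correlated setting. Setting $Y_k = X_k - \E X_k$, hypothesis (b) gives $\E(Y_j Y_k) = \E(X_j X_k) - \E X_j\,\E X_k \le 0$ for $j\ne k$, so the variance of any partial sum is dominated by the sum of variances:
$${\rm Var}(S_m) \;=\; \sum_{k=1}^m {\rm Var}(X_k) + 2\sum_{j<k}\E(Y_j Y_k) \;\le\; \sum_{k=1}^m {\rm Var}(X_k).$$
Fix $\alpha>1$ and consider the geometric subsequence $m_\ell = \lfloor \alpha^\ell\rfloor$. I would first prove almost sure convergence along $\{m_\ell\}$ and then fill in the gaps using monotonicity coming from $X_k\ge 0$.

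For each $\varepsilon>0$, Chebyshev's inequality combined with the variance bound above yields
$$\sum_{\ell\ge 1}\mathbb{P}\!\left(\frac{|S_{m_\ell}-\E S_{m_\ell}|}{m_\ell}>\varepsilon\right) \le \frac1{\varepsilon^2}\sum_{\ell\ge 1}\frac1{m_\ell^2}\sum_{k=1}^{m_\ell}{\rm Var}(X_k) = \frac1{\varepsilon^2}\sum_{k\ge 1}{\rm Var}(X_k)\sum_{\ell:\,m_\ell\ge k}\frac1{m_\ell^2}.$$
Because $m_\ell\asymp\alpha^\ell$, a geometric series bounds the inner sum by $C_\alpha/k^2$, so the total is dominated by $C_\alpha\varepsilon^{-2}\sum_k k^{-2}{\rm Var}(X_k)$, which is finite by hypothesis (c). Borel--Cantelli then gives $(S_{m_\ell}-\E S_{m_\ell})/m_\ell \to 0$ almost surely.

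To pass from the subsequence to arbitrary $m$, fix $m$ with $m_\ell\le m<m_{\ell+1}$. Non-negativity of the $X_k$'s makes both $S_m$ and $\E S_m$ non-decreasing in $m$, so $S_{m_\ell}\le S_m\le S_{m_{\ell+1}}$ and similarly for the means. A sandwich argument gives
$$|S_m - \E S_m| \le |S_{m_\ell}-\E S_{m_\ell}| + |S_{m_{\ell+1}}-\E S_{m_{\ell+1}}| + (\E S_{m_{\ell+1}}-\E S_{m_\ell}).$$
By (a) we have $\E X_k\le C$, so the deterministic last term divided by $m\ge m_\ell$ is at most $C(m_{\ell+1}-m_\ell)/m_\ell \to C(\alpha-1)$. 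Letting $\ell\to\infty$ the two variance terms vanish almost surely, and we obtain $\limsup_m |S_m-\E S_m|/m \le C(\alpha-1)$ almost surely. Intersecting the corresponding almost-sure sets over a sequence $\alpha_j\downarrow 1$ finishes the proof.

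The crux is that hypotheses (b) and (c) are substantially weaker than independence, so only the second-moment scale of Chebyshev is available; the $k^{-2}$ weight in (c) is calibrated precisely so that, after the Fubini-type exchange of summation, the Borel--Cantelli bound along a geometrically-spaced subsequence becomes finite. The non-negativity assumption on $X_k$ is what enables the cheap monotone sandwich to handle the gaps—without it one would be forced into a Rademacher--Menshov maximal inequality, which requires a stronger variance hypothesis.
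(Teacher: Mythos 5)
Your proof is correct: the variance bound from the nonpositive covariances, Chebyshev plus Borel--Cantelli along the geometric subsequence $m_\ell=\lfloor\alpha^\ell\rfloor$ (with the Tonelli exchange calibrated exactly by hypothesis (c)), and the monotone sandwich using nonnegativity and (a), followed by letting $\alpha\downarrow1$, all go through as written. The paper does not prove this statement itself --- it quotes it as Etemadi's theorem with a citation --- and your argument is essentially Etemadi's original subsequence proof, so there is nothing to reconcile.
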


\vv
\begin{proof}[\bf Proof of Lemma \ref{lemliminf}]
Let $\Pi_{\Sigma_0}:\CC(R_0)\to \CC(R_0)\cap\Sigma_0$ denote the projection on $\CC(R_0)\cap\Sigma_0$ in the direction orthogonal to the
horizontal hyperplane $H_0$. We consider the measure $$\mu=\Pi_{\Sigma_0}\# (m_{n-1}|_{\CC(R_0)\cap H_0}).$$ This is the image measure (or push-forward measure) of the $(n-1)$-dimensional Lebesgue measure on $\CC(R_0)\cap H_0$ to $\CC(R_0)\cap\Sigma_0$. Obviously, $\mu$ is mutually absolutely continuous with 
the surface measure $\sigma$ on $\CC(R_0)\cap\Sigma_0$. 

Next we consider the families of cubes from $\WW$ defined by
\begin{equation}\label{deftj}
\TT_j' = \big\{ Q\in\DD_\WW^{jK}(R_0): F(x_Q,A\ell(Q))\leq N_0\big\}, \qquad j\geq0,
\end{equation}
where the constants $K$, $A$, and $N_0$ are given by the Key Lemma \ref{keylemma} (the precise large value of $A$
will be chosen below).
We also denote
$$R_{\Sigma_0} = \Pi_{\Sigma_0}(R_0)$$
and consider the following subset of $R_{\Sigma_0}$:
$$T_j = \bigcup_{Q\in\TT_j'} \Pi_{\Sigma_0}(Q),\qquad j\geq 0.$$
We will prove the following:

\begin{claim*}
We have
$$\mu\Big(R_{\Sigma_0} \setminus \limsup_{j\to\infty}  T_j\Big)=0.$$
\end{claim*}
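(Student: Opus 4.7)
The plan is to argue by contradiction by applying Etemadi's law of large numbers (Theorem~\ref{teoete}) to a sequence of Bernoulli indicators that record the ``good $K$-level descents'' produced by the Key Lemma. Via the bilipschitz projection $\Pi_{\Sigma_0}$, identify $R_{\Sigma_0}$ with $\Pi(R_0)\subset H_0$ and $\mu$ with a constant multiple of $\nu := m_{n-1}|_{\Pi(R_0)}$. For $y\in\Pi(R_0)$ and $j\geq 0$, let $Q_j(y)\in\DD_\WW^{jK}(R_0)$ be the unique cube with $y\in\Pi(Q_j(y))$, and set $N_j(y) := F(x_{Q_j(y)},\,A\,\ell(Q_j(y)))$. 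Then $R_{\Sigma_0}\setminus\limsup_j T_j$ corresponds to $E = \bigcup_{j_0} E_{j_0}$ with $E_{j_0} := \{y\in\Pi(R_0) : N_j(y)>N_0 \text{ for every } j\geq j_0\}$. If the Claim fails, then $\nu(E_{j_0})>0$ for some $j_0$; applying Lebesgue density to the refining partitions $\{\Pi(Q):Q\in\DD_\WW^{jK}(R_0)\}$ and replacing $j_0$ by a larger value, I can fix a single cube $R\in\DD_\WW^{j_0 K}(R_0)$ with $\nu_R(E_{j_0})\geq 1-\eta$, where $\nu_R:=\nu|_{\Pi(R)}/\nu(\Pi(R))$ and $\eta>0$ is a small constant to be chosen; necessarily $F(x_R,A\ell(R))>N_0$.

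On the probability space $(\Pi(R),\nu_R)$, I introduce for $j\geq 1$ the indicators $X_j:\Pi(R)\to\{0,1\}$ defined by $X_j(y)=1$ iff the parent $P:=Q_{j_0+j-1}(y)$ satisfies $F(x_P,A\ell(P))>N_0$ and the child $Q_{j_0+j}(y)$ lies in $\GZ_K(P)$, and set $S_m := X_1+\cdots+X_m$. By Key Lemma part (a), conditional on any bad parent $P$, $\nu_R\bigl(\{X_j=1\}\mid Q_{j_0+j-1}=P\bigr)\geq \delta_0$; since every parent of a point of $E_{j_0}\cap\Pi(R)$ is bad, the density estimate yields $\E X_j\geq \delta_0(1-\eta)$ uniformly in $j$. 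My plan is to verify the remaining Etemadi hypotheses (b)--(c) by exploiting the dyadic nested structure of the Whitney cubes and the fact that $\GZ_K(P)$ is intrinsic to $P$: for $j<k$, $X_j$ is $\sigma(Q_{j_0+k-1})$-measurable while $\nu_R[\,X_k=1\mid\sigma(Q_{j_0+k-1})\,]$ takes values in $\{0\}\cup[\delta_0,1]$, so a tower-property calculation with this scale separation should deliver the pairwise bound $\E(X_jX_k)\leq \E X_j\,\E X_k$; the variance condition is then immediate from $X_j\in\{0,1\}$. Etemadi's theorem then gives $S_m(y)/m\geq\tfrac12\delta_0(1-\eta)$ for all sufficiently large $m$, for $\nu_R$-a.e.~$y\in\Pi(R)$.

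For the contradiction, combine Key Lemma (b) (every $K$-level descent satisfies $N_{j_0+j}\leq(1+CA^{-1/2})N_{j_0+j-1}$) with the fact that $X_j=1$ forces $N_{j_0+j}\leq N_{j_0+j-1}/2$, and telescope: for $y\in E_{j_0}\cap\Pi(R)$,
$$\log_2 N_{j_0+m}(y)\;\leq\;\log_2 N_{j_0}(y)\;-\;S_m(y)\;+\;CA^{-1/2}\,m.$$
Applying Key Lemma (b) to $R$ itself bounds $\log_2 N_{j_0}(y)$ uniformly on $\Pi(R)$ in terms of $F(x_R,A\ell(R))$. Since $\log_2 N_{j_0+m}(y)>0$ on $E_{j_0}$, dividing by $m$ gives $S_m/m\leq CA^{-1/2}+o(1)$. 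Choosing $A$ large enough that $CA^{-1/2}<\delta_0/4$ then contradicts $S_m/m\geq \delta_0/3$ on the positive-measure set $E_{j_0}\cap\Pi(R)$.

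The main obstacle will be the verification of Etemadi's pairwise correlation hypothesis $\E(X_jX_k)\leq\E X_j\,\E X_k$. The $X_j$ are not literally independent --- they live on nested dyadic scales --- but the Key Lemma delivers a uniform lower bound $\delta_0$ on the conditional probability of a good descent inside any bad parent, regardless of history. Combined with the parent-only definition of $\GZ_K(P)$, this geometric uniformity should translate, through a careful tower-property argument, into the correlation estimate needed to invoke Etemadi.
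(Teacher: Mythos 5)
Your broad strategy matches the paper's: Etemadi's weak law of large numbers to show the good descent happens on a positive fraction of scales almost everywhere, then telescope the Key Lemma inequalities to drive the frequency below $N_0$ for a contradiction. The localization to a high-density cube $R$ and the telescoping estimate on $\log_2 N_{j_0+m}$ are both fine. However, the step you flag as the ``main obstacle'' is a genuine gap, and the tower-property heuristic you sketch does not fill it. Write $\mathcal{F}_{k-1}=\sigma(Q_{j_0+k-1})$. The tower property reduces $\E(X_jX_k)\leq \E X_j\,\E X_k$ to showing that $X_j$ and the conditional probability $\E(X_k\mid\mathcal{F}_{k-1})$ are non-positively correlated. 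But $\E(X_k\mid\mathcal{F}_{k-1})$ equals the fraction $\mu(G(P))/\mu(P)$ on bad parents $P$, and this fraction is only known to lie in $[\delta_0,1]$; it is not constant across parents. Nothing in the Key Lemma forbids a scenario in which points with $X_j=1$ systematically land, $K(k-j-1)$ levels later, in bad parents whose $\GZ_K$-fraction is close to $1$, while points with $X_j=0$ land in bad parents with $\GZ_K$-fraction close to $\delta_0$, or in good parents. That would make $X_j$ and $\E(X_k\mid\mathcal{F}_{k-1})$ positively correlated and violate Etemadi's hypothesis (b).

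The paper sidesteps this precisely by \emph{rescaling} the indicator: one sets $f_Q=\frac{\mu(Q)}{\mu(G(Q))}\chi_{G(Q)}-\chi_Q$ (with $f_Q\equiv 0$ once a cube in $\bigcup_j\TT_j$ has been reached), and takes $X_j=1+\sum_Q f_Q$. The factor $\mu(Q)/\mu(G(Q))$ normalizes the conditional mean to be exactly $1$ on each active cube, so the $f_j$ are honest orthogonal martingale differences and $\E(X_iX_j)=\E X_i\,\E X_j$ holds \emph{as an identity}, not as an inequality to be checked. (The rescaled $X_j$ remain bounded by $\delta_0^{-1}$, so Etemadi's hypotheses (a) and (c) are still immediate.) Your raw Bernoulli $X_j$ lack this normalization, and that is exactly why the correlation bound cannot be taken for granted. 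To repair your argument, replace the indicator $\chi_{\{\text{child}\in\GZ_K(P)\}}$ by the weighted version $\frac{\mu(P)}{\mu(G(P))}\chi_{\{\text{child}\in\GZ_K(P)\}}$ on bad parents $P$ (and by $1$ on good parents); the rest of your telescoping goes through essentially unchanged because $\chi_{G(Q_j)}(x)\geq\delta_0\cdot\frac{\mu(Q_j)}{\mu(G(Q_j))}\chi_{G(Q_j)}(x)$, as the paper exploits.
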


Let us see first that the lemma follows from this claim. Indeed, if $x\in T_j$, then there exists
some cube $Q\in\TT_j'$ such that $x\in \Pi_{\Sigma_0}(Q)$.
By construction, $F(x_Q,A\ell(Q))\leq N_0$ and thus, by \rf{eqconvex2},
$$h(x_Q,A\ell(Q))\leq 48^{N_0}\,h(x_Q,A\ell(Q)/48).$$
For $A$ big enough, we have 
$$B(x,A\ell(Q)/24)\supset \tfrac32 B(x_Q,A\ell(Q)/48) \quad\mbox{ and } \quad
\tfrac32 B(x,A\ell(Q)/2)\subset B(x_Q,A\ell(Q)).$$
Then, by the subharmonicity of $|u|$ in a neighborhood of $\Sigma_0$ and standard arguments,
$$h(x,A\ell(Q)/24)\gtrsim h(x_Q,A\ell(Q)/48)\quad \mbox{ and } \quad
h(x,A\ell(Q)/2)\lesssim h(x_Q,A\ell(Q)).$$
Hence, for each $x\in T_j$,
$$\frac{h(x,A\ell(Q)/2)}{h(x,A\ell(Q)/24)} \lesssim 
\frac{h(x_Q,A\ell(Q))}{h(x_Q,A\ell(Q)/48)} \leq 48^{N_0},$$
with $\ell(Q)\approx 2^{-jK}\ell(R_0)$.

Consequently, if $x\in\limsup_{j\to\infty}  T_j$, then there exists a sequence of radii $r_j\to0$ such that
$$\frac{h(x,12r_j)}{h(x,r_j)} \lesssim  48^{N_0},$$
which implies that
$$\liminf_{r\to 0} \frac{h(x,12r)}{h(x,r)} <\infty,$$
and yields the lemma, assuming the claim.

\vv
To prove the claim above we need to introduce some additional notation. For $j\geq 0$ and $K$ as in
the Key Lemma \ref{keylemma}, we denote
$$\wt\DD_j(R_{\Sigma_0})= \Pi_{\Sigma_0} (\DD_\WW^{jK}(R_0)),$$
or more precisely,
$$\wt \DD_j(R_{\Sigma_0}) = \big\{\Pi_{\Sigma_0}(Q'):Q'\in \DD_\WW^{jK}(R_0)\big\}.$$
We also set
$$\wt \DD(R_{\Sigma_0}) = \bigcup_{j\geq0}\wt \DD_j(R_{\Sigma_0}).$$
For any $R\in\wt\DD_j(R_{\Sigma_0})$ such that $R=\Pi_{\Sigma_0}(R')$ for some $R'\in
 \DD_\WW^{jK}(R_0)$, in case that $F(x_{R'},A\,\ell(R'))\geq N_0$ 
 we consider the good set
$$G(R) = \bigcup_{Q'\in \GZ_K(R')} \Pi_{\Sigma_0}(Q'),$$
with $\GZ_K(R')$ as in the Key Lemma \ref{keylemma}. In case that $F(x_{R'},A\,\ell(R'))< N_0$
we let 
$$G(R) =  \Pi_{\Sigma_0}(R')=R.$$
 Finally, we write
$$\TT_j = \big\{ \Pi_{\Sigma_0}(Q'):Q'\in\DD_\WW^{jK}(R_0), F(x_{Q'},A\ell(Q'))\leq N_0\big\},$$
or in other words, 
$$\TT_j=\Pi_{\Sigma_0}(\TT_j'),$$
with $\TT_j'$ defined in \rf{deftj}

To prove the claim we have to show that
$\bigcup_{j\geq h} T_j$ has full $\mu$-measure in $R_{\Sigma_0}$ for every $h\geq0$. To this
end, for any fixed $h$ we define the following functions $f_j$, $j\geq h$:
$$f_j = \sum_{Q\in\wt D_j(R_{\Sigma_0})} f_Q,$$
where $f_Q=0$ if $Q$ is contained in some ``cube'' $\wt Q\in\bigcup_{j\geq h}\TT_j$ and, otherwise,
$$f_Q= \frac{\mu(Q)}{\mu(G(Q))}\,\chi_{G(Q)}- \chi_Q.$$
It is immediate to check that the functions $f_j$ have zero $\mu$-mean and they are orthogonal, i.e.,
$\int f_i\,f_j\,d\mu=0$ if $i\neq j$ (taking into account that $f_j$ has zero $\mu$-mean in each
$Q\in\wt\DD_{j}(R_{\Sigma_0})$ and
is constant in each $P\in\wt\DD_{j+1}(R_{\Sigma_0})$). Observe also that the functions $f_j$ are uniformly bounded, due
to the fact that $\mu(G(Q))\geq \delta_0\,\mu(Q)$ in the latter case by the Key Lemma. So their $L^2(\mu)$
norms are uniformly bounded too.

We consider the probability measure $\mu_{|R_{\Sigma_0}}/\mu(R_{\Sigma_0})$ and
the random variables $X_j= f_j+1$, $j\geq h$. Notice that they are non-negative and the assumptions
in Theorem \ref{teoete} are satisfied. Indeed, (a) and (c) follow from the uniform boundedness of the
functions $f_j$, and the zero mean of each $f_j$ and the mutual orthogonality of the $f_j$'s imply that
$\E(X_i\,X_j) = \E(X_i)\,\E(X_j)$ if $i\neq j$. Applying the theorem then we infer that
\begin{equation}\label{eqlaw}
\lim_{m\to\infty} \frac1m \sum_{j= h+1}^m f_j(x) = 0\quad \mbox{ for $\mu$-a.e.\ $x\in R_{\Sigma_0}$},
\end{equation}
using the fact that $\E X_j=1$ for all $j$.

We will show that 
\begin{equation}\label{eqimp5}
x\in R_{\Sigma_0}\setminus \bigcup_{j\geq h} T_j\qquad \Rightarrow\qquad\lim_{m\to\infty} \frac1m \sum_{j= h+1}^m f_j(x) \neq 0.
\end{equation}
Clearly, by \rf{eqlaw}, this implies that 
$R_{\Sigma_0}\setminus \bigcup_{j\geq h} T_j$ has null $\mu$-measure and finishes the proof of the claim.

We prove \rf{eqimp5} by contradiction. Suppose that there exists some point
$x\in R_{\Sigma_0}\setminus \bigcup_{j\geq h} T_j$ such that $\lim_{m\to\infty} \frac1m \sum_{j= h+1}^m f_j(x) = 0$.
Denote by $Q_j$ the ``cube'' from $\wt D_j(R_{\Sigma_0})$ that contains $x$. Since $Q_i\not\in \TT_i$
for any $i\geq h$, by definition we have
$$f_j(x) = \frac{\mu(Q_j)}{\mu(G(Q_j))}\,\chi_{G(Q_j)}(x) - 1\quad\mbox{ for any $j\geq h$.}$$
Then \rf{eqlaw} tells us that, for any $\ve>0$, 
$$\bigg| \sum_{j= h+1}^m  \frac{\mu(Q_j)}{\mu(G(Q_j))}\,\chi_{G(Q_j)}(x) - m\bigg|\leq \ve\,m$$
for any $m$ big enough. In particular, choosing $\ve=1/2$ we infer that, for some $m_0=m_0(x)$,
$$\sum_{j= h+1}^m  \frac{\mu(Q_j)}{\mu(G(Q_j))}\,\chi_{G(Q_j)}(x)\geq \frac m2
\quad\mbox{ for any $m\geq m_0$.}$$
Since $\mu(G(Q_j))\geq \delta_0\,\mu(Q_j)$, we get
\begin{equation}\label{eqlaw2}
\sum_{j= h+1}^m \chi_{G(Q_j)}(x)\geq  \frac{\delta_0\,m}2 \quad\mbox{ for $m\geq m_0$.}
\end{equation}

For each $j\geq h$, let $Q_j'\in\DD_\WW(R_0)$ be such that $Q_j=\Pi_{\Sigma_0}(Q_j')$.
Recall that the Key Lemma (we can apply this because $F(x_{Q_j'},A\ell(Q_j'))\geq N_0$)
asserts that
$$F(x_{Q_{j+1}'},A\ell(Q_{j+1}'))\leq \frac12\,F(x_{Q_j'},A\ell(Q_j')) \quad\mbox{ if $x\in G(Q_j)$}$$
and otherwise just ensures that
$$F(x_{Q_{j+1}'},A\ell(Q_{j+1}'))\leq (1+CA^{-1/2})\,F(x_{Q_j'},A\ell(Q_j')) \quad\mbox{ if $x\in Q_j\setminus G(Q_j)$}.$$
These estimates and \rf{eqlaw2} imply that
$$F(x_{Q_{m+1}'},A\ell(Q_{m+1}'))\leq \left(\frac12\right)^{\delta_0m/2} \,(1+CA^{-1/2})^m
\quad\mbox{ for $m\geq m_0$.}$$
However, if $A$ is chosen big enough (recall that $A$ is independent of $\delta_0$ and can be taken
arbitrarily big in the Key Lemma \ref{keylemma}), this implies that
$$F(x_{Q_{m}'},A\ell(Q_{m}'))\to 0\quad\mbox{ as $m\to\infty$,}$$
which cannot happen because $x\not\in  \bigcup_{j\geq h} T_j$, recalling the definition of $T_j$.
This concludes the proof of the claim and of the lemma.
\end{proof}
\vv

The proof of Theorem \ref{teomain} will follow as a straightforward consequence of Lemma \ref{lemliminf}
and the next result of Adolfsson and Escauriaza:

\begin{lemma}\cite[Lemma 0.2]{AE} \label{lemAE}
Let $D\subset\R^n$ be a Lipschitz domain and let $V$ be a relatively open subset of $\partial D$.
Let $v$ be a non-zero function harmonic in $D$ and continuous in $\overline D$ which vanishes identically
in $V$, and whose normal derivative $\partial_\nu v$ vanishes in a subset $E\subset V$ of positive
surface measure. Then, for every point $x\in V$ which is a density point of $E$ (with respect to surface measure),
we have
\begin{equation}\label{eqnodob}
\lim_{r\to 0} \frac{\int_{B(x,r)\cap D} |v|\,dm}{\int_{B(x,6r)\cap D} |v|\,dm} = 0.
\end{equation}
\end{lemma}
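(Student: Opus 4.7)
\textbf{Proof plan for Lemma \ref{lemAE}.}
The strategy is a blow-up argument by contradiction. Suppose that at some density point $x_0 \in V$ of $E$, the limit \rf{eqnodob} fails, so there exist $c > 0$ and a sequence $r_k \downarrow 0$ with
$$\int_{B(x_0,r_k) \cap D}|v|\,dm \geq c \int_{B(x_0,6r_k)\cap D}|v|\,dm.$$
By standard iteration this yields a uniform $L^2$-doubling for $v$ at $x_0$ on scales comparable to $r_k$. After translating so $x_0 = 0$ and applying a bi-Lipschitz flattening $\Phi$ of $\partial D$ near $0$, the function $\tilde v = v \circ \Phi^{-1}$ satisfies $\operatorname{div}(A\nabla \tilde v) = 0$ in a half-ball, where $A$ is uniformly elliptic and bounded. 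Since a Lipschitz graph is differentiable a.e.\ and a.e.\ point of $E$ is a density point of $E$, we may assume $0$ is also a differentiability point of $\partial D$, so $A$ is continuous at $0$ with $A(0) = I$. Under $\Phi$, the image of $E$ still has $0$ as a density point relative to the flat boundary $\{y_n = 0\}$, and the conormal derivative of $\tilde v$ vanishes on it.

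\emph{Blow-up step.} Define
$$\tilde v_k(y) = \frac{\tilde v(r_k y)}{M_k}, \qquad M_k^2 = \avint_{B(0,r_k)^+} |\tilde v|^2 \, dm.$$
The doubling ensures that $\|\tilde v_k\|_{L^2(B(0,R)^+)}$ is bounded uniformly in $k$ for each fixed $R > 0$, and the rescaled coefficients $A_k(y) := A(r_k y)$ converge locally uniformly to the identity. By Caccioppoli and De Giorgi--Nash--Moser estimates up to the flat boundary, together with boundary $C^{1,\alpha}$ estimates for divergence-form equations with continuous coefficients (applicable for large $k$ since $A_k \to I$ in $C^0_{\rm loc}$), the sequence $\{\tilde v_k\}$ is precompact in $C^{1,\alpha}_{\rm loc}(\overline{\R^n_+})$. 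Any subsequential limit $V$ is harmonic in $\R^n_+$, continuous up to the flat boundary, vanishes on $\{y_n = 0\}$, and satisfies $\avint_{B_1^+}|V|^2\,dm = 1$; in particular $V \not\equiv 0$.

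\emph{Conclusion and main obstacle.} I claim $\partial_n V \equiv 0$ on $\{y_n = 0\}$ near $0$. Indeed, $\partial_n \tilde v_k$ vanishes on the rescaled set $r_k^{-1}\Phi(E)$, whose relative density in each fixed ball of $\{y_n = 0\}$ tends to $1$ as $k \to \infty$. By the boundary $C^{1,\alpha}$ convergence, $\partial_n \tilde v_k \to \partial_n V$ uniformly on compacts of $\{y_n = 0\}$, so $\partial_n V$ vanishes on a set of full measure and, by continuity, identically near $0$. Extending $V$ by odd reflection across $\{y_n = 0\}$ produces a function $\overline V$ harmonic in a full neighborhood of $0$ in $\R^n$ with both $\overline V$ and $\partial_n \overline V$ vanishing on the flat piece; by Cauchy--Kovalevskaya (Holmgren) uniqueness, $\overline V \equiv 0$ near $0$, and unique continuation forces $V \equiv 0$ on $\R^n_+$, contradicting $\|V\|_{L^2(B_1^+)} = 1$. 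The main obstacle is the $C^{1,\alpha}$ convergence of the blow-ups up to the flat boundary, which relies on Schauder-type boundary regularity for divergence-form equations with continuous (or VMO) coefficients; once this is in hand, every other step is either a classical blow-up manipulation or an immediate consequence of the density-point hypothesis and uniqueness for harmonic functions.
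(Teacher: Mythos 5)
Your blow-up strategy is genuinely different from the paper's proof, which is a direct, quantitative estimate: the paper writes $\mu = \Delta v = -\partial_\nu v\,\sigma$, bounds $\|\mu\|$ by the Rellich--Necas identity together with the density-point hypothesis (so $\|\mu\|\lesssim\ve^{1/2} r^{n/2-2}\|v\|_{L^2(B_{2r})}$), shows that $g=v-\EE*\mu$ is harmonic near $x$, and then applies the three-sphere inequality to $g$ in balls centered just outside $\overline D$. This avoids blow-up, compactness, odd reflection, and any boundary regularity theory for variable-coefficient equations, and it gives a quantitative bound on the ratio for every density point. Your approach is conceptually attractive, but as written it has three gaps.

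First, your claim that "by standard iteration this yields a uniform $L^2$-doubling on scales comparable to $r_k$" is not justified. The contradiction hypothesis gives $\int_{B_{r_k}}|v|\geq c\int_{B_{6r_k}}|v|$ only at the single scale pair $(r_k,6r_k)$; you cannot iterate to $(6r_k,36r_k)$ because the doubling is not assumed there. Consequently, the normalization $M_k^2=\avint_{B(0,r_k)^+}|\tilde v|^2$ does not give bounded $\|\tilde v_k\|_{L^2(B_R^+)}$ for all $R$. This is fixable by converting the one-scale $L^1$ doubling into a one-scale $L^2$ doubling via subharmonicity of $|v|$ and normalizing so that $\avint_{B_1^+}|\tilde v_k|^2=1$ with a uniform bound on $\avint_{B_3^+}|\tilde v_k|^2$; but as stated the step is wrong. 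Second, by insisting that $0$ be a differentiability point of $\partial D$ you only prove the conclusion for almost every density point of $E$, not for every density point as the lemma asserts (and as the paper actually proves). This weaker version still suffices for the application in the paper, but it is not the lemma as stated. Third, and most seriously, the $C^{1,\alpha}_{\rm loc}(\overline{\R^n_+})$ precompactness you rely on does not follow from merely continuous coefficients. Schauder theory requires H\"older (or at least Dini) coefficients, and uniform convergence $A_k\to I$ does not upgrade the modulus of continuity of each $A_k$ at fixed scale; so you have no uniform $C^{1,\alpha}$ estimate, and hence no uniform convergence $\partial_n\tilde v_k\to\partial_n V$ on $\{y_n=0\}$. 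Without that, the step "so $\partial_n V$ vanishes on a set of full measure" is unjustified. The way to close this gap is essentially to re-import the paper's tools: test the weak formulation of $\operatorname{div}(A_k\nabla\tilde v_k)=0$ against $\phi\in C_c^\infty(B_R)$ not vanishing on $\{y_n=0\}$, use the Rellich--Necas identity to bound $\|\partial_{\nu,A_k}\tilde v_k\|_{L^2(\{y_n=0\}\cap B_R)}$ uniformly, and then observe that the boundary term is supported on $E_k^c$, whose measure tends to zero, so $\int_{B_R^+}\nabla V\cdot\nabla\phi\,dy=0$ for all such $\phi$, which forces $\partial_n V=0$. Once you do this, the odd reflection and Holmgren step are fine. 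So the approach can be made to work, but the $C^{1,\alpha}$ invocation is a genuine hole, and once it is filled properly the argument is not shorter than the paper's direct potential-theoretic proof.
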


Actually, the identity \rf{eqnodob} is not stated explicitly in
Lemma 0.2 from \cite{AE}. Instead, it is said that $v$ vanishes to infinite order in $x$. However, a quick 
inspection of the proof shows that the authors actually prove \rf{eqnodob}, which in turn implies that
$v$ vanishes to infinite order in $x$. 
The lemma above relies on \cite[Lemma 1 and Theorem 1]{AEK}. Though the proof of \cite[Lemma 1]{AEK} is not correct - as explained in \cite[paragraph before Lemma 5]{AEWZ} -  one can replace that lemma either
by \cite[Lemma 2.2]{AE} or by more quantitative arguments involving \cite[Lemma 4]{AEWZ} and well known properties of harmonic functions\footnote{I thank Luis Escauriaza for informing me about this fact.}.  For the reader's convenience I provide an alternative self-contained proof in the Appendix \ref{app3}.

\vv

\begin{proof}[\bf Proof of Theorem \ref{teomain}]
As explained at the beginning of Section \ref{seckey}, it suffices to show that $\partial_\nu u$ cannot vanish
in a subset of positive surface measure of $\Sigma_0 \cap \CC(R_0)$ (since this set contains the ball $B_0$). 

For the sake of contradiction, suppose that $\partial_\nu u$ vanishes in a subset $E\subset
\Sigma_0 \cap \CC(R_0)$ of positive surface measure. By Lemma \ref{lemAE}, for any $x\in \Sigma_0 \cap \CC(R_0)$
which is density point of $E$, 
$$\lim_{r\to 0} \frac{\int_{B(x,6r)} |u|\,dm}{\int_{B(x,r)} |u|\,dm} = \infty.$$
By the subharmonicity of $|u|$, for $r$ small enough,
$$h(x,r/2)^{1/2} = \left(\avint_{\partial B(x,r/2)}|u|^2\,d\sigma\right)^{1/2}\lesssim \avint_{B(x,r)}|u|\,dm.$$
Also, by Cauchy-Schwarz and the fact that $h(x,\cdot)$ is non-decreasing in $r$,
$$\avint_{B(x,6r)} |u|\,dm \leq \left(\avint_{B(x,6r)} |u|^2\,dm\right)^{1/2}\leq
h(x,6r)^{1/2}.$$
Therefore,
$$\liminf_{r\to0} \frac{h(x,6r)^{1/2}}{h(x,r/2)^{1/2}} \gtrsim \liminf_{r\to0}  \frac{\avint_{B(x,6r)} |u|\,dm}{\avint_{B(x,r)} |u|\,dm} = \infty.$$
Consequently,
$$\lim_{r\to0} \frac{h(x,12r)}{h(x,r)}=\infty,$$
which contradicts Lemma \ref{lemliminf}.
\end{proof}
\vv


\appendix

\section{Existence of non-tangential limits for $\nabla u$}

Let $\Omega\subset\R^n$ be a Lipschitz domain, and let $\sigma$ denote the surface measure on
$\partial\Omega$. For $\sigma$-a.e.\ $x$, there exists a tangent hyperplane to $\partial\Omega$ in $x$ and 
the outer unit normal $\nu(x)$ is well defined.
For an aperture parameter $a\in(0,1)$ we consider the one sided inner cone with axis in the direction of $-\nu(x)$ defined by
$$X_a^+(x)=\bigl\{y\in\R^{n}:(x-y)\cdot \nu(x)> a|y-x|\bigr\}.$$
Analogously, we consider the outer cone
$$X_a^-(x)=\bigl\{y\in\R^{n}:(y-x)\cdot \nu(x)> a|y-x|\bigr\}.$$
For a given function $f:\R^n\setminus\partial\Omega\to\R$ and a fixed parameter $a\in(0,1)$, we define the non-tangential limits
\begin{equation*}
f_{+,a}(x) = \lim_{X_a^+(x)\ni y\to x} f(y),\qquad f_{-,a}(x) = \lim_{X_a^-(x)\ni y\to x} f(y),
\end{equation*}
whenever they exist. 

Although the following result is already known (see \cite[Theorem 5.19]{Kenig-Pipher}, I have not
been able to find an easy argument in the literature and thus I provide a detailed proof based on Dahlberg’s theorem on harmonic measure \cite{Dah}.

\begin{theorem}\label{teoap1}
Let $\Omega\subset\R^n$ be a Lipschitz domain, let $B$ be an open ball centered in $\partial\Omega$,
and let $\Sigma = B\cap \partial\Omega$ be a Lipschitz graph.
Let $u$ be a function harmonic in $\Omega$ and continuous in $\overline\Omega$. Suppose that $u$  vanishes in $\Sigma$. Then, for any $a\in(0,1)$, $(\nabla u)_{+,a}$ exists $\sigma$-a.e.\
and belongs to $L^2_{loc}(\sigma|_\Sigma)$. Further, $(\nabla u)_{+,a}$
 has vanishing tangential component. That is,  $(\nabla u)_{+,a}
 = (\partial_\nu u)\,\nu$. 
 Further, assuming that $\Omega\cap B$ is above $\Sigma$, 
$$
\lim_{\ve\to0} \nabla u(\cdot+\ve\, e_n) \to (\partial_\nu u)\,\nu \quad \mbox{ in $L^2_{loc}(\sigma|_\Sigma).$}
$$ 
 Also, in the sense of distributions,
$$(\Delta u)|_B = -\partial_\nu u\,\,\sigma|_\Sigma.$$
\end{theorem}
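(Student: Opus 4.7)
The plan is to reduce to a bounded Lipschitz subdomain on which the $L^2$-regularity of harmonic functions applies (via Dahlberg's theorem plus a Rellich identity), and then derive the remaining assertions by elementary arguments.

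For any compact $K \subset \Sigma$ I would construct a bounded Lipschitz subdomain $\Omega^\sharp \subset \Omega$ whose boundary coincides with $\Sigma$ near $K$ and is otherwise a smooth cap contained in $\Omega$ (for example, by intersecting $\Omega$ with a Lipschitz cylinder over a neighborhood of $K$ and capping it above by a sphere sitting inside $\Omega$). Then $u$ is harmonic on $\Omega^\sharp$ with boundary values $f = u|_{\partial\Omega^\sharp}$ that are continuous, identically zero on $\partial \Omega^\sharp \cap \Sigma$, and smooth on the cap (by interior regularity of $u$); in particular $f \in W^{1,2}(\partial\Omega^\sharp)$ with $\nabla_T f = 0$ $\sigma$-a.e.\ on $\partial\Omega^\sharp \cap \Sigma$. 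Combining Dahlberg's $L^2$-Dirichlet bound with a Rellich-Necas identity on $\Omega^\sharp$ paired with a transverse vector field yields the classical $L^2$-regularity output: (i) the non-tangential maximal function $N(\nabla u)$ lies in $L^2(\sigma|_{\partial\Omega^\sharp})$, (ii) $\nabla u$ admits non-tangential limits $\sigma$-a.e.\ on $\partial\Omega^\sharp$, and (iii) the tangential component of the trace coincides with $\nabla_T f$. Specializing to $K$ where $\nabla_T f = 0$ gives $(\nabla u)_{+,a} = (\partial_\nu u)\,\nu$ $\sigma$-a.e.\ on $K$, with the $L^2_{loc}$-bound; exhausting $\Sigma$ by such compacts delivers the first claims of the theorem, and independence of the aperture $a \in (0,1)$ follows from standard comparability of cones combined with the interior smoothness of $u$.

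For the $L^2_{loc}$-convergence of $\nabla u(\cdot + \ve e_n)$: since $\Sigma$ is a Lipschitz graph and $\Omega \cap B$ lies above $\Sigma$, the outer normal $\nu$ to $\Sigma$ has a uniformly negative $e_n$-component, so $x + \ve e_n \in X_a^+(x)$ for some fixed $a \in (0,1)$ depending on the Lipschitz constant, uniformly in $x \in \Sigma$ and small $\ve > 0$. Hence $|\nabla u(x + \ve e_n)| \leq N_a(\nabla u)(x) \in L^2_{loc}(\sigma|_\Sigma)$ by (i), while $\nabla u(x + \ve e_n) \to (\partial_\nu u)(x)\,\nu(x)$ $\sigma$-a.e.\ by (ii)--(iii); dominated convergence then gives the $L^2_{loc}$-limit. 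The distributional formula follows by taking $\varphi \in C_c^\infty(B)$ and using Green's theorem on $\Omega_\ve \cap B$, where $u$ is smooth in a neighborhood, to write
\begin{align*}
\int_{\Omega_\ve \cap B} u\,\Delta \varphi\,dm
= \int_{\Sigma_\ve \cap B} \bigl( u\,\partial_\nu \varphi - \varphi\,\partial_\nu u \bigr)\,d\sigma
\end{align*}
(using $\Delta u = 0$ in $\Omega$ and $\varphi = 0$ near $\partial B$). Letting $\ve \to 0$: the left side converges to $\int u\,\Delta\varphi\,dm = \langle \Delta u, \varphi\rangle$ (with $u$ extended by $0$ outside $\overline\Omega$) by dominated convergence; the first boundary term vanishes since $u(\cdot + \ve e_n) \to 0$ uniformly on compacts of $\Sigma$; and the second tends to $\int_\Sigma \varphi\,\partial_\nu u\,d\sigma$ by the $L^2_{loc}$-convergence just established. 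Thus $(\Delta u)|_B = -\partial_\nu u\,\sigma|_\Sigma$ distributionally.

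The hard part is the $L^2$-regularity output (i)--(iii), which is the classical Rellich-Dahlberg-Verchota machinery: Dahlberg's theorem provides $L^2$-solvability of the Dirichlet problem and, combined with a Rellich identity on $\Omega^\sharp$, yields the regularity estimate and the identification of the tangential trace as $\nabla_T f$. A subtle point is the ``seam'' of $\partial\Omega^\sharp$ where the Lipschitz and smooth parts meet; since this seam lies on $\Sigma$ and $u$ vanishes there, $f$ is continuous and in $W^{1,2}(\partial\Omega^\sharp)$ globally, so the machinery applies without modification.
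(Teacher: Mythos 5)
Your approach is a genuine alternative to the paper's. The paper works entirely on the given domain: it first shows via the boundary Harnack principle that $\Delta u|_B$ (for $u$ extended by $0$) is a signed measure dominated by harmonic measure $\omega$ on $\Sigma$, then invokes Dahlberg's $B_2$ estimate to get $\Delta u|_B = h\,\sigma|_\Sigma$ with $h\in L^2_{loc}(\sigma)$, and finally recovers $\nabla u$ from the single--layer potential $\EE*\Delta(\varphi u)$ and applies the jump formulas and $L^2$-boundedness for the Riesz transform on Lipschitz graphs. You instead carve out a bounded Lipschitz subdomain $\Omega^\sharp$ and invoke the $L^2$-regularity theory (Dahlberg--Jerison--Kenig--Verchota, via a Rellich identity) for the Dirichlet problem with $W^{1,2}$ data. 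Both ultimately rest on Dahlberg's theorem, but your route off-loads the Rellich/Riesz-transform work onto an existing black box. Your treatment of the $L^2_{loc}$ convergence of $\nabla u(\cdot+\ve e_n)$ via the non-tangential maximal function and dominated convergence is actually cleaner than what the paper indicates (it just says "by analogous arguments").

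The genuine gap is the claim that $f=u|_{\partial\Omega^\sharp}$ lies in $W^{1,2}(\partial\Omega^\sharp)$ "without modification" because $u$ vanishes continuously at the seam. Continuity at the seam gives $f\in C(\partial\Omega^\sharp)$ and rules out a jump, but it does \emph{not} control the tangential gradient of $f$ on the cap near the seam. On the cap (which is a surface transverse to $\Sigma$ approaching it at the seam), $\nabla u$ is only known to be smooth pointwise in the interior; a priori it could blow up like $\dist(\cdot,\Sigma)^{-\beta}$ with $\beta$ too large for $L^2(\sigma|_{\rm cap})$, since the hypothesis is only continuity of $u$ up to the boundary. In other words, the very control you are trying to prove ($\nabla u$ has $L^2$ boundary behaviour) is being implicitly assumed when you declare $f\in W^{1,2}(\partial\Omega^\sharp)$. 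The gap is repairable, but not for the reason you give: since $u^\pm$ are bounded subharmonic in $B$, a truncation plus Caccioppoli argument (used in the paper after Lemma \ref{lem-hxr}) gives $u\in W^{1,2}_{loc}(B)$, and then a Fubini/coarea slicing in the cap parameter lets one choose the cap (e.g., the radius of the defining ball or cylinder) so that $\nabla u\in L^2(\sigma|_{\rm cap})$ for that particular cap. With this sliding argument in place, $f\in W^{1,2}(\partial\Omega^\sharp)$ is legitimate and the rest of your argument goes through. Alternatively, one could prove the required control on the cap via the boundary Harnack principle plus Dahlberg, but that is essentially reproducing the paper's computation.
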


\begin{proof}
We extend $u$ by $0$ out of $\Omega$ and denote $u^+=\max(u,0)$, $u^-=-\min(u,0)$, so that $u^+$ and $u^-$ are continuous and subharmonic in $B$. This implies that, in the sense of distributions, in $B$, $\Delta u = \Delta u^+ - \Delta u^-$ is a signed Radon measure
supported on $\Sigma$.

First we claim that
\begin{equation}\label{eqclaimap}
(\Delta u)|_B = \rho\,\omega|_\Sigma,
\end{equation}
where $\rho\in L^\infty_{loc}(\Sigma)$ and $\omega$ is the harmonic measure for $\Omega$ with respect to some fix pole $p\in\Omega$. To prove the lemma we may assume $B$ small enough 
so that $\Omega\setminus 2B\neq\varnothing$ and that $p\in\Omega\setminus 2B$.
To prove the claim, let $B'$ be an open ball concentric with $B$ such that 
$\overline {B'}\subset B$. We will show that that there exists some constant $C_2$ depending on $B'$ and $p$ such that for any compact set $K\subset \Sigma$, it holds
\begin{equation}\label{eqclaimap2}
\big|\langle \Delta u,\chi_K\rangle \big|\leq C_2\,\omega(K).
\end{equation}
By duality, this implies \rf{eqclaimap}.

 Given an arbitrary $\ve\in \big(0,\frac12\dist(K,\R^n\setminus B')\big)$,
let $\{Q_i\}_{i\in I}$ be a lattice of cubes which cover $\R^n$, with diameter equal to $\ve/2$. Let 
$\{\vphi_i\}_{i\in I}$ be a partition of unity of $\R^n$, so that each $\vphi_i$ is supported
in $2Q_i$ and $\|\nabla^j\vphi_i\|_\infty\lesssim \ell(Q_i)^{-j}$, for $j=0,1,2$.
Then we have
\begin{equation}\label{eqap2}
\langle \Delta u,\chi_K\rangle = \Big\langle \Delta u,\,\sum_{i\in I'} \vphi_i\Big\rangle - 
\Big\langle \Delta u,\,\sum_{i\in I'} \vphi_i -\chi_K\Big\rangle,
\end{equation}
where $I'$ is the collection of indices $i\in I$ such that $2Q_i\cap K\neq\varnothing$.
Since $(\Delta u)|_B$ is a signed Radon measure,
$$\Big|\Big\langle \Delta u,\,\sum_{i\in I'} \vphi_i -\chi_K\Big\rangle\Big|
\leq \big\langle |\Delta u|,\, \chi_{U_\ve(K)\setminus K}\big\rangle\to 0\quad
\mbox{ as $\ve\to0$,}$$
where $U_\ve(K)$ is the $\ve$-neighborhood of $K$.
Concerning the other term in \rf{eqap2}, we have
$$\Big|\Big\langle \Delta u,\,\sum_{i\in I'} \vphi_i\Big\rangle\Big| \leq \sum_{i\in I'}
\Big|\Big\langle  u,\, \Delta\vphi_i\Big\rangle\Big| \lesssim \sum_{i\in I'}\frac1{\ell(Q_i)^2}\,\int_{2Q_i}
|u|\,dm.$$
Since $|u|$ is subharmonic and continuous in $B$ and it vanishes $B\setminus\Omega$, by the
boundary Harnack principle (see Theorem 5.1 from \cite{JK}, for example), we have
$$|u(x)| \leq C_3\,g(x,p)\quad \mbox{ for all $x\in B'\cap\Omega$,}$$
where $g(\cdot,\cdot)$ is the Green function of $\Omega$ and $C_3$ depends on $u$, $p$, and $B'$, but not on $K$.
Thus, 
$$\Big|\Big\langle \Delta u,\,\sum_{i\in I'} \vphi_i\Big\rangle\Big| \lesssim \sum_{i\in I'}
\frac1{\ell(Q_i)^2}\,\int_{2Q_i} g(x,p)\,dx,$$
with the implicit constant depending on $u$, $p$, and $B'$. By standard estimates for
harmonic measure (see (4.3) and (4.4) from \cite{JK}, for example), we have
$$g(x,p)\lesssim \frac{\omega(4Q_i)}{\ell(Q_i)^{n-2}} \quad \mbox{ for all $x\in 2Q_i\cap \Omega$, $i\in I$.}$$
Therefore,
$$\Big|\Big\langle \Delta u,\,\sum_{i\in I'} \vphi_i\Big\rangle\Big| \lesssim 
\sum_{i\in I'}\omega(4Q_i) \leq \omega(U_{4\ve}(K)).
$$
Letting $\ve\to 0$, we have $\omega(U_{4\ve}(K))\to\omega(K)$ and thus \rf{eqclaimap2} follows, 
which implies the claim \rf{eqclaimap} .

Next recall that by Dahlberg's theorem, harmonic measure on a Lipschitz domain $\Omega$ is a $B_2$ weight with respect to the surface measure $\sigma$. In particular, 
the density function $\frac{d\omega}{d\sigma}$ belongs to $L^2_{loc}(\sigma)$. Therefore,
in the sense of distributions,
$$(\Delta u)|_B = h\,\sigma|_\Sigma,\quad \mbox{ for some $h\in L^2_{loc}(\sigma)$.}$$

Our next objective consists in showing that
$(\nabla u)_{+,a}$ exists $\sigma$-a.e.\ and moreover $(\nabla u)_{+,a} = (\partial_\nu u)\,\nu \in L^2_{loc}(\sigma|_\Sigma)$. To this end, consider an arbitrary open ball $\wt B$ centered in $\Sigma$ such that
$4\wt B\subset B$. Let $\vphi$ be a $C^\infty$ function which equals $1$ on $2\wt B$ and vanishes out of $3\wt B$, and let $v=\vphi\, u$. Observe that
\begin{equation}\label{eqvx}
v = \EE * \Delta(\vphi\,v) = \EE * \big(\vphi\,\Delta u + u\,\Delta\vphi + 2\,\nabla u \cdot \nabla \vphi\big),
\end{equation}
where $\EE$ is the fundamental solution of the Laplacian. Remark also that 
$\nabla u\in L^2_{loc}(B)$, by Caccioppoli's inequality.
 
For a finite Borel measure $\eta$, let $R\eta$ be the $(n-1)$-dimensional Riesz transform of
$\eta$. That is,
$$R\eta(x) = \int \frac{x-y}{|x-y|^n}\,d\eta(y),$$
whenever the integral makes sense.
From the identity \rf{eqvx}, we deduce that, for all $x\not \in\Sigma$,
$$\nabla v(x) = c_n\,\big(R(\vphi\,g\,\sigma|_\Sigma)(x) + R(u\,\Delta\vphi\,m)(x) + 2\,R(\nabla u \cdot \nabla \vphi \,m\big)(x)\big)$$
(recall that $m$ is the Lebesgue measure in $\R^n$). Observe that 
$R(u\,\Delta\vphi\,m)$ and 
$R(\nabla u \cdot \nabla \vphi \,m\big)$ are continuous functions in $\wt B$.
On the other hand, the non-tangential limit $(R(\vphi\,g\,\sigma|_\Sigma))_{\pm,a}(x)$ exists for $\sigma$-a.e.\ $x\in\partial\Omega$, by the classical jump formulas for the Riesz transforms
(see \cite{Tolsa-jumps}, for example). 
Taking also into account that $\nabla v=\nabla u$ in $\wt B$, it follows then that
$(\nabla u)_{\pm,a}(x)$  exists for $\sigma$-a.e.\ $x\in\Sigma\cap \wt B$. By the $L^2(\sigma)$
boundedness of the (principal value) Riesz transform operator $R(\cdot\,\sigma)$ on Lipschitz graphs, we deduce that 
$(\nabla u)_{\pm,a}\in L^2(\sigma|_{\Sigma\cap\wt B})$. 

Since $u\equiv0$ in
$\Omega^c$, it is clear that $(\nabla u)_{-,a}\equiv0$ in $\Sigma\cap\wt B$. As the tangential component of $R(\vphi\,g\,\sigma|_\Sigma)(x)$ is continuous across $\partial\Omega$ for $\sigma$-a.e.\ $x\in\partial\Omega$, again by the jump formulas for the Riesz transforms, we deduce that the tangential component of $(\nabla u)_{+,a}$ coincides with
the tangential component of $(\nabla u)_{-,a}$ $\sigma$-a.e.\ in $\Sigma\cap\wt B$, and thus
$(\nabla u)_{+,a}\equiv0$ in $\Sigma\cap\wt B$, which is equivalent to saying that
$(\nabla u)_{+,a}
 = (\partial_\nu u)\,\nu$ in $\Sigma\cap\wt B$.

It remains to prove that $(\Delta u)|_B = -\partial_\nu u\,\,\sigma|_\Sigma$ in the sense of distributions. To this end, let $\psi$ be a $C^\infty$ function supported in $\wt B$.
Without loss of generality we may assume that $\Sigma\cap \wt B$ is a Lipschitz graph with respect the horizontal axes and that $\Omega\cap \wt B$ lies above $\Sigma\cap\wt B$. For $0<\ve\ll r(\wt B)$, denote $\Sigma_\ve = \Sigma + \ve\,e_n$
and $\Omega_\ve = \Omega + \ve\,e_n$,
where $e_n=(0,\ldots,0,1)$. Then we have
\begin{align}\label{eqrt943}
\langle \Delta u,\,\psi\rangle &= \int u\,\Delta\psi\,dm  = \lim_{\ve\to 0} \int_{\wt B \cap\Omega_\ve} u\,\Delta\psi\,dm \\
& = \lim_{\ve\to 0} \int_{\wt B \cap\partial \Omega_\ve} u\,\partial_\nu\psi\,d\sigma
- \lim_{\ve\to 0} \int_{\wt B \cap\partial \Omega_\ve} \psi\,\partial_\nu u\,d\sigma\nonumber\\
& = 0 -  \int_{\Sigma} \psi\,\partial_\nu u\,d\sigma.\nonumber
\end{align}
The last identity follows from the fact that, in a neighborhood of $\Sigma\cap \wt B$, as $\ve\to0$, $u(\cdot\, +\ve\,e_n)$ converges 
uniformly to $0$ and $\nabla u(\cdot \,+ \ve\,e_n)$ converges  to $(\nabla u)_{+,a}$
in $L^2(\sigma|_{\Sigma\cap \wt B})$ (this is proven by arguments analogous to the ones above for the $\sigma$-a.e.\ existence of the limit $(\nabla u)_{+,a}(x)$ in $\Sigma$). From \rf{eqrt943}, we deduce that $\Delta u = -\partial_\nu u\,\,\sigma|_\Sigma$ in $\wt B$, and thus also in $B$.
\end{proof}

\vv

\section{The Whitney cubes}

In this appendix we prove some of the properties of the Whitney cubes constructed at the
beginning of Section \ref{seckey}.

\begin{lemma}\label{lem-whitney}
Let $\Omega\subsetneq\R^n$ be open. Then there exists a family $\WW$ 
of dyadic cubes with disjoint interiors such that
$$\bigcup_{Q\in\WW} Q = \Omega,$$
and moreover there are
 some constants $\Lambda>20$ and $D_0\geq1$ such the following holds for every $Q\in\WW$:
\begin{itemize}
\item[(i)] $10Q \subset \Omega$ and $\diam(Q)< \frac1{20}\,\dist(Q,\partial\Omega)$;
\item[(ii)] $\Lambda Q \cap \partial\Omega \neq \varnothing$;
\item[(iii)] there are at most $D_0$ cubes $Q'\in\WW$
such that $10Q \cap 10Q' \neq \varnothing$. Further, for such cubes $Q'$, we have $\frac12\ell(Q')\leq \ell(Q)\leq 2\ell(Q')$.
\end{itemize}
\end{lemma}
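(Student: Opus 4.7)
The plan is to take the standard Whitney construction, with the defining condition chosen so that the explicit constant $\tfrac{1}{20}$ in (i) is built in. For each dyadic cube $Q$ with $Q\subset\Omega$, call $Q$ \emph{good} if $\diam(Q)<\tfrac{1}{20}\dist(Q,\partial\Omega)$, and let $\WW$ be the family of maximal good dyadic cubes (maximal with respect to inclusion in the dyadic tree). I would first note that the goodness condition is monotone: if $Q\subset Q'$ and $Q'$ is good, then $Q$ is also good, because $\diam(Q)\leq\diam(Q')$ and $\dist(Q,\partial\Omega)\geq\dist(Q',\partial\Omega)$. Moreover, for any fixed $x\in\Omega$, a sufficiently small dyadic cube containing $x$ is clearly good, while sufficiently large ones are not (since any $Q\ni x$ satisfies $\diam(Q)\geq \tfrac{1}{20}\dist(Q,\partial\Omega)$ once $\diam(Q)>\tfrac{1}{20}\dist(x,\partial\Omega)$). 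Hence the chain of good dyadic cubes through $x$ is finite and produces a unique maximal member, so the cubes in $\WW$ have pairwise disjoint interiors and tile $\Omega$, and the second part of (i) holds by construction.

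For the first part of (i), any $y\in 10Q$ satisfies $|y-c_Q|_\infty\leq 5\ell(Q)$, hence $\dist(y,Q)\leq\tfrac{9}{2}\diam(Q)<\tfrac{9}{40}\dist(Q,\partial\Omega)<\dist(Q,\partial\Omega)$, so $y\in\Omega$. For (ii), I would exploit maximality by looking at the dyadic parent $\hat Q$ of $Q\in\WW$: by maximality, $\hat Q$ is not good, so $\diam(\hat Q)\geq\tfrac{1}{20}\dist(\hat Q,\partial\Omega)$. Combined with $\diam(\hat Q)=2\diam(Q)$ and the elementary estimate $\dist(Q,\partial\Omega)\leq\dist(\hat Q,\partial\Omega)+\diam(\hat Q)$, this yields $\dist(Q,\partial\Omega)\leq 42\diam(Q)$. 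Picking any $\Lambda$ larger than an absolute constant (on the order of $84\sqrt{n}+1$) then forces $\Lambda Q\cap\partial\Omega\neq\varnothing$.

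For (iii), the crucial step is comparability of side lengths of neighbors. Suppose $Q,Q'\in\WW$ satisfy $10Q\cap 10Q'\neq\varnothing$; then $\dist(Q,Q')\leq 5\sqrt{n}(\ell(Q)+\ell(Q'))$, and the triangle inequality gives that $\dist(Q,\partial\Omega)$ and $\dist(Q',\partial\Omega)$ differ by at most a bounded multiple of $\max(\ell(Q),\ell(Q'))$. Combining with the two-sided control $\diam(Q)\approx\dist(Q,\partial\Omega)$ and $\diam(Q')\approx\dist(Q',\partial\Omega)$ from (i) and (ii) forces $\ell(Q)/\ell(Q')$ to lie in a bounded range, and since both cubes are dyadic this ratio must be $1$ or $2$. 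The bound on $D_0$ is then immediate, because only $O(1)$ dyadic cubes with side length in $\{\ell(Q)/2,\ell(Q),2\ell(Q)\}$ can meet the fixed bounded region $10Q$.

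The only real difficulty is the bookkeeping of constants in (iii); conceptually the whole argument rests on two points—the maximality of $\WW$ and the uniform two-sided estimate $\diam(Q)\approx\dist(Q,\partial\Omega)$—so there is no genuine obstacle.
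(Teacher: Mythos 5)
Your proof is correct and follows essentially the same route as the paper: take maximal dyadic cubes satisfying a proportionality condition $\diam(Q)\lesssim\dist(Q,\partial\Omega)$, derive the lower bound $\dist(Q,\partial\Omega)\lesssim\diam(Q)$ from maximality via the parent, and use the resulting two-sided comparability plus the triangle inequality for (iii); the only cosmetic difference is that you bake the threshold $1/20$ directly into the selection rule, whereas the paper uses a flexible small constant $c_0$ and avoids pinning down its value. The constant bookkeeping you deferred does in fact close: for $Q,Q'\in\WW$ with $10Q\cap 10Q'\neq\varnothing$ and $\ell(Q)\geq\ell(Q')$, one gets $20\,\diam(Q)\leq\dist(Q,\partial\Omega)\leq\tfrac{9}{2}\diam(Q)+\tfrac{11}{2}\diam(Q')+42\,\diam(Q')$, hence $\diam(Q)\leq\tfrac{95}{31}\diam(Q')<4\,\diam(Q')$, which for dyadic side lengths forces $\ell(Q)\leq 2\ell(Q')$.
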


\begin{proof}
We assume the dyadic cubes to be half open-closed.
Let $c_0\in(0,1/2)$ be some constant to be fixed below. Denote $d(x)  =\dist(x,\partial\Omega)$, and 
let $\WW$ be the family of all dyadic cubes $Q\subset\Omega$ that satisfy
\begin{equation}\label{eqlqdx}
\ell(Q)\leq c_0\,\inf_{x\in Q} d(x)
\end{equation}
and moreover are maximal. It is immediate to check that the cubes from $\WW$ cover
$\Omega$ and are disjoint, because they are maximal.

For all $Q\in\WW$, since $c_0\,d(x)\geq\ell(Q)$ for all $x\in Q$, it follows that $$\dist(Q,\partial
\Omega)\geq c_0^{-1}\ell(Q).$$
Taking $c_0$ small enough, the properties (i) and (ii) follow.

Let $Q,Q'\in\WW$ satisfy $10Q\cap10Q'\neq\varnothing$. Let $Q''$ the dyadic parent of $Q'$, which
is also contained in $\Omega$, by (i). By the definition of $\WW$, there exists $x''\in \overline{Q''}$ such that 
$\ell(Q'')\geq c_0\, d(x'')$. Fix also any $x\in Q$. From the condition $10Q\cap10Q'\neq\varnothing$ it follows that $|x-x''|\leq C\,(\ell(Q)+\ell(Q'))$, where $C$ is some constant depending just on $n$.
Then we have
$$\ell(Q)\leq c_0\,d(x) \leq c_0\,d(x'') + c_0\,|x-x''|\leq 2\,\ell(Q') + c_0\,C(\ell(Q) + \ell(Q')).$$
For $c_0$ small enough we deduce that $\ell(Q)\leq 2.5\,\ell(Q')$, which implies that 
$\ell(Q)\leq 2\,\ell(Q')$ because $\ell(Q)/\ell(Q')=2^k$ for some $k\in\Z$. Reversing the roles of $Q$ and $Q'$ we deduce that $\frac12\ell(Q')\leq \ell(Q)\leq 2\ell(Q')$.
From this property and standard volume considerations it follows easily that there are at most $D_0$ cubes $Q'\in\WW$
such that $10Q \cap 10Q' \neq \varnothing$, with $D_0$ depending just on $n$.
\end{proof}

\vv

\begin{lemma}\label{lemwhit2}
Let $\Omega$ be a Lipschitz domain, and let $\Sigma$, $B_0$, $\Sigma_0$, $H_0$, $\Pi$, and $R_0$ as in 
Section \ref{seckey}.
Also, let $J(R_0)$ be as in \rf{eqjr0} and, for $k\geq1$, let $J_k(R_0)\subset J(R_0)$ be the subfamily of $(n-1)$-dimensional dyadic cubes in $H_0$ with side length equal to $2^{-k}\ell(R_0)$.
Then, for each $Q'\in J_k(R_0)$ there exists some cube $Q\in\WW$ such that $\Pi(Q)=Q'$, $\Pi(Q)\subset \Pi(R_0)$, and such that $Q$ is below $R_0$.
\end{lemma}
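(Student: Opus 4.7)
The plan is to construct, for the given dyadic $(n-1)$-dimensional cube $Q'\subset\Pi(R_0)$ of side $s=2^{-k}\ell(R_0)$, an $n$-dimensional dyadic cube of the form $Q = Q'\times I$ (with $I$ a dyadic interval of length $s$) that belongs to the Whitney family $\WW$ and sits below $R_0$ in the $x_n$-direction, and then to verify each defining property in turn.

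First I would fix coordinates so that $H_0 = \R^{n-1}\times\{0\}$, $\Sigma$ is the graph $x_n = g(x')$ of a $\tau_0$-Lipschitz function $g$ with $\Omega$ lying above the graph, and $R_0 = \Pi(R_0)\times[t_0,\,t_0+\ell(R_0)]$ with $t_0$ a multiple of $\ell(R_0)$. The key preliminary observation is that, since $R_0\in\WW$, $\inf_{x\in R_0} d(x,\partial\Omega)\geq \ell(R_0)/c_0$; and since the closest point of $\Sigma$ from $(x',t_0)$ is at distance at most $t_0-g(x')$, this forces
$$\sup_{x'\in\Pi(R_0)} g(x') \,\leq\, t_0 - \ell(R_0)/c_0,$$
so that the graph lies well below the bottom of $R_0$.

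Writing $h^+:=\sup_{x'\in Q'} g(x')$, I would then let $a$ be the smallest integer multiple of $s$ with $a\geq h^+ + s(1+\tau_0)/c_0$, and set $Q := Q'\times[a,\,a+s]$. Verifying $Q\in\WW$ splits into three checks: (i) $Q\subset\Omega$, which holds because $a>h^+$ forces the base of $Q$ to lie strictly above the graph over $Q'$; (ii) the Whitney size condition $\ell(Q)\leq c_0\inf_{y\in Q} d(y,\partial\Omega)$, which uses the standard bound $d((x',x_n),\Sigma)\geq (x_n-g(x'))/(1+\tau_0)$ valid for any $\tau_0$-Lipschitz graph, together with the choice of $a$; and (iii) maximality, obtained from the minimality $a<h^+ + s(1+\tau_0)/c_0 + s$: the dyadic parent $P$ of $Q$ (of side $2s$, with $\Pi(P)$ the dyadic parent of $Q'$ in $H_0$) satisfies $\inf_{y\in P} d(y,\partial\Omega) < s\big((1+\tau_0)/c_0+1\big) + C\tau_0 s < 2s/c_0$ for $c_0<1$ and $\tau_0$ small, so that $\ell(P)>c_0\inf_{y\in P} d$ and $P$ fails the Whitney size condition.

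Finally, to confirm that $Q$ lies below $R_0$ I would check $a+s\leq t_0$: combining the bounds yields
$$a+s \,<\, h^+ + s\big((1+\tau_0)/c_0 + 2\big) \,\leq\, t_0 - \ell(R_0)/c_0 + s\big((1+\tau_0)/c_0+2\big),$$
which is at most $t_0$ precisely when $(1+\tau_0)+2c_0\leq 2^k$, an inequality that holds for any $k\geq 1$ provided $c_0<1/2$ and $\tau_0$ is small enough. The main technical obstacle will be bookkeeping the $O(\tau_0)$ slope corrections entering both the distance-to-$\Sigma$ estimate and the comparison of $\sup_{Q'} g$ with its analogue on the dyadic parent of $Q'$; taking $\tau_0$ small enough relative to $c_0$ absorbs all these corrections into the clean flat-graph calculation sketched above.
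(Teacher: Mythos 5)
Your construction is correct, but it takes a genuinely different route from the paper's. You build the Whitney cube explicitly: you pick the dyadic height $a$ just barely clearing the graph over $Q'$ by the margin $s(1+\tau_0)/c_0$ and verify the three defining properties of $\WW$ (containment, the size condition $\ell(Q)\leq c_0\inf_Q d$, and maximality via failure of the size condition for the dyadic parent). The paper instead uses a discrete intermediate-value argument: it considers the vertical segment from the bottom face of $R_0$ down to $\Sigma$ through the center $x'$ of $Q'$, orders the chain of Whitney cubes meeting this segment, observes that their side lengths tend to $0$ while consecutive ratios are in $\{1/2,1,2\}$ (property (iii) of Lemma~\ref{lem-whitney}), and extracts the first cube whose side length drops to $2^{-k}\ell(R_0)$; its projection must coincide with $Q'$ because both are dyadic $(n-1)$-cubes of the same side length through $x'$. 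The two proofs buy different things: the paper's chain argument is parameter-free and works for any Lipschitz graph without appealing to $\tau_0$ being small relative to $c_0$, whereas your direct construction requires the slope/$c_0$ interplay you flagged (e.g.\ $\tau_0+2c_0\leq 1$), which is harmless here since the ambient hypotheses already take $\tau_0$ small, but it is an extra restriction the paper's proof does not need. One small remark on your maximality step: depending on the parity of $a/s$ the dyadic parent $P$ has bottom either at $a$ or at $a-s$, and your estimate should be read as covering both cases; in each case the point $(x_0',\text{bottom of }P)$ with $g(x_0')$ near $\sup_{Q'}g$ yields $\inf_P d < s\bigl((1+\tau_0)/c_0 + 1\bigr)$, which is indeed $<2s/c_0$ under your constraints, so the argument closes.
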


\begin{proof}
Let $Q'\in J_k(R_0)$ and denote by $x'$ its center, so that $x'\in H_0\cap\Pi(R_0)$ too.
Let $L_{x'}$ be the line orthogonal to $H_0$ through $x'$. Let $x$ be intersection of $L_{x'}$ with the lower face of $R_0$, and let $x''=L_{x'}\cap \Sigma_0$. 
Let $S$ be the segment $(x,x'')$, which lies on $L_{x'}$.

Consider the sequence of dyadic Whitney cubes $\{R_j\}_{j\geq1}\subset \WW$ which intersect $S$,
so that 
$$S= L_{x'}\cap\bigcup_{j\geq 1} R_j,$$
and assume that the sequence is ordered in such a way that, for all $j\geq0$, $R_j$ and $R_{j+1}$ are neighbors and $R_{j+1}$ is below $R_j$.
The sequence of side lengths $\ell(R_j)$ tends to $0$ as $j\to\infty$ because
$\dist(R_j,\partial\Omega)\to0$ as $j\to\infty$. Also, for any $j\geq0$, $\ell(R_j) / \ell(R_{j+1})$ 
equals $1$, $1/2$, or $2$, by the property (iii) in Lemma \ref{lem-whitney}.
This implies that, for some $j\geq1$, $\ell(R_j)=2^{-k}\ell(R_0)$. Indeed, we claim that
 the cube $R_j$ such that $\ell(R_j) \leq 2^{-k} \ell(R_0)$ and $j$ is minimal does the job. To check this, notice that, by the minimality of $j$, $\ell(R_{j-1}) \geq 2^{-k+1} \ell(R_0)$. So the property (iii) in Lemma \ref{lem-whitney}
implies that $\ell(R_j)\geq  2^{-k} \ell(R_0)$ and the claim follows.

Notice now that $\Pi(R_j)=Q'$, because both $\Pi(R_j)$ and $Q'$ are $(n-1)$-dimensional dyadic cubes in $H_0$ with side length $2^{-k}\ell(R_0)$ and both contain $x'$.
\end{proof}
\vv


\section{An alternative proof of Lemma \ref{lemAE}}\label{app3}

We assume that we are under the assumptions of Lemma \ref{lemAE}. So given a Lipschitz domain
 $D\subset\R^n$ and a relatively open subset $V$ of $\partial D$,  we consider 
  a non-zero function $v$ which is harmonic in $D$ and continuous in $\overline D$, vanishing identically
in $V$, and whose normal derivative $\partial_\nu v$ also vanishes in a subset $E\subset V$ of positive
surface measure. We have to show that, for every point $x\in V$ which is a density point of $E$ (with respect to surface measure),
it holds
\begin{equation}\label{eqnodob'}
\lim_{r\to 0} \frac{\int_{B(x,r)\cap D} |v|\,dm}{\int_{B(x,6r)\cap D} |v|\,dm} = 0.
\end{equation}
To this end, for such point $x$, given $\ve\in (0,1)$, let $r_0>0$ be small enough so that $B(x,r_0)\subset D$, 
$B(x,r_0)\cap \partial D\subset V$, and
$$\sigma(\partial D\cap B(x,r)\setminus E)\leq \ve\,\sigma(\partial D\cap B(x,r))\quad \mbox{ for all $0<r\leq r_0$.}$$
We fix $r\in (0,r_0/3)$.
Without loss of generality, we assume that $x=0$ and we denote $B_r=B(0,r)$, We also assume
that $\partial D\cap B_{2r}$ is a Lipschitz graph with respect to the horizontal axes, and that $D$ is above the graph. As usual, we understand that $v$ has been extended by $0$ in $D^c$.
As shown in Theorem \ref{teoap1},
$$(\Delta v)|_{B_r} = -\partial_\nu v\,\,\sigma|_{\partial D \cap B_r} =:\mu.$$
Thus, 
$g:=v - \EE *\mu$ is harmonic in $B_r.$ 

We intend to apply the three ball inequality to the function $g$. In order to do
this, first we need to estimate the total variation of the signed measure $\mu$.  We apply the Rellich-Necas identity 
$$2(\beta \cdot \nabla v)\,\Delta v = 2\,{\rm div}((\beta\cdot\nabla v)\,\nabla v) -
{\rm div}(\beta\,|\nabla v|^2) + |\nabla v|^2\,{\rm div}\beta  - 2 \sum_{i,j}\partial_i\beta_j\,
\partial_i v\,\partial_j v,$$
with a vector field $\beta = \vphi\,e_n$, where $\vphi$ is a smooth function supported on
$B_{\frac32r}$ and identically $1$ on $B_r$. Integrating the above identity in $B_{2r}\cap D$ with 
respect to Lebesgue measure and applying the divergence theorem, we obtain
\begin{align*}
0= & \int_{\partial D\cap B_{2r}} \big[2\,\vphi(y)\, \partial_n v(y)\,\partial_\nu v(y) - 
\vphi(y) \,|\nabla v(y)|^2 \,(e_n\cdot \nu(y)) \big]\,d\sigma(y)\\
&\quad + \int_{B_{2r}}\partial_n\vphi\,|\nabla v|^2\,dy - 2 \int_{B_{2r}}\sum_{i}\partial_i\vphi_n\,
\partial_i v\,\partial_n v\,dy.
\end{align*}
In fact, to be more precise, since $v$ need not be smooth up to $\partial D$, first we apply
the divergence theorem in the domain $B_{2r}\cap (\delta e_n + D)$, with $\delta>0$, and then we let
$\delta\to 0$ as in the proof of Lemma \ref{lemderiv}.
Taking into account that $\nabla v= (\partial_\nu v)\nu$ on $\partial D\cap B_{2r}$, we get
$$\int_{\partial D\cap B_{2r}}
\vphi(y) \,|\partial_\nu v(y)|^2 \,(e_n\cdot \nu(y)) \,d\sigma(y) =
 -\int_{B_{2r}}\!\partial_n\vphi\,|\nabla v|^2\,dy + 2 \int_{B_{2r}}\sum_{i}\partial_i\vphi_n\,
\partial_i v\,\partial_n v\,dy.$$
Thus, recalling that $\chi_{B_r}\leq \vphi\leq \chi_{B_{\frac32 r}}$ and applying Caccioppoli's
inequality,
$$\int_{\partial D\cap B_{r}}
|\partial_\nu v(y)|^2 \,d\sigma(y) \lesssim \frac1r \int_{B_{\frac32r}}|\nabla v|^2\,dy
\lesssim \frac1{r^3} \int_{B_{2r}}|v|^2\,dy
.$$
Now, by Cauchy-Schwarz, we derive
\begin{align*}
\|\mu\|  = \int_{\partial D\cap B_{r}}
|\partial_\nu v(y)| \,d\sigma(y) & \leq 
\left(\int_{\partial D\cap B_{r}}
|\partial_\nu v(y)|^2 \,d\sigma(y)\right)^{1/2}
\sigma(B_r\cap \partial D\setminus E)^{1/2}\\
& \lesssim \ve^{\frac12}\, r^{\frac n2 -2}  \left(\int_{B_{2r}}|v|^2\,dy\right)^{1/2}.
\end{align*}

Let $x'=-\frac {r}{10}\,e_n$. Then there is some $c_2>0$ depending just on
 the Lipschitz character of $D$ such that
 $\overline{B(x',2c_2r)}\subset B_r\setminus \overline{D}$. We denote $r'=c_2r$.
  Observe now that for any $s\in[r',r]$,
\begin{equation}\label{eqws744}
\int_{\partial B(x',s)} |\EE * \mu| \,d\sigma \lesssim \int \!\!\int_{\partial B(x',s)}
\frac1{|y-z|^{n-2}}\,d\sigma(y)\,d|\mu|(z) \lesssim \|\mu\|\,s.
\end{equation}
Thus, the function\footnote{In the case $n=2$ it is better to choose 
$w= \EE * \mu - \frac1{2\pi}\, \mu(\R^2)\,\log r = \frac1{2\pi}\,\log\frac{|x|}r * \mu$, and then
\rf{eqws744} and \rf{eqws745} follow analogously.}
 $w:=\EE*\mu$ satisfies
\begin{equation}\label{eqws745}
\avint_{\partial B(x',s)}|w|\,d\sigma \lesssim \frac1{r^{n-2}}\,\|\mu\|
\lesssim \ve^{1/2}\, \frac1{r^{n/2}} \left(\int_{B_{2r}}|v|^2\,dy\right)^{1/2}
\approx  \ve^{1/2}\,  \left(\avint_{B_{2r}}|v|^2\,dy\right)^{1/2}
,
\end{equation}
with the implicit constants, from now on, possibly depending on $c_2$ and thus on the Lipschitz character of $D$.

Since $g=w$ in $\overline{B(x',2r')}$, and $g$ is harmonic in $B(x',2r')$ and continuous in
$\overline{B(x',2r')}$, we deduce that, for all $z\in 
\overline{B(x',r')}$, 
$$|g(z)|\lesssim \avint_{\partial B(x',2r')}|g|\,d\sigma 
\lesssim \ve^{1/2} \left(\avint_{B_{2r}}|v|^2\,dy\right)^{1/2}.
$$
Hence,
$$h_g(x',r') := \avint_{\partial B(x',r')}|g|^2\,d\sigma \lesssim \ve\avint_{B_{2r}}|v|^2\,dy.
$$

Next we estimate $h_g(x',\frac34 r)$. We write
$$\avint_{\partial B(x',\frac45r)}|g|\,d\sigma \leq \avint_{\partial B(x',\frac45r)}|v|\,d\sigma
+ \avint_{\partial B(x',\frac45r)}|w|\,d\sigma.$$
Since $|v|$ is subharmonic in $B_{2r}$ and continuous in $\overline{B_{2r}}$, we have
$$|v(z)| \lesssim \avint_{B_{2r}} |v|\,dy\quad \mbox{ for all $y\in B_r$.}$$
Therefore,
$$\avint_{\partial B(x',\frac45r)}|g|\,d\sigma \lesssim \avint_{B_{2r}} |v|\,dy + \ve^{1/2} \left(\avint_{B_{2r}}|v|^2\,dy\right)^{1/2} \lesssim \left(\avint_{B_{2r}}|v|^2\,dy\right)^{1/2}.$$
Since $g$ is harmonic in $B(x',\frac45 r)$ and continuous in
$\overline{B(x',\frac45r)}$, we have, for all $z\in 
\overline{B(x',\frac34 r)}$,
$$|g(z)|\lesssim
\avint_{\partial B(x',\frac45r)}|g|\,d\sigma \lesssim \left(\avint_{B_{2r}}|v|^2\,dy\right)^{1/2},$$
and so 
$$h_g(x',\tfrac34r) \lesssim
\avint_{B_{2r}}|v|^2\,dy.$$

Recall now that, by the three ball inequality, given $\alpha\in (0,1)$ and  $r_2>r_1>0$ such that
$\overline{B(x',r_2)}\subset B_r$, it holds
$$h_g(x',r_1^\alpha r_2^{1-\alpha}) \leq h_g(x',r_1)^\alpha \,h_g(x',r_2)^{1-\alpha}.$$
In fact, this inequality is an immediate consequence of the convexity of the function $\log h_g(x',e^t)$,
proven in Lemma \ref{lemderiv} in a more general situation. Applying this inequality with
$r_1=r'$, $r_2=\frac34r$, and $\alpha$ such that
$$(r')^\alpha \,(\tfrac34r)^{1-\alpha} = \tfrac23 r,$$
i.e., 
$$\alpha = \frac{\log\frac98}{\log\frac 3{4c_2}},$$
we infer that
$$h_g(x',\tfrac23r) \lesssim
\ve^\alpha \avint_{B_{2r}}|v|^2\,dy.$$
Hence, using Cauchy-Schwarz and again \rf{eqws745},
\begin{align*}
\avint_{\partial B(x',\frac23r)} |v| \,d\sigma & \leq 
\avint_{\partial B(x',\frac23r)} |g| \,d\sigma + 
\avint_{\partial B(x',\frac23r)} |w| \,d\sigma \\
& \lesssim h_g(x',\tfrac23r)^{1/2} +
 \ve^{1/2}\,  \left(\avint_{B_{2r}}|v|^2\,dy\right)^{1/2} \lesssim \ve^{\alpha/2}\,  \left(\avint_{B_{2r}}|v|^2\,dy\right)^{1/2}.
 \end{align*}
Since $|v|$ is subharmonic in $B_{3r}$ and continuous in $\overline{B_{3r}}$ and 
$\overline{B_{\frac12 r}}\subset B(x',\frac23 r)$, we get
$$\avint_{B_{\frac12 r}} |v|\,dy\lesssim \avint_{\partial B(x',\frac23r)} |v| \,d\sigma 
\lesssim \ve^{\alpha/2}\,  \left(\avint_{B_{2r}}|v|^2\,dy\right)^{1/2}
\lesssim \ve^{\alpha/2}\,  \avint_{B_{3r}}|v|\,dy.$$
So we have shown that, for any $\ve>0$, if $r$ is small enough
$$\frac{\avint_{B_{\frac12 r}} |v|\,dy}{ \avint_{B_{3r}}|v|\,dy}\lesssim \ve^{\alpha/2},$$
which proves the lemma.
\fiproof


\vvv

\end{document}